\date{\today}
\newtheorem{thm}{Theorem}[section]
\newtheorem{cor}[thm]{Corollary}
\newtheorem{lem}[thm]{Lemma}
\newtheorem{prop}[thm]{Proposition}
\theoremstyle{definition}
\theoremstyle{remark}
\newtheorem{rem}[thm]{Remark}
\numberwithin{equation}{section}
\newcommand{\R}{\mathbb R}
\newcommand{\He}{\mathbb H}
\newcommand{\C}{{\mathbb C}}
\newcommand{\Fs}{\mathcal{F}^\lambda(\C^{2n})}
\renewcommand{\Re}{\operatorname{Re}}
\renewcommand{\Im}{\operatorname{Im}}
\newcommand{\tr}{\operatorname{tr}}
\title[Modulation spaces on the  Heisenberg group]
{Modulation spaces on the Heisenberg group}
\author[  Biswas and Thangavelu]{  Md Hasan Ali Biswas and Sundaram Thangavelu}
\address[] {Department of Mathematics, Indian Institute of Science, Bangalore--560012, India.}
\email{mdhasanalibiswas4@gmail.com,\, veluma@iisc.ac.in}
\begin{document}

\maketitle

\vskip0.25in

\begin{abstract} In this article we show how certain irreducible unitary representation $ \Pi_\lambda $ of the twisted  Heisenberg group $ \He_\lambda^n(\C)$ leads to the  twisted modulation spaces $ M_\lambda^{p,q}(\R^{2n}).$  These $ \Pi_\lambda $ also turn out to be  irreducible unitary representations of another nilpotent Lie group $ G_n $ which contains two copies of the Heisenberg group $ \He^n.$ By lifting $ \Pi_\lambda $ we obtain another unitary  representation  $ \Pi $ of $ G_n $ acting on $ L^2(\He^n).$ We define our modulation spaces $ M^{p,q}(\He^n) $ in terms of the matrix coefficients associated to $ \Pi.$  These spaces are shown to be invariant under Heisenberg translations and Heisenberg modulations which are different from euclidean modulations. We also establish some of the basic properties of $ M_\lambda^{p,q}(\R^{2n})$ and $ M^{p,q}(\He^n) $ 
such as completeness and invariance under suitable Fourier transforms.
 \end{abstract}


\section{Introduction} \label{Sec-intro} 
We begin by recalling the definition of the modulation spaces $ M^{p,q}(\R^n) $ introduced by Feichtinger in \cite{fe83-4} and a detailed set of results were published in \cite{HGF}. The left regular representation of $ \R^n $ acting on $ L^2(\R^n) $ is the translation operator $ \tau(x) $ whose action is simply given by $ \tau(x)f(\xi) = f(\xi-x) $ for $ f \in  L^2(\R^n).$ By conjugating with the Fourier transform $ \mathcal{F} $ which acts unitarily on $ L^2(\R^n) $ we get another representation $  e(y) = \mathcal{F}^\ast \circ \tau(y) \circ \mathcal{F} $  which are the modulation operators with action $ e(y)f(\xi) = e^{i y\cdot \xi} f(\xi).$ These two families of operators do not commute but lead to the Heisenberg group  $ \He^n = \R^n \times \R^n \times \R $ whose group law is
\begin{equation} (x,y,t)(u,v,s) = \big(x+u, y+v, t+s+\frac{1}{2}(u \cdot y-v \cdot x)\big).
\end{equation}
The Schr\"odinger representation $ \pi(x,y,t) $ of $ \He^n$ acting on $ L^2(\R^n) $ is defined by 
\begin{equation}
 \pi(x,y,t)\varphi(\xi) = e^{it}\, e^{i(x\cdot \xi +\frac{1}{2} x\cdot y)}\,\varphi(\xi+y),\, \, \varphi \in L^2(\R^n). 
 \end{equation}
We observe that  this family $ \pi(x,y,t) $ contains both translations and modulations. Recall that the modulation space $  M^{p,q}(\R^n), 1 \leq p,q \leq \infty $  is defined to be the class of tempered distributions $ f $ for which  the Fourier-Wigner transform  $ V_gf(x,y) = \langle f, e(y)\circ \tau(x) g \rangle $ belongs to $ L^{p,q}(\R^n \times \R^n) $
for some Schwartz function $ g \in \mathcal{S}(\R^n).$ 
 In terms of the operator $ \pi(x,y)=\pi(x,y,0) $ we  can also express $ V_gf $ as 
$ V_gf(x,y) =  e^{-\frac{i}{2} x\cdot y}\, \langle f, \pi(y,-x)g \rangle.$
Consequently, $ f \in M^{p,q}(\R^n) $ if and only if the matrix coefficient $ \langle f, \pi(y,-x)g \rangle \in  L^{p,q}(\R^n \times \R^n)  $ for some $ g \in \mathcal{S}(\R^n).$\\

It is well known that the above definition of $ M^{p,q}(\R^n) $ is independent of the `window function' $ g.$  When  $ g $ is chosen to be the standard Gaussian $ e^{-\frac{1}{2} |x|^2} $ it turns out that  
$$ V_gf(x,y) e^{\frac{i}{2}x\cdot y} = Bf(x-iy) e^{-\frac{1}{4}(|x|^2+|y|^2)} $$
where $ B $ is the Bargmann transform which takes $ L^2(\R^n) $ unitarily onto the Fock space $ \mathcal{F}(\C^n).$
Recall that $ \mathcal{F}(\C^n)$ is the weighted Bergman space of entire functions corresponding to the weight $ w(x+iy) = e^{-\frac{1}{2}(|x|^2+|y|^2)}.$ Thus $ f \in M^{p,q}(\R^n) $ if and only if the function 
$$ F(x,y) = Bf(x+iy) \sqrt{w(x+iy)} \in  L^{p,q}(\R^n \times \R^n) .$$  We have a perfect analogy when $ \He^n $ is  replaced by the twisted Heisenberg group $ \He_\lambda^n(\C)$ leading to the twisted modulation spaces 
$ M_\lambda^{p,q}(\R^{2n}).$ \\

The left regular representation  $ \He^n $ acting on $ L^2(\He^n)$ gives rise to a family of twisted translations $ \tau_\lambda(\xi), \xi \in \R^{2n} $ for each non-zero $ \lambda \in \R.$ These are unitary operators acting on $ L^2(\R^{2n}) $ whose action can be transferred to the twisted Fock spaces $ \Fs$ studied in \cite{GT, KTX}. This is achieved by means of the twisted Bargmann transform $ B_\lambda $ since $ \Fs $ is exactly the image of $ L^2(\R^{2n}) $ under $ B_\lambda.$  The resulting unitary operator $ \rho_\lambda(\xi) $ initially defind for $ \xi \in \R^{2n} $ has a natural extension to $ \C^{2n}.$ Thus for each $ \zeta \in \C^{2n}$ we have a unitary operator $ \rho_\lambda(\zeta) $  on $ \Fs.$ For $ \eta \in \R^{2n},$ the operators $ \rho_\lambda(\eta)$ and $ \rho_\lambda(i\eta) $ are unitarily equivalent:  there is a unitary operator $ U $ on $ \Fs $ so that $ \rho_\lambda(i\eta) = U^\ast \circ \rho_\lambda(\eta) \circ U.$\\

   By defining $ \Pi_\lambda(\zeta) = B_\lambda^\ast \circ \rho_\lambda(\zeta) \circ B_\lambda $ we get a family of unitary operators acting on $ L^2(\R^{2n}).$  For $ \xi \in \R^{2n}$ it happens that  $ \Pi_\lambda(\xi) = \tau_\lambda(\xi) $ and by defining $ e_\lambda(\eta) = \Pi_\lambda(i\eta),\, \eta \in \R^{2n} $ we get another family of unitary operators on $ L^2(\R^{2n}).$ 
In analogy with translations and modulations on $ \R^n $ leading to $ \He^n,$  the operators $ \tau_\lambda(\xi) $ and $ e_\lambda(\eta)$ lead to  the twisted Heisenberg group 
$ \He_\lambda^n(\C) = \C^{2n} \times \R $ introduced by one of us in \cite{ST-IJPAM}, see Section 3.1.  The group law of $ \He_\lambda^n(\C) $ is given by
$$ (\zeta,t)(\zeta^\prime,t^\prime) = \big(\zeta+\zeta^\prime,t+t^\prime+\frac{\lambda}{2} (\coth \lambda)\, \Im (\zeta \cdot \overline{\zeta^\prime})-\frac{\lambda}{2}\, \Re [\zeta, \overline{\zeta^\prime}]\big).$$ 
see Section 2.3  for the notation. In terms of $ \Pi_\lambda(\zeta)$ we can build a unitary representation $ \Pi_\lambda(\zeta, t) $ of the twisted Heisenberg group.\\\

 We will show that the  unitary operator $ U_\lambda = B_\lambda^\ast \circ U \circ  B_\lambda $  intertwines $ e_\lambda(\eta) $ and $ \tau_\lambda(\eta)$: thus $ e_\lambda(\eta) = U_\lambda^\ast \circ \tau_\lambda(\eta) \circ U_\lambda.$ This is reminiscent of the well known relation $ e(y) = \mathcal{F}^\ast \circ \tau(y) \circ \mathcal{F} $ where $ \mathcal{F} $ is the Fourier transform on $ L^2(\R^n).$ In view of this, the operators $ e_\lambda(\eta) $ are qualified to be called twisted modulations. In this article we are interested in knowing what kind of spaces we will end up with if translations and modulations on $ \R^n $ are replaced by twisted translations $ \tau_\lambda(\xi) $ and twisted modulations $ e_\lambda(\eta) $ acting on $ L^2( \R^{2n}) $ for a given non-zero $ \lambda \in \R.$ We are led to the twisted modulation spaces  $M_\lambda^{p,q}(\R^{2n}) $ consisting  of  tempered distributions  $ f $ on $ \R^{2n} $ for which
the matrix coefficients $ \langle f, \Pi_\lambda(\xi+i\eta)g \rangle $ belong to  the mixed norm space $ L^{p,q}(\R^{2n} \times \R^{2n})$ for some Schwartz function $ g $ on $ \R^{2n}.$\\

As in the classical setting, the above definition can be shown to be independent of the window function $ g.$ Moreover, when we choose 
$ g(x,y) = p_{1/2}^\lambda(x,y) $ where $ p_t^\lambda $ is the heat kernel associated to the special Hermite operator $ L_\lambda $ on $ \R^{2n}$ it turns out that 
\begin{equation}
 d_n\, \sqrt{c_\lambda}\, \langle f, \Pi_\lambda(\xi+i\eta)g \rangle = B_\lambda f(\xi+i\eta)\, \sqrt{w_\lambda(\xi+i\eta)} 
 \end{equation}
where $ c_\lambda = (4\pi)^{-n} \lambda^n (\sinh \lambda)^{-n} $ and $ d_n $ is a constant. Here $ w_\lambda $ is the weight function appearing in the definition of $\Fs.$ In view of this we note that $ f \in M_\lambda^{p,q}(\R^{2n}) $ if and only if  
$$B_\lambda f(\xi+i\eta)\, \sqrt{w_\lambda(\xi+i\eta)}  \in  L^{p,q}(\R^{2n} \times \R^{2n}).$$ 
In this article we study some of the basic properties of these modulation spaces.
 Another goal is to give a reasonable definition of modulation spaces on the Heisenberg group. In order to define them we make use of the twisted modulation spaces.\\


 The family of unitary operators $ \Pi_\lambda(\zeta) $ acting on $ L^2(\R^{2n})$
satisfy the composition formula
\begin{equation}\label{Pi_lambda composition formula intro} \Pi_\lambda (\zeta)\, \Pi_\lambda(\zeta^\prime) =  \Pi_\lambda(\zeta+\zeta^\prime) e^{-i \frac{\lambda}{2} ((\coth \lambda)\, \Im (\zeta \cdot \overline{\zeta^\prime})- \Re [\zeta, \overline{\zeta^\prime}])}
\end{equation}
where $ [\zeta, \overline{\zeta^\prime}]$ is the obvious extension to $ \C^{2n}$ of the symplectic form on $ \R^{2n}.$
This suggests that we introduce a group structure on $ \C^{2n+1} = \C^{2n} \times \C $ by the rule
$$ (\zeta, s) (\zeta^\prime,s^\prime) =  (\zeta+\zeta^\prime, s+s^\prime +\frac{1}{2}\Re [\zeta, \overline{\zeta^\prime}]+\frac{i}{2} \Im (\zeta \cdot \overline{\zeta^\prime})) .$$
We denote this group by $ G_n $ and note that $ G_n $ contains two copies of $ \He^n $ as subgroups.  We get a family of unitary representations of $ G_n $ by defining
$$  \Pi_\lambda (\zeta, s) = e^{i \lambda( \Re s -(\coth \lambda)\Im s)} \,  \Pi_\lambda (\zeta).$$
These representations  are  irreducible as the family $ \rho_\lambda(\zeta) $ is irreducible as shown in \cite{GT}. \\

The operators $ \Pi_\lambda(\zeta,s) $ initially defined on $ L^2(\R^{2n})$ can  be lifted to a unitary operator  $\Pi(\zeta,s)$  acting on $ L^2(\He^n).$  When   $ \xi \in \R^{2n}$ it can be shown that 
$ \Pi(\xi,s) = \tau(\xi,s) $ the left translation on the Heisenberg group.
 There is a unitary operator   $ U: L^2(\He^n) \rightarrow L^2(\He^n)$  which intertwines $ \Pi_\lambda(\eta,s) $ with $ \Pi_\lambda(i\eta,s) $  for $ \eta \in \R^{2n}.$ Thus the operator $ e(\eta,s) = \Pi_\lambda(i\eta,s) $ satisfies  $ e(\eta,s) = U^\ast \circ \tau(\eta,s) \circ U.$ For this reason we can consider $ e(\eta,s) $ as analogues of modulations on the Heisenberg group. 
 We look for Banach spaces of distributions on $ \He^n$ that are invariant under the Heisenberg  translations and modulations or more generally under $ \Pi(\zeta,s) $ for $ s \in \R.$ \\
 
 The representation $ \Pi(\zeta,s) $ fails to be square integrable but  by considering a   modified matrix coefficient we get an interesting identity. Indeed, we can show that  for $ f \in L^2(\He^n),$
 $$ \label{matrix-planch} \int_{\C^{2n}} \int_{-\infty}^\infty |\langle Tf, \Pi(\zeta,s) p_{1/2} \rangle|^2\, d\zeta\, ds  =  c_n\, \int_{\He^n} |f(h)|^2 \,dh.$$
 Here $ p_t $ is the heat kernel associated to the sublaplacian on $ \He^n $ and $ T $ is a certain Fourier multiplier operator acting on the central variable.
 The above identity follows from the relation 
 $$ d_n\,  \int_{-\infty}^\infty e^{i\lambda s} \, \langle Tf, \Pi(\zeta,s) p_{1/2} \rangle \, ds=  B_\lambda f^\lambda(\zeta)\,\sqrt{w_\lambda(\zeta)} $$   Taking the clue from the equivalent definitions of $M^{p,q}(\R^n)$ and $ M^{p,q}_\lambda(\R^{2n})$ in terms of the Bargmann/twisted Bargmann transforms we make the following definition. We say that a tempered distribution $ f $ on $ \He^n$ belongs to $ M^{p,q}(\He^n) $ if and only if for every $ \lambda \neq 0,$  its inverse Fourier transform  $ f^\lambda $ in the central variable  belongs to $  M^{p,q}_\lambda(\R^{2n})$  in such a way that
$$ \| f\|_{(p,q)}^2 =  \int_{-\infty}^\infty \| \sqrt{w_\lambda(\cdot)}\, B_\lambda f^\lambda(\cdot) \|_{L^{p,q}(\R^{2n}\times \R^{2n})}^2 \, d\lambda < \infty.$$
The logic behind  this particular definition is explained in  Section 4.3, see also Remark \ref{rem}. We will show that these are Banach spaces invariant under the action of $ \Pi(\zeta,s), s \in \R $ and also under $ U $ when $ p =q.$ Some more basic properties of these spaces will also be established.\\

Many important function spaces, such as modulation spaces on $ \R^n $ or Bergman spaces on the unit ball in $ \C^n $ occur as coorbit spaces defined in terms of matrix coefficients associated to irreducible unitary representations of certain Lie groups. In particular, the case of nilpotent Lie groups have received considerable attention in the works of  Beltita and Beltita  \cite{BB1, BB2} and Grochenig \cite{KG1}.  In these works  we find detailed study of modulation spaces associated to representations  of nilpotent Lie groups which are square integrable modulo the center.  Some of  our results on twisted modulation spaces  can be read out from these papers, e.g Theorem 3.3  from \cite{BB1} and Theorem 3.7 from \cite{BB2}.  However, our motivation for the study of twisted modulation spaces  comes from twisted Fock spaces and a search for a reasonable definition of modulation spaces on Heisenberg  groups.\\

We are not the first to consider modulation spaces on the Heisenberg group. We would like to bring the work of Fischer-Rottensteiner-Ruzhansky \cite{FRR} to the attention of the readers. In this very interesting work the authors have defined and studied modulation spaces on the Heisenberg group which they denote by $ E^{p,q}(\R^{2n+1}) $ in the unweighted case. These spaces are defined in terms of the matrix coefficients associated to a square integrable representation of the Dynin-Folland group $ \He_{n,2} $ in very much the same way $ M^{p,q}(\R^n)$ are defined in terms of the  Schr\"odinger representation  $ \pi$ of $ \He^n.$  This group studied by Dynin \cite{ASD} and Folland \cite{GBF1} is the semidirect product of  $ \He^n $ and $ \R^{2n+2}$ which contains $ \He^n $ as a subgroup. Compare this with the fact that $ \He^n $ can be considered as the semidirect product of  $ \R^n$ and $ \R^{n+1}.$  As a set $ \He_{n,2} $ can be identified with $ \He^n \times \He^n \times \R $ whose  elements are  written as $ (P,Q,S).$ \\

There is a family of irreducible unitary representation $ \pi_\lambda $ of $ \He_{n,2}$ whose action on $ L^2(\He^n) $ is explicitly given by 
$$ \pi_\lambda(P,Q,S)f(X) = e^{2\pi i \lambda( S+(Q,X))}\, f(XP).$$
The similarity between the Schr\"odinger representation is obvious. These  representations turn out be square integrable modulo the centre and hence there is a canonical way of defining the modulation spaces. Note that $ \pi(P,0,0) $ is the right translation by elements of  $ \He^n $ and $ \pi(0,Q,S) $ is the usual modulation on $ \R^{2n+1}.$  Thus the modulation spaces $ E^{p,q}(\R^{2n+1}) $  are invariant under the Heisenberg translations and euclidean modulations. Though at present we do not know how to compare these modulation spaces  with our $ M^{p,q}(\He^n),$ the following remarks are in order. \\

Despite there is an underlying nilpotent group $ G_n $ in our definition, we are motivated by the twisted modulation spaces which are associated to a square integrable representation $ \Pi_\lambda $ of  the twisted Heisenberg group $ \He^n_\lambda(\C).$  Even though the representation $ \Pi $ of $ G_n $ is built in terms of $ \Pi_\lambda$ it is not square integrable even if we go modulo the centre. In defining $ M^{p,q}(\He^n) $ we have used the modified matrix coefficients of $ \Pi $ restricted to $ \C^{2n} \times \R \subset G_n.$ The resulting spaces are invariant under Heisenberg translations $ \tau(h) $ and Heisenberg modulations $e(h)$ which are different from the standard euclidean modulations. Roughly speaking, our modulation spaces are direct integrals of twisted modulation spaces.\\

The plan of the paper is as follows. After recalling the basic representation theory of the Heisenberg group, twisted Bargmann transform and twisted Fock space in Section 2,  we move on to section 3 where we introduce the twisted Heisenberg group $ \He^n_\lambda(\C) $ and  study the matrix coefficients of a representation $ \Pi_\lambda $  realised on $ L^2(\R^{2n}).$ Later we define the twisted modulation spaces $ M_\lambda^{p,q}(\R^{2n})$ and study some basic properties of the same. In Section 4 we define the group $ G_n $ and introduce the  representation $ \Pi $ which is realised on $ L^2(\He^n) .$ Based on properties of the matrix coefficients of $ \Pi $ we define the modulation spaces $ M^{p,q}(\He^n) $ and investigate some of  its basic properties.\\

We conclude this introduction with a warning about the convention we have followed about constants that are not important. Apart from the well known convention that constants $ C $ in various inequalities are not necessarily same, we also use, by abuse of notation,   the same constant $ c_n $ or $ d_n $ in several equations  to avoid keeping track of their not so important exact values.\\

\section{Heisenberg groups and twisted Heisenberg groups}

\subsection{Heisenberg group $\He^n$ and the representations $ \pi_\lambda$} We begin by recalling some well known facts about the Heisenberg groups $ \He^n $ and their representations. For more details we refer to  the monographs \cite{GBF, ST-uncertainty}. The group $  \He^n$ is  just $ \R^{2n} \times \R  =  \C^n \times \R $ equipped with the group law
$$ (z,t)(w,s) =  (z+w, t+s+\frac{1}{2} \Im(z \cdot \bar{w})).$$
Note that  if we let $ z = x+iy, w = u+iv,$ then $ \Im(z \cdot \bar{w}) =(u \cdot y-x \cdot v)$ is nothing but the symplectic form $ [\xi,\eta] $ on $ \R^{2n}$ between $ \xi = (x,u)$ and $ \eta =(y,v).$ Therefore, the group law can also be written in the form
$$ (\xi,t)(\eta,s) =(\xi+\eta, t+s+\frac{1}{2}[\xi,\eta]).$$
The group $ \He^n $ is a step two nilpotent Lie group, it is therefore unimodular and the Haar measure is simply given by the Lebesgue measure on $ \R^{2n+1}.$  The  spaces $ L^p(\He^n) $ are defined with respect to the Lebesgue measure. The convolution between two functions $ f, g \in L^1(\He^n) $ is defined in the usual way
$$ f \ast g(\xi,t) = \int_{\He^n} f ( (\xi,t)(\eta,s)^{-1}) \, g(\eta,s)\, d\eta\, ds.$$
The convolution on the Heisenberg group gives rise to a family of convolutions  for functions on $ \R^{2n} $ known as twisted convolutions.\\

To define them let us set up some notation.
For a function $ f \in L^1(\He^n) $ let us denote by $ f^\lambda (x,u)$ the inverse Fourier transform of $ f $ in the central variable: thus
$$ f^\lambda(\xi) = \int_{-\infty}^\infty  f(\xi,t)\, e^{i\lambda t}\, dt.$$
Since $ (\eta,s)^{-1} = (-\eta,-s) $ a simple calculation shows that
$$ (f \ast g)^\lambda(\xi) = \int_{\R^{2n}} f^\lambda(\xi-\eta)\, g^\lambda(\eta)\,e^{i\frac{\lambda}{2} [\xi,\eta]}\, d\eta .$$
The right hand side defines what is known as the $ \lambda$-twisted convolution of $ f^\lambda $ with $ g^\lambda $  denoted by $ f^\lambda \ast_\lambda g^\lambda.$
By defining  the $\lambda$-twisted translation $ \tau_\lambda(\eta) = \tau_\lambda(y,v) $ by the equation 
\begin{align} \label{def:twisted-translation}
  \tau_\lambda(\eta)g^\lambda(\xi) = g^\lambda(\xi-\eta)\, e^{-i\frac{\lambda}{2} [\xi,\eta]} 
 \end{align}
we see that  the twisted convolution $ f^\lambda \ast_\lambda g^\lambda $  is given by the following integral
\begin{equation}\label{t-con} f^\lambda \ast_\lambda g^\lambda(\xi) = \int_{\R^{2n}} f^\lambda(\eta)\,  \tau_\lambda(\eta)g^\lambda(\xi)\, d\eta. \end{equation}
Taking partial Fourier transform in the central variable is a useful technique which reduces problems on  $ \He^n$ to problems on $ \R^{2n}.$\\

We now briefly recall the representation theory of $ \He^n$ which is needed in order to define the group Fourier transform. For each $ \lambda \in \R, \lambda \neq 0,$ there is an irreducible unitary representation $ \pi_\lambda $ of $ \He^n$ on $ L^2(\R^n) $ explicitly given by (see \cite{GBF})
$$ \pi_\lambda(x,y,t)f(\xi) = e^{i\lambda t} e^{i\lambda(x \cdot \xi +\frac{1}{2} x\cdot y)} f(\xi+y),\,\,\, f \in L^2(\R^n).$$
We use the notation $ \pi_\lambda(x,y) $ for the operator $ \pi_\lambda(x,y,0)$ and when $ \lambda = 1$ we simply write $ \pi(x,y) $ instead of $\pi_1(x,y) .$ We observe that  $ \pi(x,y) $ is factored as 
$ \pi(x,y) = e^{\frac{i}{2} x\cdot y} \, e(x)\circ \tau(-y) $  where  $ \tau(y) $ and $ e(x) $ are the standard euclidean translation and modulation. The Fourier transform a  function $ f \in L^1(\He^n) $ is the operator valued function 
$$ \hat{f}(\lambda) =: \int_{\He^n} f(\xi,t) \, \pi_\lambda(\xi,t)\,\, d\xi\, dt.$$
Recalling the definition of $ \pi_\lambda $ and $ f^\lambda $ we see that $ \hat{f}(\lambda) = \pi_\lambda(f^\lambda) $ where 
$$ \pi_\lambda(f^\lambda) =: \int_{\R^{2n}} \, f^\lambda(\xi)\, \pi_\lambda(\xi)\, d\xi $$
is known as the Weyl transform of $ f^\lambda.$ Properties of the Fourier transform $ f \rightarrow \hat{f} $ on $ \He^n$ are proved by studying the Weyl transform.\\

The Weyl transform initially defined on $ L^1(\R^{2n}) \cap  L^2(\R^{2n}) $  has an extension to the whole of $ L^2(\R^{2n})$ as a Hilbert-Schmidt operator.
It takes $ L^2(\R^{2n}) $ onto the the space $ \mathcal{S}_2 $ of Hilbert -Schmidt operators acting on $ L^2(\R^n).$ The inversion formula for the Weyl transform reads as
\begin{equation}\label{weyl-inversion}
f(x,u) = (2\pi)^{-n/2}\, |\lambda|^{n/2}\, \tr (\pi_\lambda(-x,-u) \pi_\lambda(f)).
\end{equation}
It can be shown that  $ \pi_\lambda(f \ast_\lambda g) = \pi_\lambda(f)\, \pi_\lambda(g) .$  For easy reference we record the following relation between  the two families 
of unitary operators $ \tau_\lambda(a,b) $ and $ \pi_\lambda(x,u):$ 
$$ \pi_\lambda(a,b)\pi_\lambda(x,u) = \pi_\lambda(x+a,u+b) e^{-i\frac{\lambda}{2}(u\cdot a- x\cdot b)} $$
which follows from the fact that $ \pi_\lambda $ is a representation of the Heisenberg group $ \He^n.$ Another easy calculation shows that
$$ \pi_\lambda(a,b) \pi_\lambda(f) = \int_{\R^{2n}}  f(x,u) \pi_\lambda(x+a,u+b) e^{-i\frac{\lambda}{2}(u\cdot a- x\cdot b)} dx\,du .$$
Hence the Weyl transform of  the twisted translation of a function is given by the relation 
$ \pi_\lambda(a,b) \pi_\lambda(f) = \pi_\lambda( \tau_\lambda(a,b)f) $
which  translates into the property
\begin{equation}\label{rel-one}
 \tau_\lambda(a,b) ( f\ast_\lambda g) = \tau_\lambda(a,b)f \ast_\lambda g 
 \end{equation}
for  the $\lambda$-twisted convolution of $ f $ with $ g $ in view of  the relation $ \pi_\lambda(f \ast_\lambda g) = \pi_\lambda(f)\, \pi_\lambda(g) .$ \\

\subsection{The twisted Bargmann transform and the twisted Fock spaces}
As we have remarked in the introduction the classical modulation spaces can be defined in terms of the Bargmann transform $ B $ which takes $ L^2(\R^n) $ unitarily onto the Fock space $ \mathcal{F}(\C^n) $ consisting entire functions on $ \C^n$ that are square integrable with respect to the Gaussian measure. It is well known that Bargmann transform is closely related to the Schr\"odinger representation $ \pi $ of  $ \He^n.$ In a similar way, we define the twisted  modulation spaces in terms of a unitary representation of the twisted Heisenberg group. The motivation for this group comes from certain unitary representation of the twisted Fock space $ \Fs $ which is the image of $ L^2(\R^{2n}) $ under the twisted Bargmann transform $ B_\lambda.$ Hence we begin by recalling the necessary background material to state relevant results on $ \Fs.$ The basic reference for this section is \cite{KTX,GT, ST-IJPAM}. We also refer the reader to \cite{ST-princeton, ST-uncertainty} for results related to Hermite and special Hermite operators.\\

Consider the scaled Hermite operator $ H(\lambda) = -\Delta+\lambda^2 |x|^2 $ on $ \R^n$  which is a positive operator with an explicit spectral decomposition given in terms of the Hermite functions.  We 
let $ e^{-tH(\lambda)}, t >0$ stand for the  semigroup generated by $ H(\lambda). $  We also consider the special Hermite operator $ L_\lambda $ on $ \R^{2n} $ generating another semigroup $ e^{-t L_\lambda} $ with the associated kernel
$$ p_t^\lambda(x,u) = (4\pi)^{-n} \, \lambda^n (\sinh t\lambda)^{-n} e^{-\frac{1}{4}\lambda (\coth t\lambda)(|x|^2+|u|^2)} $$
which has  a holomorphic extension $ p_t^\lambda(z,w) $ to $ \C^{2n}.$  Let us write  $ c_\lambda = (4\pi)^{-n} \, \lambda^n (\sinh \lambda)^{-n} .$ The twisted Bargmann transform $ B_\lambda $ is defined on $ L^2(\R^{2n}) $ by
\begin{align} \label{def:Gauss-Bargmann-transform-tiwsted-fock}
B_\lambda f(z,w) = (2\pi)^{-n/2}\, |\lambda|^{n/2}\,  c_\lambda \,\, p_1^\lambda(z,w)^{-1}\, \tr\left( \pi_\lambda(-z,-w) \pi_\lambda(f) e^{-\frac{1}{2}H(\lambda)} \right) .
\end{align}
In view of the inversion formula \eqref{weyl-inversion} for the Weyl transform, using the fact that $ \pi_\lambda(p_t^\lambda) = e^{-tH(\lambda)} ,$  the twisted  Bargmann transform is also given by
$$ B_\lambda f(z,w) =  \,c_\lambda \,\, p_1^\lambda(z,w)^{-1}\,  f \ast_\lambda p_{1/2}^\lambda(z,w). $$
The image of $ L^2(\R^{2n}) $ under $ B_\lambda $ is known to be a weighted Bergman space with the explicit weight function
$$ w_\lambda(\zeta) = c_{\lambda}\, \, e^{-\frac{1}{2} \lambda (\coth \lambda)|\zeta|^2}\, e^{\lambda [ \Re \zeta,\,\Im \zeta ]},\,\,  \zeta = (z,w). $$
Here $ [\xi,\eta] $ is the symplectic form on $ \R^{2n} $ defined by $ [(x,u),(y,v)] = (u\cdot y-v\cdot x).$ \\

The twisted Fock space $ \Fs $ is defined as the space of all entire functions on $ \C^{2n} $ square integrable with respect to the measure $ w_\lambda(\zeta)\, d\zeta $ where $ d\zeta $ is the Lebesgue measure on $ \C^{2n}.$  When equipped with the inner product 
$$ \langle F, G \rangle = \int_{\C^{2n}} F(\zeta) \, \overline{G}(\zeta)\, w_\lambda(\zeta)\, d\zeta ,$$ 
the space $ \Fs $ becomes a reproducing kernel Hilbert space and the Bargmann transform
$ B_\lambda: L^2(\R^{2n}) \rightarrow \Fs $ becomes  a unitary operator. The reproducing kernel for $ \Fs$ is known explicitly and given by
\begin{equation}\label{rep-ker}
K_\zeta(\zeta^\prime) = d_n \, c_\lambda\, \,e^{ \frac{\lambda}{2} (\coth \lambda)\, (\overline{\zeta} \cdot \zeta^\prime)}\, e^{i\frac{\lambda}{2}\,  [ \overline{\zeta}, \zeta^\prime]}
\end{equation}
where $ d_n $ is a constant depending only on the dimension. We refer the reader to \cite{KTX, GT} for more details of this transform and further properties of $ \Fs.$ We note that when $ \lambda = 0 $ the twisted Fock space reduces to the classical Fock space $ \mathcal{F}(\C^{2n}).$\\

The operator $ U $ defined on $ \Fs $  by $ UF(\zeta) = F(-i\zeta)$  is a unitary operator since the weight function $ w_\lambda $ and the Lebesgue measure $ d\zeta $ are invariant under the map $ \zeta \rightarrow -i\zeta.$ Consequently, as $ B_\lambda $ is unitary, there exists a unitary operator $ U_\lambda $ on $ L^2(\R^{2n}) $ defined by the relation
$ U \circ B_\lambda = B_\lambda \circ U_\lambda.$ In \cite{GT} the authors have calculated the operator $ U_\lambda $ explicitly and shown that for  any $ f \in  L^2(\R^{2n}) $  
\begin{align} \label{intertwine}
U_{\lambda}f(x,u) = c(\lambda)^n \, \widehat{f}(c(\lambda)(x,u)) 
\end{align}
where $ c(\lambda)= \frac{1}{2} \lambda (\coth  \frac{1}{2}\lambda)$  and $\widehat{f}$ is the euclidean Fourier transform of $f$ on $\mathbb R^{2n}.$ The operator $U_\lambda $ becomes the Fourier transform on $ \R^{2n}$ when $ \lambda $ becomes $0.$ The function $ c(\lambda) $ occurs in many places  whose properties are investigated in Lemma \ref{clambda}.\\

\subsection{Twisted Heisenberg group and a unitary representation}  We begin by setting up some notations. The symplectic form on $ \R^{2n} $ defined by  $ [ (x,u), (y,v) ] = (u \cdot y-v \cdot x)$  has a natural extension to $ \C^{2n} $ given by 
$$[\zeta, \zeta^\prime] =[(z,w),(z^\prime,w^\prime)] = (w\cdot z^\prime- z \cdot w^\prime),\,\, \zeta =(z,w), \zeta^\prime =(z^\prime,w^\prime).$$
For any $ \lambda \neq 0,$ the twisted Heisenberg group $ \He^n_\lambda(\C) $ is defined as the manifold $ \C^{2n}\times \R $ equipped with the group law
$$ (\zeta,t)(\zeta^\prime,t^\prime) = \big(\zeta+\zeta^\prime,t+t^\prime+\frac{\lambda}{2} (\coth \lambda)\, \Im (\zeta \cdot \overline{\zeta^\prime})-\frac{\lambda}{2}\, \Re [\zeta, \overline{\zeta^\prime}]\big). $$
Note that when $ \lambda = 0$ the twisted Heisenberg group reduces to the standard Heisenberg group $ \He^{2n} = \C^{2n}\times \R$ with the group law
$$ (\zeta,t)(\zeta^\prime,t^\prime) = \big(\zeta+\zeta^\prime,t+t^\prime+\frac{1}{2} \, \Im (\zeta \cdot \overline{\zeta^\prime})\big). $$
 The twisted Heisenberg group is an example of a  Mackey group, see \cite{GM}. \\
 
 To every real $ d \times d $ skew-symmetric matrix $ \Theta $ there is an associated nilpotent Lie group $ \He_\Theta $ which is just  $ \R^d \times \mathbb{T} $ equipped with the following group law:
$$ (x,e^{it})(y,e^{is}) =( x+y, e^{i(s+t+\frac{1}{2} \langle x, \Theta y\rangle)}).$$ The universal covering of this group is $ G_\Theta = \R^d \times \R $ with the group law
$$ (x,s)(y,t) = (x+y, s+t+\frac{1}{2}\langle x, \Theta y\rangle).$$
Note that Heisenberg groups are of the form $ G_\Theta$ for a suitable $ \Theta.$ More generally, as proved by Arhancet et al in \cite{AHKP} $ G_\Theta $ is a Metivier group if and only if $ \Theta$ is invertible and a H-type group if and only if $\Theta $ is orthogonal.\\

Identifying $ \C^{2n} $ with $ \R^{2n} \times \R^{2n} $ and writing $ \zeta = \xi+i\eta,\, \xi=(x,u), \eta = (y,v)  $ the  group product $(\zeta,t)(\zeta^\prime,t^\prime)$ of two elements from $ \He^n_\lambda(\C) $ is given by
$$ \big(\xi+\xi^\prime,\eta+\eta^\prime,\, t+t^\prime+\frac{\lambda}{2} (\coth \lambda)( x^\prime \cdot y- y^\prime \cdot x +u^\prime \cdot v-v^\prime \cdot u)-\frac{\lambda}{2}\,(x^\prime \cdot u-u^\prime \cdot x +y^\prime \cdot v-v^\prime \cdot y) \big) .$$
In terms of    $ I_n ,$  the $ n \times n $ identity matrix, and the matrix 
$
\mathbf{J}=
\begin{pmatrix}
    0 & I_n \\
    -I_n & 0 \\
  \end{pmatrix}
$ we define  the matrix $ \Theta_\lambda $  by
$$ \Theta_\lambda=\lambda\begin{pmatrix}
\mathbf{J}& -(\coth\lambda)I_{2n}\\
(\coth\lambda)I_{2n}& \mathbf{J}
\end{pmatrix}. $$
Then $ \He^n_\lambda(\C) $  becomes a Mackey group since the group law can be written in   the form
$$ \big((\xi,\eta)+(\xi^\prime,\eta^\prime), t+t^\prime+ \frac{1}{2} \langle (\xi,\eta), \Theta_\lambda(\xi^\prime,\eta^\prime)\rangle \big).$$
It is clear that $ \Theta_\lambda $ is skew-symmetric and easy to check that it is invertible but not orthogonal. Thus $ \He^n_\lambda(\C) $ is a Metivier group but not of Heisenberg type.
\\

The twisted Fock space $ \Fs$ supports a representation $ \rho_\lambda(\zeta,t) $ of the twisted Heisenberg group which we describe now.  We consider the following family of unitary operators defined on the twisted Fock space by
\begin{equation}\label{fock-rep}
 \rho_\lambda(\zeta^\prime)F(\zeta) = F(\zeta-\zeta^\prime)\, e^{ \frac{\lambda}{2} (\coth \lambda)\, (\zeta \cdot \overline{\zeta^\prime})}\, e^{-i\frac{\lambda}{2}\,  [\zeta, \overline{\zeta^\prime}]}\, e^{-\frac{\lambda}{4} (\coth \lambda)|\zeta^\prime|^2}\,  e^{\frac{\lambda}{2} [ \Re \zeta^\prime,\,\Im \zeta^\prime ]}.
 \end{equation}
 As shown in \cite{GT} this family of operators satisfy the composition law 
 $$ \rho_\lambda(\zeta)\, \rho_\lambda(\zeta^\prime) = \rho_\lambda(\zeta+\zeta^\prime) \, e^{-i \frac{\lambda}{2} ((\coth \lambda)\, \Im (\zeta \cdot \overline{\zeta^\prime})- \Re [\zeta, \overline{\zeta^\prime}])}.$$
 By defining   $ \rho_\lambda(\zeta^\prime,t^\prime) = e^{-it^\prime}\, \rho_\lambda(\zeta^\prime) $ it is not difficult to show that $ \rho_\lambda $ is indeed a unitary representation of $ \He^n_\lambda(\C).$ In fact, the group law of $ \He_\lambda^n $  itself was suggested by the above composition law.
 In \cite{GT} the authors have shown that $ \rho_\lambda $ is an irreducible unitary representation of the twisted Heisenberg group, see also \cite{ST-IJPAM}.  As the twisted Fock space $ \Fs$ is isometrically isomorphic to $ L^2(\R^{2n}) $ this representation leads to another representation realised on $ L^2(\R^{2n}).$  The action of this representation which we denote by $ \Pi_\lambda $ can be calculated explicitly in terms of twisted translations and twisted modulations.

\section{Twisted modulations spaces} 

\subsection{ Twisted translations and modulations} We now  proceed to define twisted modulations. By conjugating $ \rho_\lambda $ with the unitary operator $ B_\lambda $ we get a  representation $ \Pi_\lambda $ of $ \He^n_\lambda(\C) $ realised on $ L^2(\R^{2n}).$ Thus we define $$ \Pi_\lambda(\zeta,t) = B_\lambda^\ast \circ \rho_\lambda( \zeta, t)
\circ B_\lambda.$$ Note that $\Pi_\lambda(\zeta,t)= e^{-it}\, \Pi_\lambda(\zeta) $ where $\Pi_\lambda(\zeta)=\Pi_\lambda(\zeta,0)$ which can be calculated explicitly. With $ \zeta = \xi+i\eta ,$ sometimes we use the notation $ \Pi_\lambda(\xi,\eta) $ for $ \Pi_\lambda(\xi+i\eta).$ An easy calculation shows that the operator $ \rho_\lambda(\zeta)= \rho_\lambda(\xi+i\eta) $
can be factored as
$$ \rho_\lambda(\xi+i\eta) = e^{i\frac{\lambda}{2} (\coth \lambda)\, \xi \cdot \eta}\, \rho_\lambda(i\eta)\, \rho_\lambda(\xi)\,  .$$ 
In view of this, conjugation with the twisted Bargmann transform $ B_\lambda $ leads to the relation 
\begin{equation}\label{pi-rep} \Pi_\lambda(\xi, \eta) = e^{i\frac{\lambda}{2} (\coth \lambda)\, \xi \cdot \eta}\, e_\lambda(\eta) \, \tau_\lambda(\xi)\, \end{equation}
where $ \tau_\lambda(\xi) = B_\lambda^\ast \circ \rho_\lambda(\xi) \circ B_\lambda $ and $ e_\lambda(\eta) = B_\lambda^\ast \circ \rho_\lambda(i\eta) \circ B_\lambda .$ These operators  acting on $ L^2(\R^{2n})$ can be calculated explicitly.\\

From  the definition of $ \rho_\lambda(\zeta) $ and $ U $ it is easy to check that  $ U \circ \rho_\lambda(\zeta)\circ U^\ast = \rho_\lambda(i\zeta)$ and hence recalling that $ U \circ B_\lambda = B_\lambda \circ  U_\lambda $ we obtain the relation
$$ \tau_\lambda(\eta) =  B_\lambda^\ast  \circ U^\ast \circ \rho_\lambda(i\eta) \circ U \circ B_\lambda  = U_\lambda^\ast \circ B_\lambda^\ast  \circ \rho_\lambda(i\eta) \circ  B_\lambda \circ U_\lambda.$$
This simply means that $ e_\lambda(\eta) $ and $ \tau_\lambda(\eta)$ are unitarily equivalent to each other:
\begin{equation}\label{trans-mod} e_\lambda(\eta) =  U_\lambda \circ \tau_\lambda(\eta) \circ  U_\lambda^\ast.
\end{equation}
 As we show below $ \tau_\lambda(\xi) $  turned out to be nothing but  the $\lambda$-twisted translation. Since $ e_\lambda(\eta) $ is unitarily equivalent to $ \tau_\lambda(\eta) $ we call them $\lambda$-twisted modulations.\\

\begin{prop} For $ f \in L^2(\R^{2n}) $ and $ (a, b) \in \R^{2n} $ we have the following:
\begin{align} \label{twisted-translation} 
B_\lambda^\ast \circ \rho_\lambda(a,b) \circ B_\lambda =\tau_\lambda(a,b),\,\,
\end{align}
is just the $\lambda$-twisted translation whereas the $\lambda$-twisted modulations are given by
\begin{align} \label{twisted-modlation} 
e_\lambda(a,b)f(x,u) = f\big(x-(\tanh \frac{ \lambda}{2})b,u+(\tanh \frac{\lambda}{2})a\big)\,e^{-i\frac{\lambda}{2}(\coth \frac{\lambda}{2})(x\cdot a+ u\cdot b)} .
\end{align}
\end{prop}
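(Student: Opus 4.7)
The two identities have quite different flavours, so I would split the argument in two.

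\medskip

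\noindent\emph{First identity.} My plan is to establish the stronger intertwining
\[
B_\lambda \circ \tau_\lambda(a,b) \;=\; \rho_\lambda(a,b)\circ B_\lambda,
\]
from which \eqref{twisted-translation} follows by the unitarity of $B_\lambda$. Starting from the convolutional description $B_\lambda f(\zeta)= c_\lambda\, p_1^\lambda(\zeta)^{-1}(f\ast_\lambda p_{1/2}^\lambda)(\zeta)$, the relation \eqref{rel-one} gives
\[
B_\lambda(\tau_\lambda(a,b)f)(\zeta)
\;=\; c_\lambda\, p_1^\lambda(\zeta)^{-1}\,\bigl(f\ast_\lambda p_{1/2}^\lambda\bigr)\!\bigl(\zeta-(a,b)\bigr)\, e^{-i\frac{\lambda}{2}[\zeta,(a,b)]}.
\]
On the other hand, since $(a,b)\in\R^{2n}$ the defining formula \eqref{fock-rep} for $\rho_\lambda$ simplifies (because $\overline{(a,b)}=(a,b)$ and $\Im(a,b)=0$) to
\[
\rho_\lambda(a,b)B_\lambda f(\zeta)
\;=\; c_\lambda\, p_1^\lambda(\zeta-(a,b))^{-1}(f\ast_\lambda p_{1/2}^\lambda)(\zeta-(a,b))\, e^{\frac{\lambda}{2}(\coth\lambda)\zeta\cdot(a,b)}\, e^{-i\frac{\lambda}{2}[\zeta,(a,b)]}\, e^{-\frac{\lambda}{4}(\coth\lambda)|(a,b)|^2}.
\]
Equality of the two displayed expressions reduces to the Gaussian identity
\[
\frac{p_1^\lambda(\zeta-(a,b))}{p_1^\lambda(\zeta)} \;=\; e^{\frac{\lambda}{2}(\coth\lambda)\zeta\cdot(a,b)}\, e^{-\frac{\lambda}{4}(\coth\lambda)|(a,b)|^2},
\]
which is an immediate consequence of the explicit quadratic exponent in $p_1^\lambda$.

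\medskip

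\noindent\emph{Second identity.} I would use the already-established equivalence \eqref{trans-mod}, namely $e_\lambda(a,b)=U_\lambda\circ\tau_\lambda(a,b)\circ U_\lambda^\ast$, together with the explicit form \eqref{intertwine} of $U_\lambda$ as a dilated Fourier transform. Writing $U_\lambda = D_{c(\lambda)}\circ \mathcal{F}$ with $D_c f(\xi)=c^n f(c\xi)$, the standard commutation rules between $\mathcal{F}$ and $D_c$ and between $\mathcal{F}$ and translations/modulations on $\R^{2n}$ convert the twisted translation $\tau_\lambda(a,b)g(x,u)=g(x-a,u-b)e^{-i\frac{\lambda}{2}(u\cdot a-x\cdot b)}$ into a composition of a pure modulation and a pure translation on the Fourier side, scaled by $c(\lambda)$. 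The translation piece of $\tau_\lambda(a,b)$ turns into a modulation by $(a,b)/c(\lambda)$, while the phase $e^{-i\frac{\lambda}{2}(u\cdot a-x\cdot b)}$ produces a translation by $\frac{\lambda}{2c(\lambda)}\mathbf{J}(a,b)=\frac{\lambda}{2c(\lambda)}(-b,a)$. Using the identities
\[
\tfrac{\lambda}{2c(\lambda)}=\tanh\tfrac{\lambda}{2}, \qquad \tfrac{1}{c(\lambda)}\cdot\lambda = 2\tanh\tfrac{\lambda}{2},\qquad \tfrac{2c(\lambda)}{\lambda}=\coth\tfrac{\lambda}{2},
\]
one recognises the translation argument $\bigl(x-(\tanh\tfrac{\lambda}{2})b,\, u+(\tanh\tfrac{\lambda}{2})a\bigr)$ and the modulation phase $e^{-i\frac{\lambda}{2}(\coth\frac{\lambda}{2})(x\cdot a+u\cdot b)}$, giving \eqref{twisted-modlation} after a final check that the two cross terms produced by $D_{c(\lambda)}$ and $\mathcal{F}$ combine to form precisely the factor $\coth(\lambda/2)$ in the phase.

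\medskip

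\noindent\emph{Main obstacle.} The first identity is essentially forced once one writes both sides explicitly, so I expect no serious difficulty there. The real bookkeeping happens in the second identity: one must correctly track how $D_{c(\lambda)}\circ\mathcal{F}$ transports the \emph{combined} translation-plus-phase that defines $\tau_\lambda$, and then simplify the resulting compound phase using the trigonometric identities relating $c(\lambda)$ to $\tanh(\lambda/2)$ and $\coth(\lambda/2)$. Getting the signs in the symplectic ``swap'' $(a,b)\mapsto (-b,a)$ right, and confirming that the two scaling factors produced by the dilation and the Fourier transform really conspire to yield $\coth(\lambda/2)$ rather than some multiple of it, is the delicate step.
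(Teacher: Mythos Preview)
Your proposal is correct and follows essentially the same route as the paper. For the first identity the paper also computes $\rho_\lambda(a,b)B_\lambda f$ and $B_\lambda(\tau_\lambda(a,b)f)$ directly via the convolutional form of $B_\lambda$, the relation \eqref{rel-one}, and the explicit Gaussian $p_1^\lambda$; for the second identity the paper likewise invokes \eqref{trans-mod} together with the description \eqref{intertwine} of $U_\lambda$ as a dilated Fourier transform and leaves the remaining bookkeeping as ``an easy calculation,'' which is exactly the sign-and-scaling check you flag as the main obstacle.
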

\begin{proof}  If we let $ F = B_\lambda f,$ then by  the inversion formula \eqref{weyl-inversion} we see that
$$ F(x,u) = c_\lambda\,  p_1^\lambda(x,u)^{-1}\, f \ast_\lambda p_{1/2}^\lambda(x,u).$$
Recalling the action of $ \rho_\lambda(a,b) $ on $ F $ we get
$$\rho_\lambda(a,b)F(x,u)=e^{-\frac{\lambda}{4}(\coth \lambda)(a^2+b^2)}\,e^{\frac{\lambda}{2}(a\cdot x+b\cdot u)} \, \tau_\lambda(a,b)F(x,u).$$
Using the relation \eqref{rel-one} we calculate that
$$ (\tau_\lambda(a,b)F(x,u) = p_1^\lambda(x-a,u-b)^{-1}\, p_1^\lambda(x,u)\, B_\lambda (\tau_\lambda(a,b)f)(x,u).$$
Using this in the expression for $(\tau_\lambda(a,b)F(x,u) $ and simplifying we get
$$\rho_\lambda(a,b)(B_\lambda f)(x,u)= B_\lambda(\tau_\lambda(a,b)f)(x,u).$$
We  calculate $ e_\lambda(a,b) =B_\lambda^\ast \circ \rho_\lambda(i(a,b)) \circ B_\lambda$ by  making use of the relation \eqref{trans-mod} verified above. As $ U_\lambda $ is essentially the Fourier transform, the  expression for $ e_\lambda(a,b) $ is obtained after an easy calculation.
\end{proof}

As the representation $ \Pi_\lambda$ on $ L^2(\R^{2n})$ is unitarily equivalent  to the irreducible representation $ \rho_\lambda $ on $ \Fs ,$ it is also irreducible. We now show that  it is square integrable modulo the center of the twisted Heisenberg group $ \He_\lambda^n(\C).$  The following proposition is also true in the more general context of Metivier groups, see \cite{BB1}.

\begin{prop}\label{square-int} For any $ f, g \in L^2(\R^{2n}),$ the matrix coefficient $\langle f, \Pi_\lambda(\xi,\eta)g \rangle $ belongs to $ L^2(\R^{2n} \times \R^{2n}).$ Moreover,
$$ \int_{\R^{2n}}\int_{\R^{2n}} |\langle f, \Pi_\lambda(\xi,\eta)g \rangle |^2\, d\xi\, d\eta =  d_n \, c_\lambda^{-2}   \left(\int_{\R^{2n}} |f(\xi)|^2\, d\xi \right) \left(\int_{\R^{2n}}|g(\xi)|^2\, d\xi \right).$$
\end{prop}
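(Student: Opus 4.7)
My plan is to transfer the identity to the twisted Fock space $\mathcal{F}^\lambda(\C^{2n})$ via the unitarity of the Bargmann transform. Setting $F = B_\lambda f$ and $G = B_\lambda g$, the definition $\Pi_\lambda(\zeta) = B_\lambda^\ast \circ \rho_\lambda(\zeta) \circ B_\lambda$ together with the isometry of $B_\lambda$ yields
\[
\langle f, \Pi_\lambda(\zeta) g\rangle_{L^2(\R^{2n})} = \langle F, \rho_\lambda(\zeta) G\rangle_{\mathcal{F}^\lambda}
\]
and $\|F\|_{\mathcal{F}^\lambda} = \|f\|_{L^2}$, $\|G\|_{\mathcal{F}^\lambda} = \|g\|_{L^2}$, so it suffices to prove the analogous identity in $\mathcal{F}^\lambda$.

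The key step is to first treat the distinguished window $G_0 \equiv 1 \in \mathcal{F}^\lambda$. Applying \eqref{fock-rep} to the constant function and using the commutativity $\zeta\cdot\bar{\zeta'} = \bar{\zeta'}\cdot\zeta$ of the bilinear dot product together with the skew-symmetry $[\zeta,\bar{\zeta'}] = -[\bar{\zeta'}, \zeta]$ of the symplectic form on $\C^{2n}$, one verifies directly that
\[
\rho_\lambda(\zeta)\cdot 1 \;=\; \frac{B(\zeta)}{d_n\, c_\lambda}\, K_\zeta,\qquad B(\zeta) := e^{-\frac{\lambda}{4}(\coth\lambda)|\zeta|^2}\, e^{\frac{\lambda}{2}[\Re\zeta,\Im\zeta]},
\]
where $K_\zeta$ is the reproducing kernel \eqref{rep-ker}. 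Since $B(\zeta)$ is real-valued with $B(\zeta)^2 = c_\lambda^{-1} w_\lambda(\zeta)$, the reproducing property $\langle F, K_\zeta\rangle = F(\zeta)$ gives
\[
|\langle F, \rho_\lambda(\zeta)\cdot 1\rangle|^2 \;=\; \frac{c_\lambda^{-1}\, w_\lambda(\zeta)}{d_n^2\, c_\lambda^2}\, |F(\zeta)|^2,
\]
and integrating in $\zeta$ produces a constant multiple of $\|F\|^2_{\mathcal{F}^\lambda}$, establishing the identity for $g = B_\lambda^\ast G_0$ once the Gaussian integral $\|1\|^2_{\mathcal{F}^\lambda}$ is absorbed into the generic dimensional constant $d_n$.

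To pass from this distinguished window to an arbitrary $G$, I would invoke Schur orthogonality: since $\rho_\lambda$ is irreducible by \cite{GT} and $\He_\lambda^n(\C)$ is unimodular (being nilpotent), the Duflo--Moore theorem produces a universal constant $C$ with
\[
\int_{\C^{2n}} \langle F, \rho_\lambda(\zeta) G\rangle\, \overline{\langle F', \rho_\lambda(\zeta) G'\rangle}\, d\zeta \;=\; C\, \langle F, F'\rangle\, \overline{\langle G, G'\rangle}
\]
for all $F, F', G, G' \in \mathcal{F}^\lambda$; setting $F = F'$ and $G = G' = 1$ and comparing with the computation above pins $C = d_n c_\lambda^{-2}$. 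The main obstacle is the phase-Gaussian bookkeeping needed to match $\rho_\lambda(\zeta)\cdot 1$ with a scalar multiple of $K_\zeta$, which relies on careful tracking of the symmetries of $\cdot$ and $[\,,\,]$ on $\C^{2n}$; a purely computational alternative, avoiding Schur, would expand $|\langle F, \rho_\lambda(\zeta) G\rangle|^2$ as a double integral in $(\zeta_1, \zeta_2)$, integrate in $\zeta$ first using Fubini, and evaluate the resulting Gaussian integral via the holomorphy of $G$ and the reproducing kernel to collapse it directly to $\|F\|^2\|G\|^2$ times the advertised constant.
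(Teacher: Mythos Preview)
Your argument is correct but proceeds along a genuinely different path from the paper. The paper never transfers to the Fock space; instead it computes $\Pi_\lambda(\xi,\eta)g$ directly on $L^2(\R^{2n})$ using the explicit formulas for $\tau_\lambda$ and $e_\lambda$, and shows that
\[
\Pi_\lambda(\xi,\eta)g \;=\; \pi(\eta',-\xi')g,\qquad (\xi',\eta') = \Lambda(\xi,\eta),
\]
where $\pi$ is the Schr\"odinger representation of $\He^{2n}$ and $\Lambda$ is an explicit $4n\times 4n$ matrix with $\det\Lambda = (\lambda/\sinh\lambda)^{2n}$. The proposition then reduces, via this linear change of variables, to the classical Moyal identity for $\pi$. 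Your route---reproducing kernel for the distinguished window $G_0=1$, then Schur/Duflo--Moore to pin the constant for general $G$---is more conceptual and avoids the coordinate bookkeeping, but it imports the abstract orthogonality machinery and requires knowing the irreducibility of $\rho_\lambda$ from \cite{GT}. The paper's computation, by contrast, is entirely elementary and self-contained, and it yields as a byproduct the pointwise identity $\langle f,\Pi_\lambda(\xi,\eta)g\rangle = \langle f,\pi(\eta',-\xi')g\rangle$, which is reused immediately afterward to conclude that $M_\lambda^{p,p}(\R^{2n}) = M^{p,p}(\R^{2n})$ for all $\lambda$; your argument does not recover this finer information.
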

\begin{proof} Recall that $ \pi(\xi,\eta) = \pi_1(\xi,\eta,0) $ where $ \pi_\lambda $ are  the Schr\"odinger representations of $ \He^{2n} $ realised on $ L^2(\R^{2n}) .$ It is enough to show that 
\begin{equation}\label{matrix-coeff}
\langle f, \Pi_\lambda(\xi,\eta)g \rangle  = \langle f, \pi(\eta^\prime,-\xi^\prime)g \rangle,\,\, \, (\xi^\prime,\eta^\prime) = \Lambda (\xi,\eta)
\end{equation}
where the matrix $ \Lambda $ is defined  by
\begin{equation}\label{lambda-matrix} \Lambda = 
\begin{pmatrix}
     I_{2n}& \tanh(\lambda/2)\mathbf{J} \\
     (\lambda/2)\mathbf{J}& - (\lambda/2)\coth(\lambda/2)I_{2n} 
  \end{pmatrix}.
\end{equation}
In fact, once we have the above identity it follows that 
$$ \int_{\R^{2n}}\int_{\R^{2n}} |\langle f, \Pi_\lambda(\xi,\eta)g \rangle |^2\, d\xi\, d\eta =  \int_{\R^{2n}}\int_{\R^{2n}} |F\circ \Lambda(\xi,\eta)|^2\, d\xi\, d\eta $$
where $ F(\xi,\eta) = \langle f, \pi(\eta,-\xi)g \rangle.$ Since $ \mathbf{J}^2 = -I,$ an easy calculation shows that   the determinant of $ \Lambda $ is   $ (-\lambda/2)^{2n} \big( \coth (\lambda/2)-\tanh(\lambda/2)\big)^{2n} = \left( \frac{\lambda}{\sinh \lambda}\right)^{2n}$  and hence $ \Lambda $ is invertible. Therefore,
\begin{equation}
 \int_{\R^{2n}}\int_{\R^{2n}} |\langle f, \Pi_\lambda(\xi,\eta)g \rangle |^2\, d\xi\, d\eta =  |\det \Lambda|^{-1}\,  \int_{\R^{2n}}\int_{\R^{2n}} |F(\xi,\eta)|^2\, d\xi\, d\eta. 
 \end{equation}
But it is well known that the representation $ \pi_1 $ of $ \He^{2n} $ is square integrable modulo the center and 
$$\int_{\R^{2n}}\int_{\R^{2n}} |F(\xi,\eta)|^2\, d\xi\, d\eta =  (2\pi)^{2n} \left(\int_{\R^{2n}} |f(\xi)|^2\, d\xi \right) \left(\int_{\R^{2n}}|g(\xi)|^2\, d\xi \right).$$

Thus we are left with proving the identity \eqref{matrix-coeff}. 
With $ \xi =(x,u) $ and $ \eta =(y,v) $ let us compute the matrix coefficient
$$ \langle f, \Pi_\lambda(\xi,\eta)g\rangle=e^{-i\frac{\lambda}{2}(\coth \lambda)\xi\cdot\eta}\langle f, e_\lambda(\eta)\tau_\lambda(\xi)g\rangle.$$
Recalling the definitions of  $ \tau_\lambda(x,u)$ and $ e_\lambda(y,v) $ after a simple calculation we get
$$ e_\lambda(\eta)(\tau_\lambda(\xi)g)(a,b) = g(a-x^\prime,b-u^\prime)\, e^{i ( a \cdot y^\prime+ b\cdot v^\prime)}\, e^{-i\frac{\lambda}{2} \tanh (\lambda/2)( \xi \cdot \eta)}. $$
Here we have defined the new coordinates by  $ (x^\prime, u^\prime) = \big(x+\tanh (\lambda /2)\, v, u-\tanh (\lambda/2)\, y\big)$ and 
$ (y^\prime, v^\prime) = -(\lambda /2)\big(-u+\coth (\lambda/2)\,y, x+\coth (\lambda/2)\,v\big).$  If  we write $ \xi^\prime = (x^\prime, u^\prime), \eta^\prime = (y^\prime, v^\prime) ,$  then we see that  $ (\xi^\prime, \eta^\prime) = \Lambda(\xi,\eta) $ where the matrix $ \Lambda $ is as in the proposition.
We can now rewrite  the above expression for $ e_\lambda(\eta)\tau_\lambda(\xi)g $ in terms of ordinary translations and modulations and hence in terms of the representation $ \pi(\xi,\eta).$ By  making use of the relation 
$2\coth \lambda=\coth (\lambda/2)+\tanh (\lambda/2)$ and simplifying we get
$$  \Pi_\lambda(\xi,\eta)g(a,b) =   \pi(\eta^\prime,-\xi^\prime)g(a,b)  $$ 
and hence $  \langle f, \Pi_\lambda(\xi,\eta)g \rangle = \langle f, \pi(\eta^\prime,-\xi^\prime)g \rangle = F(\xi^\prime,\eta^\prime)$ proving the identity \eqref{matrix-coeff}.
\end{proof}

\subsection{Twisted modulation spaces}  Let $ L^{p,q}(\R^{2n} \times\R^{2n}) $ stand for the space of all measurable functions $ F(\xi,\eta)  $ on $ \R^{2n} \times \R^{2n} $ for which
$$ \|F\|_{p,q}^q  = \int_{\R^{2n}} \left( \int_{\R^{2n}} |F(\xi,\eta)|^p \,d\xi \right)^{q/p} d\eta $$
is finite.
We say that a tempered distribution  $ f $ on $ \R^{2n} $ belongs to the twisted modulation space $ M_\lambda^{p,q}(\R^{2n}) $ if the matrix coefficient 
$ \langle f, \Pi_\lambda(\xi,\eta)g \rangle $
 belongs to $ L^{p,q}(\R^{2n} \times\R^{2n}) $ for some Schwartz class function $ g.$ We equip  $ M_\lambda^{p,q}(\R^{2n} )$ with the norm 
$$ \|f\|_{(p,q)}^q  = \int_{\R^{2n}} \left( \int_{\R^{2n}} |\langle f, \Pi_\lambda(\xi,\eta)g \rangle|^p \,d\xi \right)^{q/p} d\eta.$$
In view of \eqref{matrix-coeff} we immediately see that $ M_\lambda^{p,p}(\R^{2n}) =M^{p,p}(\R^{2n})$  for any $ \lambda.$ But for $ p \neq q ,  M_\lambda^{p,q}(\R^{2n}) $ could be different from $ M^{p,q}(\R^{2n}).$
It follows from the definition of $ \Pi_\lambda $ that for $ f \in L^2(\R^{2n}) $ we have $$ \langle f, \Pi_\lambda(\xi,\eta)g \rangle = \langle B_\lambda f, \rho_\lambda(\xi+i\eta)B_\lambda g \rangle $$ and hence $ f \in M_\lambda^{p,q}(\R^{2n}) \cap L^2(\R^{2n}) $ if and only if $ \langle B_\lambda f, \rho_\lambda(\xi+i\eta)B_\lambda g \rangle $ belongs to $ L^{p,q}(\R^{2n} \times\R^{2n}) $ for some Schwartz class function $ g.$ This relation is very useful in studying certain properties of the twisted modulation spaces.

\begin{thm}\label{independence} The definition of $ M^{p,q}_\lambda(\R^{2n}) $ is independent of $ g .$ That is to say if $ \langle f, \Pi_\lambda(\xi,\eta)g \rangle $ belongs to $ L^{p,q}(\R^{2n} \times\R^{2n}) $ for some $ g $ then $ \langle f, \Pi_\lambda(\xi,\eta)g^\prime \rangle $ also belongs to $ L^{p,q}(\R^{2n} \times\R^{2n}) $ for any other $ g^\prime.$ Different windows $ g $ define equivalent norms.
\end{thm}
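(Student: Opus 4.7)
The plan is to adapt the classical Gr\"ochenig strategy to the twisted setting. Write $V_g f(\zeta):=\langle f,\Pi_\lambda(\zeta)g\rangle$ for the matrix coefficient, viewed as a function on $\C^{2n}\simeq\R^{4n}$.

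The first step is to polarise the $L^2$-identity of Proposition \ref{square-int} to produce a twisted Moyal orthogonality relation
\begin{equation*}
\int_{\C^{2n}} V_{g_1}f_1(\zeta)\,\overline{V_{g_2}f_2(\zeta)}\,d\zeta \;=\; d_n c_\lambda^{-2}\,\langle f_1,f_2\rangle\,\overline{\langle g_1,g_2\rangle},
\end{equation*}
valid for all $f_i,g_i\in L^2(\R^{2n})$. Fixing a Schwartz window $g$ with $\|g\|_2=1$, this yields the weak reproducing formula $\langle f,h\rangle = d_n c_\lambda^{-2}\int V_g f\cdot\overline{V_g h}\,d\zeta$. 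Specialising $h=\Pi_\lambda(\zeta_0)g'$ and using that $\overline{V_g h(\zeta)} = \overline{\langle \Pi_\lambda(\zeta_0)g',\Pi_\lambda(\zeta)g\rangle}$, I obtain
\begin{equation*}
V_{g'}f(\zeta_0)\;=\;d_n c_\lambda^{-2}\int_{\C^{2n}} V_g f(\zeta)\,\overline{\langle \Pi_\lambda(\zeta_0)g',\Pi_\lambda(\zeta)g\rangle}\,d\zeta.
\end{equation*}

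Next I would invoke the composition law \eqref{Pi_lambda composition formula intro}. Its phase vanishes at $\zeta'=-\zeta$, so $\Pi_\lambda(\zeta)^{-1}=\Pi_\lambda(-\zeta)$, and therefore $\Pi_\lambda(\zeta)^{\ast}\Pi_\lambda(\zeta_0)=\Pi_\lambda(\zeta_0-\zeta)$ up to a factor of unit modulus. Taking absolute values gives $|\langle \Pi_\lambda(\zeta_0)g',\Pi_\lambda(\zeta)g\rangle|=|V_{g'}g(\zeta_0-\zeta)|$, and the reproducing identity collapses to the pointwise convolution estimate
\begin{equation*}
|V_{g'}f(\zeta_0)|\;\leq\;d_n c_\lambda^{-2}\,\bigl(|V_g f|\ast|V_{g'}g|\bigr)(\zeta_0)
\end{equation*}
on $\R^{4n}$. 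Young's inequality in the mixed-norm space $L^{p,q}(\R^{2n}\times\R^{2n})$ with an $L^1$ kernel then gives $\|V_{g'}f\|_{p,q}\leq C_{n,\lambda}\,\|V_g f\|_{p,q}\,\|V_{g'}g\|_{L^1(\R^{4n})}$. The $L^1$-finiteness of $V_{g'}g$ for $g,g'\in\mathcal S$ follows from \eqref{matrix-coeff}, which expresses $V_{g'}g$ as the classical short-time Fourier transform of $g'$ against $g$ composed with the invertible linear map $\Lambda$ of \eqref{lambda-matrix}; the classical STFT of two Schwartz functions is itself Schwartz, and composition with $\Lambda$ preserves this. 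Exchanging $g$ and $g'$ gives the reverse inequality, yielding both independence of the window and equivalence of the associated norms.

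The main obstacle is that the theorem is phrased for tempered distributions whereas the Moyal/reproducing argument naturally lives in $L^2(\R^{2n})$. I would handle this by fixing a Schwartz window $g$ and noting that the hypothesis $V_g f\in L^{p,q}$ already forces $V_g f$ to be a locally integrable function on $\R^{4n}$ (the pairing is legitimate since $\Pi_\lambda(\zeta)g\in\mathcal S$). The reproducing formula can then be reinterpreted weakly, and $f$ approximated by $L^2$ vectors obtained by truncating the reproducing integral against a smooth cut-off in $\zeta$; since the pointwise convolution bound for $|V_{g'}f|$ is stable under such approximations, the final $L^{p,q}$ estimate survives. The Schwartz continuity of $\zeta\mapsto\Pi_\lambda(\zeta)g$ needed along the way is routine from the explicit factorisation \eqref{pi-rep} of $\Pi_\lambda$ into twisted translations, twisted modulations and a smooth phase.
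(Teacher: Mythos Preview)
Your proposal is correct and matches the paper's approach: polarise Proposition~\ref{square-int} to obtain the orthogonality relation, derive a reproducing formula, use the composition law for $\Pi_\lambda$ to dominate $|V_{g'}f|$ pointwise by the convolution $|V_g f|\ast|V_{g'}g|$, and conclude by Young's inequality together with the Schwartz property of $V_{g'}g$ (read off from \eqref{matrix-coeff}). The only cosmetic difference is that the paper packages these steps in a preparatory lemma (Lemma~\ref{polarised-id}) via an explicit adjoint operator $\widetilde{V_g^\lambda}$, and handles the passage to tempered distributions by dualising through Schwartz test functions (writing the test function via the inversion formula and then pairing with $f$) rather than by your truncation/approximation argument.
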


To prove this result, we need some preparation.  Let us define $V_g^\lambda f(\xi, \eta)=\langle f, e_\lambda(\eta)\tau_\lambda(\xi)g\rangle$ which is the twisted analogue of the short-time Fourier transform used in the classical theory of modulation spaces. In view of \eqref{pi-rep} it follows that $ f \in M^{p,q}_\lambda(\R^{2n}) $ if and only if  $ V_g^\lambda f \in  L^{p,q}(\R^{2n} \times\R^{2n}). $ Moreover, the identity in Proposition \ref{square-int} holds for $ V_g^\lambda f $ which after polarization gives 
\begin{equation}\label{orthogonality relation}
c_\lambda^{2} \,\langle V_g^\lambda f, V_{g^\prime}^\lambda h\rangle=   d_n\,  \langle f, h\rangle \langle g^\prime, g\rangle.
\end{equation}
We use the above identity  to obtain an inversion formula for $V_g^\lambda$. Towards this end, we define the following operator on $L^{p,q}(\R^{2n} \times\R^{2n}) $ which takes a function $ F $ into the functional
$$\langle \widetilde{V_g^\lambda}F, \varphi\rangle = \int_{\R^{2n}}\int_{\R^{2n}}F(\xi,\eta)\, \langle e_\lambda(\eta)\tau_\lambda(\xi) g, \varphi\rangle \,d\xi \, d\eta,$$
for  $\varphi\in \mathcal{S}(\R^{2n})$. In the following lemma we record some important properties of  $\widetilde{V_g^\lambda}$.

\begin{lem}\label{polarised-id} Let $1\leq p,q\leq \infty$ and $g, h \in \mathcal{S}(\R^{2n})$. Then
 $V_h^\lambda \circ \widetilde{V_g^\lambda}$ maps $L^{p,q}(\R^{2n} \times\R^{2n}) $ into itself and the following inequality holds:
\begin{equation}\label{norm-est}\| V_h^\lambda \circ \widetilde{V_g^\lambda}F\|_{p,q}\leq \|F\|_{p,q}\|V_g^\lambda h\|_{1,1}. \end{equation}
In particular, when $ F = V_g^\lambda f,$  we have the inversion formula
\begin{equation}\label{invers} 
c_\lambda^{2}\, \,\widetilde{V_{h}^\lambda}(V_g^\lambda f) =  d_n \,   \langle h, g\rangle \, f .\end{equation}
\end{lem}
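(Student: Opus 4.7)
The plan is to recognise $V_h^\lambda\circ \widetilde{V_g^\lambda}$ as a convolution operator on $\R^{2n}\times \R^{2n}$ whose kernel is (a reflection of) the integrable function $|V_g^\lambda h|$, which will immediately produce the bound \eqref{norm-est}. The inversion formula \eqref{invers} will then follow by a duality argument straight from the orthogonality relation \eqref{orthogonality relation}.

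First, a formal Fubini interchange in the definitions of $V_h^\lambda$ and $\widetilde{V_g^\lambda}$ gives
\[
V_h^\lambda\bigl(\widetilde{V_g^\lambda}F\bigr)(\xi',\eta') = \iint F(\xi,\eta)\,\langle e_\lambda(\eta)\tau_\lambda(\xi)g,\, e_\lambda(\eta')\tau_\lambda(\xi')h\rangle\, d\xi\, d\eta .
\]
Using the factorisation \eqref{pi-rep} to replace $e_\lambda(\eta)\tau_\lambda(\xi)$ by $\Pi_\lambda(\xi,\eta)$ up to a unimodular phase, together with the unitarity $\Pi_\lambda(\zeta)^\ast = \Pi_\lambda(-\zeta)$ (which one checks from \eqref{Pi_lambda composition formula intro} using that $\Re[\zeta,\bar\zeta]=0$) and the composition rule \eqref{Pi_lambda composition formula intro}, the inner product inside the integral equals a unimodular factor times $\langle g, \Pi_\lambda((\xi',\eta')-(\xi,\eta))h\rangle$. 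Its modulus is therefore $|V_g^\lambda h\bigl((\xi,\eta)-(\xi',\eta')\bigr)|$, and one is left with the pointwise estimate
\[
|V_h^\lambda(\widetilde{V_g^\lambda}F)(\xi',\eta')| \leq \iint |F(\xi,\eta)|\, |V_g^\lambda h\bigl((\xi,\eta)-(\xi',\eta')\bigr)|\, d\xi\, d\eta .
\]
This is an ordinary convolution on $\R^{4n}$ of $|F|$ with the reflected kernel $\zeta\mapsto |V_g^\lambda h(-\zeta)|$, whose $L^1$-norm equals $\|V_g^\lambda h\|_{1,1}$. A mixed-norm Young inequality, obtained from two applications of Minkowski's integral inequality, then yields \eqref{norm-est}.

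For the inversion formula, the idea is to test $\widetilde{V_h^\lambda}(V_g^\lambda f)$ against an arbitrary $\varphi \in \mathcal{S}(\R^{2n})$. By the definition of $\widetilde{V_h^\lambda}$,
\[
\langle \widetilde{V_h^\lambda}(V_g^\lambda f), \varphi\rangle = \iint V_g^\lambda f(\xi,\eta)\,\overline{V_h^\lambda \varphi(\xi,\eta)}\, d\xi\, d\eta = \langle V_g^\lambda f,\, V_h^\lambda \varphi\rangle_{L^2(\R^{4n})} ,
\]
and applying \eqref{orthogonality relation} with $(g,h)$ in place of $(g,g')$ and $(f,\varphi)$ in place of $(f,h)$ turns the right-hand side into $d_n c_\lambda^{-2}\,\langle h, g\rangle\, \langle f,\varphi\rangle$. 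Since $\varphi$ is arbitrary, this is exactly \eqref{invers}.

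The main technical nuisance is justifying the Fubini interchange used in computing $V_h^\lambda(\widetilde{V_g^\lambda}F)$ when $F \in L^{p,q}$ is merely locally integrable and $\widetilde{V_g^\lambda}F$ is only a distribution a priori. The cleanest remedy is to first establish the pointwise identity and the convolution bound for $F \in \mathcal{S}(\R^{4n})$ (where absolute convergence is trivial), and then extend to general $F \in L^{p,q}$ by a density argument, with the continuity furnished by \eqref{norm-est} itself closing the loop.
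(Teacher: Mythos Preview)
Your approach for the norm estimate matches the paper's exactly: both derive the pointwise convolution bound $|V_h^\lambda\circ\widetilde{V_g^\lambda}F|\le |F|\ast|V_g^\lambda h|$ via the factorisation \eqref{pi-rep}, the relation $\Pi_\lambda(\zeta)^\ast=\Pi_\lambda(-\zeta)$ and the composition rule, and then apply Young's inequality for mixed norms.

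For the inversion formula there is a small but genuine gap. You apply the orthogonality relation \eqref{orthogonality relation} directly to the pairing $\langle V_g^\lambda f,\,V_h^\lambda\varphi\rangle$, but that relation was obtained from Proposition~\ref{square-int} and is valid only for $L^2$ arguments. Since the lemma is meant to cover $f$ a tempered distribution (so that it can be used for $f\in M_\lambda^{p,q}$ in the proof of Theorem~\ref{independence}), your step is unjustified when $f\notin L^2(\R^{2n})$. The paper handles this by first using \eqref{orthogonality relation} only where both entries are Schwartz, obtaining the pointwise identity $d_n\,c_\lambda^{-2}\langle g,h\rangle\,\varphi=\widetilde{V_g^\lambda}(V_h^\lambda\varphi)$ for $\varphi\in\mathcal S(\R^{2n})$, and then pairing both sides with the distribution $f$; the interchange of $f$ with the defining integral of $\widetilde{V_g^\lambda}$ is legitimate because $g$, $h$ and $\varphi$ are all Schwartz. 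Adding this duality step to your argument closes the gap.
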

\begin{proof} Since $ V_g^\lambda h $ can be written in terms of the classical short-time Fourier transform of $ g $ and $ h $ it is clear that $ V_g^\lambda h \in \mathcal{S}(\R^{2n}\times \R^{2n}).$ Note that $ |V_h^\lambda g(\xi,\eta)| =
| \langle g, \Pi_\lambda(\xi,\eta)h \rangle|$ and hence it is enough to consider 
$$ \langle \widetilde{V_g^\lambda} F, \Pi_\lambda(\xi,\eta)h \rangle =  \int_{\R^{2n}}\int_{\R^{2n}}F(\xi^\prime,\eta^\prime)\, \langle e_\lambda(\eta^\prime)\tau_\lambda(\xi^\prime) g, \Pi_\lambda(\xi,\eta)h\rangle \,d\xi^\prime \, d\eta^\prime.$$
Using  \eqref{pi-rep} and the fact that $ \Pi_\lambda(\xi,\eta)^\ast = \Pi_\lambda(-\xi,-\eta) $ we can rewrite the above as
$$  \int_{\R^{2n}}\int_{\R^{2n}}F(\xi^\prime,\eta^\prime) e^{i \frac{\lambda}{2} (\xi^\prime \cdot \eta^\prime)}\, \langle g, \Pi_\lambda(-\xi^\prime,-\eta^\prime)\Pi_\lambda(\xi,\eta)h\rangle \,d\xi^\prime \, d\eta^\prime.$$
As $ \Pi_\lambda $ is a projective representation of $ \R^{2n} \times \R^{2n} $ from the above we obtain
\begin{equation}\label{STFT pointise estimate} | V_h^\lambda \circ \widetilde{V_g^\lambda}F(\xi,\eta)| \leq  |F| \ast |V_g^\lambda h|(\xi,\eta).
\end{equation}
This yields \eqref{norm-est} by Young's inequality for mixed norm spaces.  In order to prove \eqref{invers} it is enough to show that
$$c_\lambda^{2} \,\, \langle \widetilde{V_{h}^\lambda}(V_g^\lambda f), \varphi \rangle  = d_n \,  \langle h, g\rangle \, \langle  f, \varphi \rangle  $$
for all $ \varphi \in \mathcal{S}(\R^{2n}).$ But this is certainly true for $ f \in L^2(\R^{2n}) $ as a consequence of \eqref{orthogonality relation}. Thus for  all $ \varphi \in  \mathcal{S}(\R^{2n})$ we have the inversion formula
$$  d_n \,c_\lambda^{-2}\,  \langle g, h\rangle \, \varphi =  \widetilde{V_{g}^\lambda}(V_h^\lambda \varphi)  = \int_{\R^{2n}}\int_{\R^{2n}} V_h^\lambda \varphi(\xi^\prime,\eta^\prime)\, 
e_\lambda(\eta^\prime)\tau_\lambda(\xi^\prime) g \,d\xi^\prime \, d\eta^\prime.$$ 
From the above by taking inner product with $ f $ and simplifying we get 
$$  d_n \,c_\lambda^{-2}\,  \langle h, g\rangle \, \langle f,\varphi \rangle  = \int_{\R^{2n}}\int_{\R^{2n}} V_g^\lambda f(\xi^\prime,\eta^\prime)\, 
\langle e_\lambda(\eta^\prime)\tau_\lambda(\xi^\prime) h, \varphi \rangle \,d\xi^\prime \, d\eta^\prime.$$
The change of order of integration is justified as $ g, h $ and $ \varphi $ are all Schwartz. As the right hand side of the above is nothing but $\langle \widetilde{V_{h}^\lambda}(V_g^\lambda f), \varphi \rangle$ \eqref{invers} is proved.
\end{proof}

{\bf Proof of Theorem \ref{independence}:} Assume that $ F = V_g^\lambda f \in L^{p,q}(\R^{2n} \times\R^{2n}) $ and take any Schwartz function $ g^\prime .$  By  \eqref{invers} we have $ d_n\, \|g\|_2^2 \, \,f = 
c_\lambda^2\,\, \widetilde{V_{g}^\lambda}(V_g^\lambda f)$ and hence using \eqref{norm-est} we get
$$ d_n\, \|g\|_2^2 \, \| V_{g^\prime}^\lambda f\|_{p,q} \leq  c_\lambda^2\,  \|V_g^\lambda f\|_{p,q}\|V_g^\lambda g^\prime \|_{1,1}. $$
This proves that $ V_{g^\prime}^\lambda f \in L^{p,q}(\R^{2n} \times\R^{2n}) $ and hence the definition of $ M_\lambda^{p,q}(\R^{2n}) $ independent of $ g.$ Moreover, it is clear that $ \| V_{g^\prime}^\lambda f\|_{p,q}$ is equivalent to $\| V_{g}^\lambda f\|_{p,q}.$\\

An interesting consequence of Theorem \ref{independence} is the following result which is a very useful alternate definition of $ M_\lambda^{p,q}(\R^{2n}) .$ Let  $ w_\lambda(\xi+i\eta) $ which appears in the definition of $ \Fs.$ \\

\begin{cor} \label{matrix-bargmann}A function $ f \in M_\lambda^{p,q}(\R^{2n}) \cap L^2(\R^{2n})  $ if and only if  the function  $ F $ defined by
$  F(\xi,\eta) = \sqrt{w_\lambda(\xi+i\eta)}\, \, B_\lambda f(\xi+i\eta)  $ belongs to $L^{p,q}(\R^{2n} \times\R^{2n}) .$
\end{cor}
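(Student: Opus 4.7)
The plan is to exploit the window-independence from Theorem \ref{independence} by choosing the special window $g = p_{1/2}^\lambda$, for which the matrix coefficient $\langle f, \Pi_\lambda(\xi,\eta) g\rangle$ collapses almost to $B_\lambda f$ via the reproducing property of $\Fs$. Once the identification
\[
d_n\,\sqrt{c_\lambda}\,\langle f, \Pi_\lambda(\xi+i\eta)\,p_{1/2}^\lambda\rangle \;=\; \sqrt{w_\lambda(\xi+i\eta)}\, B_\lambda f(\xi+i\eta)
\]
is established, the corollary is immediate from the definition of $M_\lambda^{p,q}(\R^{2n})$.

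The first step is to move to the Fock side: since $B_\lambda$ is unitary and $\Pi_\lambda(\zeta) = B_\lambda^\ast \circ \rho_\lambda(\zeta) \circ B_\lambda$, one has
\[
\langle f,\Pi_\lambda(\zeta) g\rangle_{L^2(\R^{2n})} \;=\; \langle B_\lambda f,\rho_\lambda(\zeta)\,B_\lambda g\rangle_{\Fs}.
\]
Next, I compute $B_\lambda p_{1/2}^\lambda$ using the formula $B_\lambda h = c_\lambda\,(p_1^\lambda)^{-1}\, h\ast_\lambda p_{1/2}^\lambda$ together with the semigroup identity $p_{1/2}^\lambda \ast_\lambda p_{1/2}^\lambda = p_1^\lambda$; this yields $B_\lambda p_{1/2}^\lambda \equiv c_\lambda$, the constant function, whose holomorphic extension to $\C^{2n}$ is still $c_\lambda$.

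The heart of the argument is then to evaluate $\rho_\lambda(\zeta)\mathbf{1}$ using the explicit formula \eqref{fock-rep}: with $F\equiv 1$ the translated-argument factor disappears and what remains is
\[
\rho_\lambda(\zeta)\mathbf{1}(\zeta') \;=\; e^{\frac{\lambda}{2}(\coth\lambda)\,(\zeta'\cdot\overline{\zeta})}\, e^{-i\frac{\lambda}{2}[\zeta',\overline{\zeta}]}\, e^{-\frac{\lambda}{4}(\coth\lambda)|\zeta|^2}\, e^{\frac{\lambda}{2}[\Re\zeta,\,\Im\zeta]}.
\]
Using antisymmetry of $[\cdot,\cdot]$ to rewrite $-[\zeta',\overline{\zeta}]=[\overline{\zeta},\zeta']$, the $\zeta'$-dependent part matches precisely the reproducing kernel \eqref{rep-ker} of $\Fs$ up to the constant $d_n c_\lambda$, while the $\zeta$-dependent prefactor $e^{-\frac{\lambda}{4}(\coth\lambda)|\zeta|^2}\, e^{\frac{\lambda}{2}[\Re\zeta,\,\Im\zeta]}$ is exactly $c_\lambda^{-1/2}\sqrt{w_\lambda(\zeta)}$ by the definition of $w_\lambda$. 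Combining these identifications gives
\[
\rho_\lambda(\zeta)\,B_\lambda p_{1/2}^\lambda \;=\; c_\lambda\cdot\rho_\lambda(\zeta)\mathbf{1} \;=\; d_n^{-1}c_\lambda^{-1/2}\sqrt{w_\lambda(\zeta)}\,K_\zeta,
\]
and the reproducing property $\langle B_\lambda f, K_\zeta\rangle_{\Fs} = B_\lambda f(\zeta)$ finishes the identification displayed above.

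With this identity in hand, membership of $f$ in $M_\lambda^{p,q}(\R^{2n})$, tested against the window $g = p_{1/2}^\lambda$, is literally the condition $\sqrt{w_\lambda(\xi+i\eta)}\,B_\lambda f(\xi+i\eta)\in L^{p,q}(\R^{2n}\times\R^{2n})$, up to the harmless constant $d_n\sqrt{c_\lambda}$. The window-independence from Theorem \ref{independence} legitimizes this choice of $g$. The only real obstacle is the bookkeeping in step three: matching the exponential prefactors of $\rho_\lambda(\zeta)\mathbf{1}$ against both $K_\zeta$ and $\sqrt{w_\lambda(\zeta)}$ requires careful use of the antisymmetry of $[\cdot,\cdot]$ and of the definition $w_\lambda(\zeta) = c_\lambda\,e^{-\frac{\lambda}{2}(\coth\lambda)|\zeta|^2}\, e^{\lambda[\Re\zeta,\,\Im\zeta]}$, but no conceptual difficulty arises.
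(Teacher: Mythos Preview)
Your proof is correct and follows essentially the same route as the paper: choose the window $g=p_{1/2}^\lambda$, move to the Fock side via unitarity of $B_\lambda$, use $B_\lambda p_{1/2}^\lambda = c_\lambda$, compute $\rho_\lambda(\zeta)\mathbf{1}$ from \eqref{fock-rep}, recognize the $\zeta'$-dependent factor as $(d_n c_\lambda)^{-1}K_\zeta(\zeta')$ and the prefactor as $c_\lambda^{-1/2}\sqrt{w_\lambda(\zeta)}$, and invoke the reproducing property. The only cosmetic difference is that the paper writes out $\langle B_\lambda f,\rho_\lambda(\zeta)1\rangle$ as an integral before identifying the reproducing kernel, whereas you identify $\rho_\lambda(\zeta)\mathbf{1}$ as a multiple of $K_\zeta$ first; the resulting identity $d_n\sqrt{c_\lambda}\,\langle f,\Pi_\lambda(\zeta)p_{1/2}^\lambda\rangle = \sqrt{w_\lambda(\zeta)}\,B_\lambda f(\zeta)$ is the same.
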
 
\begin{proof} In view of the above theorem, $ f \in M_\lambda^{p,q}(\R^{2n}) $ if and only if $ \langle f, \Pi_\lambda(\xi,\eta) p_{1/2}^\lambda \rangle $ belongs to  $L^{p,q}(\R^{2n} \times\R^{2n}) .$ As $ B_\lambda $ is unitary, $ B_\lambda \,p_{1/2}^\lambda = c_\lambda $  and $ \Pi_\lambda(\xi,\eta) = B_\lambda^\ast \circ \rho_\lambda(\zeta)\circ B_\lambda,\,  $ we have $ \langle f, \Pi_\lambda(\xi,\eta) p_{1/2}^\lambda \rangle = c_\lambda\,  \langle B_\lambda f, \rho_\lambda(\zeta)1 \rangle $  where $\zeta = \xi+i\eta.$  From the definition of $ \rho_\lambda(\zeta) $ given in \eqref{fock-rep} we see that $\langle B_\lambda f, \rho_\lambda(\zeta)1 \rangle $ is given by the integral 
$$  e^{-\frac{\lambda}{4} (\coth \lambda)|\zeta|^2}\,  e^{\frac{\lambda}{2} [ \Re \zeta,\,\Im \zeta ]}\,\, \int_{\C^{2n}} B_\lambda f(\zeta^\prime) e^{ \frac{\lambda}{2} (\coth \lambda)\, (\zeta \cdot \overline{\zeta^\prime})}\, e^{-i\frac{\lambda}{2}\,  [\zeta, \overline{\zeta^\prime}]}\, w_\lambda(\zeta^\prime)\, d\zeta^\prime
.$$
Recalling  the expression for the reproducing kernel  for $ \Fs $ given in  \eqref{rep-ker}, namely
$$ d_n\, c_\lambda\,\, e^{ \frac{\lambda}{2} (\coth \lambda)\, (\zeta \cdot \overline{\zeta^\prime})}\, e^{-i\frac{\lambda}{2}\,  [\zeta, \overline{\zeta^\prime}]} =  \, \overline{K_\zeta(\zeta^\prime}),$$ 
the above integral is seen to be just $ B_\lambda f(\zeta) .$  Therefore,
 $$  d_n\, c_\lambda\,\, \langle B_\lambda f, \rho_\lambda(\zeta)1 \rangle = \,  B_\lambda f(\zeta)\, e^{-\frac{\lambda}{4} (\coth \lambda)|\zeta|^2}\,  e^{\frac{\lambda}{2} [ \Re \zeta,\,\Im \zeta ]}.$$ 
 Recalling the definition of the weight function $ w_\lambda $ we can rewrite the above as 
$$ d_n\,\, c_\lambda\, \sqrt{c_\lambda}\, \langle B_\lambda f, \rho_\lambda(\zeta)1 \rangle =  B_\lambda f(\zeta)\,\sqrt{w_\lambda(\zeta)}.$$
This proves the corollary.
\end{proof}

\begin{rem} We record  the following relation established above for future use:
\begin{equation}\label{matrix-barg}  d_n\, \sqrt{c_\lambda} \, \langle f, \Pi_\lambda(\xi,\eta) p_{1/2}^\lambda \rangle =  B_\lambda f(\zeta)\,\sqrt{w_\lambda(\zeta)} .
\end{equation}   
From the above relation we immediately obtain the identity
$$  c_\lambda \, \int_{\R^{2n}}\int_{\R^{2n}}  |\langle f, \Pi_\lambda(\xi,\eta) p_{1/2}^\lambda \rangle|^2\,d\xi\, d\eta = \int_{\R^{2n}} |f(x,u)|^2 dx \, du$$
which plays an important role in the definition of modulation spaces on the Heisenberg group.
\end{rem}

In the following theorem we list  some basic properties of the twisted modulation spaces.

\begin{thm}\label{lambda Msp prop} (i) The space  $ M_\lambda^{p,q}(\R^{2n})$ is a Banach space for any $ 1 \leq p, q \leq \infty.$ (ii) When $ p$ and $ q $ are finite the Schwartz space is dense in  $ M_\lambda^{p,q}(\R^{2n})$ and the dual is given by $ M_\lambda^{p^\prime,q^\prime}(\R^{2n})$ (iii) The inclusion
    $$M_\lambda^{p_1,q_1}(\R^{2n})\subseteq M_\lambda^{p_2,q_2}(\R^{2n}).$$
holds whenever  $1\leq p_1\leq p_2\leq\infty$, $1\leq q_1\leq q_2\leq\infty$. 
\end{thm}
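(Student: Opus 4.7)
All three statements are standard coorbit-type properties, and the plan is to deduce them from a single piece of input: the map $V_g^\lambda$ embeds $M_\lambda^{p,q}(\R^{2n})$ isometrically (up to a constant depending on $g$) into $L^{p,q}(\R^{2n}\times\R^{2n})$, and the inversion formula \eqref{invers} together with the norm estimate \eqref{norm-est} of Lemma \ref{polarised-id} says precisely that $c_\lambda^2 d_n^{-1}\|g\|_2^{-2}\,\widetilde{V_g^\lambda}$ is a bounded left-inverse on the level of $L^{p,q}$. Throughout I fix a Schwartz window $g$ with $\|g\|_2=1$; by Theorem \ref{independence} every other admissible window yields an equivalent norm, so nothing is lost.

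For part (i), positive definiteness of $\|\cdot\|_{(p,q)}$ is the only non-obvious norm axiom, and it follows from \eqref{invers}: if $V_g^\lambda f=0$ then $f=c_\lambda^2 d_n^{-1}\widetilde{V_g^\lambda}(V_g^\lambda f)=0$. For completeness, a Cauchy sequence $\{f_n\}$ in $M_\lambda^{p,q}$ gives a Cauchy sequence $F_n=V_g^\lambda f_n$ in $L^{p,q}$; set $F=\lim F_n$ and define $f=c_\lambda^2 d_n^{-1}\widetilde{V_g^\lambda}F$ as a tempered distribution using the pairing against $\mathcal{S}(\R^{2n})$ in Lemma \ref{polarised-id}. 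Then $V_g^\lambda f$ equals $F$ because pairing with any Schwartz function $\varphi$ and using the extended orthogonality relation \eqref{orthogonality relation} shows $\langle V_g^\lambda f,V_g^\lambda\varphi\rangle=d_n c_\lambda^{-2}\langle f,\varphi\rangle$, which by continuity passes to the $L^{p,q}$-limit of $F_n$. Hence $f_n\to f$ in $M_\lambda^{p,q}$.

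For part (ii), to prove density when $p,q<\infty$ I approximate $F=V_g^\lambda f$ in $L^{p,q}$ by functions $F_k\in C_c^\infty(\R^{2n}\times\R^{2n})$, then set $f_k=c_\lambda^2 d_n^{-1}\widetilde{V_g^\lambda}F_k$; these are Schwartz because $f_k$ is a compactly supported integral of functions of the form $e_\lambda(\eta)\tau_\lambda(\xi)g$, and by \eqref{norm-est} together with $V_g^\lambda g\in\mathcal{S}(\R^{4n})$, we get $\|f-f_k\|_{(p,q)}\lesssim\|F-F_k\|_{p,q}\to 0$. For duality, the pairing $\langle f,h\rangle$ between $f\in M_\lambda^{p,q}$ and $h\in M_\lambda^{p',q'}$ is well defined via polarization of \eqref{orthogonality relation}, giving $\langle f,h\rangle=d_n^{-1}c_\lambda^2\langle V_g^\lambda f,V_g^\lambda h\rangle$; H\"older's inequality on $L^{p,q}$ then bounds it by $C\|f\|_{(p,q)}\|h\|_{(p',q')}$. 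The surjectivity of this identification onto $(M_\lambda^{p,q})^*$ is obtained by the Hahn--Banach extension of a bounded linear functional from the closed subspace $V_g^\lambda M_\lambda^{p,q}\subset L^{p,q}$ to all of $L^{p,q}$, which is represented by some $G\in L^{p',q'}$; one then checks $h=c_\lambda^2 d_n^{-1}\widetilde{V_g^\lambda}G\in M_\lambda^{p',q'}$ produces the functional.

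Part (iii) follows from the convolution estimate \eqref{STFT pointise estimate}: writing $f=c_\lambda^2 d_n^{-1}\widetilde{V_g^\lambda}(V_g^\lambda f)$ yields
\[
|V_g^\lambda f(\xi,\eta)|\le C_\lambda\,|V_g^\lambda f|\ast|V_g^\lambda g|(\xi,\eta),
\]
and since $V_g^\lambda g\in\mathcal{S}(\R^{4n})\subset L^{r,s}(\R^{2n}\times\R^{2n})$ for every $1\le r,s\le\infty$, Young's inequality for mixed-norm spaces with $1+1/p_2=1/p_1+1/r$ and $1+1/q_2=1/q_1+1/s$ (these Young exponents satisfy $r,s\ge 1$ exactly when $p_1\le p_2$ and $q_1\le q_2$) gives $\|V_g^\lambda f\|_{p_2,q_2}\le C\|V_g^\lambda f\|_{p_1,q_1}$, proving the inclusion. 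The main point requiring care throughout is the extension of the inversion formula \eqref{invers} from $L^2$ (where it was established via polarization of square integrability) to the distributional setting needed in (i) and (ii); the bounded mapping in Lemma \ref{polarised-id} combined with density of Schwartz functions makes this routine but needs to be spelled out.
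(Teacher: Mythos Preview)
Your proposal is correct and follows essentially the same route as the paper's proof: both arguments run the coorbit machinery through Lemma~\ref{polarised-id}, using the inversion formula \eqref{invers} and the bound \eqref{norm-est} to reduce completeness to that of $L^{p,q}$, using $\widetilde{V_g^\lambda}$ of compactly supported data for density, and using \eqref{STFT pointise estimate} plus Young's inequality in mixed norms for the inclusion; the paper treats duality by referring to the classical argument in Gr\"ochenig's book, which is exactly the Hahn--Banach extension from the closed subspace $V_g^\lambda M_\lambda^{p,q}\subset L^{p,q}$ that you outline. One small point: in your completeness argument you claim $V_g^\lambda f=F$, but the cleaner way (and what the paper does) is simply to observe $V_g^\lambda(f_k-f)=V_g^\lambda\circ\widetilde{V_g^\lambda}(F_k-F)$ and apply \eqref{norm-est} directly, which avoids having to justify that the limit $F$ lies in the range of $V_g^\lambda$.
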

\begin{proof} The proofs are very similar to the classical case. For example, to  see the completeness of $  M_\lambda^{p,q}(\R^{2n})$  let $ (f_k) $ be a Cauchy sequence. Then by definition, $ F_k = V_g^\lambda f_k $ is Cauchy in  $L^{p,q}(\R^{2n} \times\R^{2n}) $ where $ g $ is a fixed Schwartz function 
with $ \|g\|_2 =1.$ By the completeness of  $L^{p,q}(\R^{2n} \times\R^{2n}),$ we get $ F \in  L^{p,q}(\R^{2n} \times\R^{2n}) $ to which the sequence $ (F_k) $ converges. In view of \eqref{invers} we have $ c_\lambda\,  f_k =  \widetilde{V} _g^\lambda F_k $ and by defining $ c_\lambda f = \widetilde{V} _g^\lambda F $  we see that 
$$ c_\lambda\,  \| V_g^\lambda (f_k-f) \|_{p,q} = c_\lambda\,  \| V_g^\lambda \circ \widetilde{V}_g^\lambda (F_k-F) \|_{p,q} \leq C\, \|F_k -F\|_{p,q} $$
in view of the estimate \eqref{norm-est}. This proves that $ (f_k) $ converges to $ f $ in $  M_\lambda^{p,q}(\R^{2n}).$ The duality is proved as in the classical case. We refer to \cite{KG} for the details. To prove  the density of the Schwartz space, we require the easy to prove fact that $ \widetilde{V}_g^\lambda F $ is Schwartz whenever $ F $ is compactly supported or rapidly decreasing. In order to see the inclusion, take $f\in M^{p_1,q_1}_\lambda(\R^{2n})$. Then using the inversion formula \eqref{invers} and the estimate \eqref{STFT pointise estimate}, we get
$$ |V_g^\lambda f(\xi,\eta)|\leq c_n\,c_\lambda^2\,\langle g, g\rangle^{-1} |V_g^\lambda f|\ast|V_g^\lambda g|(\xi,\eta). $$
Now choose $ 1\leq r,s\leq \infty$ such that
$$ 1/p_1+1/r=1+1/p_2, ~~\text{ and } 1/q_1+1/s=1+1/q_2. $$
By Young's inequality for mixed Lebesgue spaces we obtain
\begin{equation}\label{lambda inclusion estimate}
 \|V_g^\lambda f\|_{p_2,q_2}\leq c_n\, c_\lambda^2\,\langle g, g\rangle^{-1} \|V_g^\lambda f\|_{p_1,q_1}\|V_g^\lambda g\|_{r,s}.
\end{equation}
Since $ g$ is a Schwartz function, $ V_g^\lambda g\in \mathcal{S}(\R^{2n} \times \R^{2n})$  and hence  the desired inclusion follows from the above estimate.  
\end{proof}

In the classical setting, the spaces $ M^{p,q}(\R^n) $ are invariant under translations and modulations. Moreover, when $ p = q $ they are also invariant under Fourier transform. In the twisted setting we have the following invariance properties.

\begin{thm} The spaces $  M_\lambda^{p,q}(\R^{2n})$ are invariant under $ \tau_\lambda(\xi) $ and $ e_\lambda(\eta) $ for all $ \xi, \eta \in \R^{2n}.$ When $ p=q$  they are invariant under $ U_\lambda $ defined in \eqref{intertwine}.
\end{thm}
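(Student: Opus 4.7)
My plan is to show in each case that the short-time transform $V_g^\lambda(Af)$, for $A\in\{\tau_\lambda(a),e_\lambda(b),U_\lambda\}$, equals $V_g^\lambda f$ composed with a Lebesgue-measure-preserving linear change of variables on $\R^{2n}\times\R^{2n}$ and multiplied by a unimodular phase factor. The $L^{p,q}$ norm is then automatically preserved. All three operators send $\mathcal S(\R^{2n})$ continuously into itself (from their explicit formulas together with \eqref{intertwine}), so the same identities transfer from $L^2$ to tempered distributions by duality.

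For the twisted translation $\tau_\lambda(a)=\Pi_\lambda(a,0)$ and the twisted modulation $e_\lambda(b)=\Pi_\lambda(0,b)$ I would combine \eqref{pi-rep} with the composition formula \eqref{Pi_lambda composition formula intro}. Applying \eqref{Pi_lambda composition formula intro} to the pair $(\zeta,-\zeta)$ gives $\Pi_\lambda(\zeta)^\ast=\Pi_\lambda(-\zeta)$, and consequently
\[
V_g^\lambda(\tau_\lambda(a)f)(\xi,\eta)=e^{-i\frac{\lambda}{2}(\coth\lambda)\xi\cdot\eta}\langle f,\Pi_\lambda(-a,0)\Pi_\lambda(\xi,\eta)g\rangle.
\]
Invoking \eqref{Pi_lambda composition formula intro} once more with $\zeta=-a$, $\zeta'=\xi+i\eta$ collapses $\Pi_\lambda(-a,0)\Pi_\lambda(\xi,\eta)$ to $\Pi_\lambda(\xi-a,\eta)$ up to a unimodular phase, so that $|V_g^\lambda(\tau_\lambda(a)f)(\xi,\eta)|=|V_g^\lambda f(\xi-a,\eta)|$, and translation invariance of Lebesgue measure in the $\xi$ slot yields $\|\tau_\lambda(a)f\|_{(p,q)}=\|f\|_{(p,q)}$. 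The identical computation with $\zeta=-ib$ in place of $-a$ produces $|V_g^\lambda(e_\lambda(b)f)(\xi,\eta)|=|V_g^\lambda f(\xi,\eta-b)|$ and hence the twisted-modulation invariance.

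For the $U_\lambda$-invariance in the case $p=q$ I would switch to the Bargmann-side picture supplied by Corollary \ref{matrix-bargmann}: $f\in M_\lambda^{p,p}(\R^{2n})\cap L^2(\R^{2n})$ iff $F(\xi,\eta):=\sqrt{w_\lambda(\xi+i\eta)}\,B_\lambda f(\xi+i\eta)$ lies in $L^p(\R^{2n}\times\R^{2n})$. The intertwining relation $U\circ B_\lambda=B_\lambda\circ U_\lambda$ with $UH(\zeta)=H(-i\zeta)$ gives $B_\lambda(U_\lambda f)(\xi+i\eta)=B_\lambda f(\eta-i\xi)$, while a direct inspection of the weight formula shows $w_\lambda(-i\zeta)=w_\lambda(\zeta)$: for $\zeta'=\eta-i\xi$ one has $|\zeta'|^2=|\zeta|^2$ and $[\Re\zeta',\Im\zeta']=[\eta,-\xi]=[\xi,\eta]$. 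Consequently the Bargmann density attached to $U_\lambda f$ is $F(\eta,-\xi)$; and because $p=q$, $\|F\|_{p,p}$ is the genuine $L^p$-norm on $\R^{4n}$, which is invariant under the orthogonal substitution $(\xi,\eta)\mapsto(\eta,-\xi)$. Extension from $L^2$ to the full space proceeds by the density of $\mathcal S(\R^{2n})$ in $M_\lambda^{p,p}(\R^{2n})$ stated in Theorem \ref{lambda Msp prop}(ii) when $p<\infty$, while the case $p=\infty$ is handled by a standard weak-$\ast$ argument.

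The main technical point is that $U_\lambda$ genuinely mixes the two copies of $\R^{2n}$, and this coordinate swap is $L^{p,q}$-invariant only when $p=q$; once that restriction is accepted, the rest reduces to the symmetry $w_\lambda(-i\zeta)=w_\lambda(\zeta)$ (already implicit in the unitarity of $U$ on $\Fs$) and to routine bookkeeping of phases in the $\Pi_\lambda$ composition law.
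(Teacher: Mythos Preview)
Your argument is correct. For the translation and modulation invariance you spell out explicitly what the paper dismisses in one line as ``follows from the definition.'' For the $U_\lambda$-invariance your route via Corollary~\ref{matrix-bargmann} is a minor variant of the paper's: instead of passing through the Bargmann picture with the specific window $p_{1/2}^\lambda$, the paper derives the intertwining relation $U_\lambda\circ\Pi_\lambda(\xi,\eta)\circ U_\lambda^\ast=\Pi_\lambda(-\eta,\xi)$ directly (from $U\circ\rho_\lambda(\zeta)\circ U^\ast=\rho_\lambda(i\zeta)$ conjugated by $B_\lambda$) and then reads off $\langle f,\Pi_\lambda(\xi,\eta)g\rangle=\langle U_\lambda f,\Pi_\lambda(-\eta,\xi)\,U_\lambda g\rangle$ for any Schwartz window $g$. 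Since $U_\lambda g$ is again Schwartz, this identity holds immediately for all tempered $f$ and no density or weak-$\ast$ extension is needed. Your approach buys a concrete geometric picture (the weight symmetry $w_\lambda(-i\zeta)=w_\lambda(\zeta)$ and the orthogonal swap $(\xi,\eta)\mapsto(\eta,-\xi)$), while the paper's avoids the $L^2$-restriction inherent in Corollary~\ref{matrix-bargmann} and the subsequent approximation step.
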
 
\begin{proof} The invariance under $ \tau_\lambda(\xi) $ and $ e_\lambda(\eta) $ follows from the definition.  From the definition of $ \rho_\lambda(\zeta) $ we easily calculate that $ U \circ \rho_\lambda(\zeta) \circ U^\ast = \rho_\lambda (i\zeta) $  from which follows 
$$  U \circ B_\lambda \circ \Pi_\lambda(\xi,\eta) \circ B_\lambda^\ast \circ U^\ast = \rho_\lambda(i\zeta).$$ 
The property $ U \circ B_\lambda  = B_\lambda \circ U_\lambda $ translates the above into 
$$ U_\lambda \circ \Pi_\lambda(\xi,\eta)  \circ  U_\lambda^\ast =  B_\lambda^\ast \circ \rho_\lambda(i\zeta) \circ B_\lambda = \Pi_\lambda(-\eta, \xi).$$
If $ f \in  M_\lambda^{p,p}(\R^{2n})$  so that $ \langle f, \Pi_\lambda(\xi,\eta)g \rangle \in L^{p,p}(\R^{2n} \times \R^{2n}) $ then the equation
$$ \langle f, \Pi_\lambda(\xi,\eta)g \rangle = \langle f,  U_\lambda^\ast \circ \Pi_\lambda(-\eta,\xi) \circ U_\lambda g\rangle = \langle U_\lambda f, \Pi_\lambda(-\eta,\xi) U_\lambda g\rangle $$
shows that $ U_\lambda f \in  M_\lambda^{p,p}(\R^{2n})$ proving the invariance of  $ M_\lambda^{p,p}(\R^{2n})$ under $ U_\lambda.$
\end{proof}

\begin{rem} It is well known that the modulation space $ M^{1,1}(\R^{2n}) = M^{1,1}_\lambda(\R^{2n})$ is a Banach algebra under pointwise multiplication. As $ M^{1,1}$ is invariant under the Fourier transform, $ M^{1,1}(\R^{2n})$ is a Banach algebra under convolution as well. We can also show that $ M^{1,1}(\R^{2n}) $ is a Banach algebra under $ \lambda$-twisted convolution. To see this, recall  \eqref{t-con} which reads as
$$ f \ast_\lambda g(\xi) = \int_{\R^{2n}} f(\eta)\,  \tau_\lambda(\eta)g(\xi)\, d\eta .$$
Since $ M^{1,1}(\R^{2n}) $ is invariant under $ \tau_\lambda(\eta)$   by Minkowski's integral  inequality we have 
$$ \| f \ast_\lambda g \|_{(1,1)} \leq \|f \|_1\, \| g\|_{(1,1)} \leq \|f\|_{(1,1)}\, \|g\|_{(1,1)} .$$
In the last inequality we have used the continuous  the inclusion $ M^{1,1}(\R^{2n}) \subset L^1(\R^{2n}).$  
\end{rem}

The same property holds for $ M_\lambda^{p,p}(\R^{2n})$ as long as $ 1 \leq p \leq 2 $ as show below. The proof makes use of the curious property enjoyed by the twisted convolution, namely 
\begin{equation}\label{curious} \| f \ast_\lambda g \|_p \leq \|f\|_p \, \|g\|_p \, \end{equation}
valid for $ 1 \leq p \leq 2.$ The above is clearly true for $ p = 1$ and for $ p =2 $ it is a consequence of the fact that Weyl transform is an isometry from $ L^2(\R^{2n}) $ onto the space of Hilbert-Schmidt operators. For $ 1 \leq p \leq 2 $ it is proved by interpolation, see  Proposition 3.4 in \cite{GMa}  and Theorem 1.1 in \cite{PKR} for details.\\

\begin{thm} For any $ 1 \leq p \leq 2 $ the space $ M_\lambda^{p,p}(\R^{2n}) = M^{p,p}(\R^{2n})$ is a Banach algebra under twisted convolution.
\end{thm}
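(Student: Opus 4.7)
The plan is to reduce the theorem to bilinear complex interpolation between the two endpoint cases $p = 1$ (already handled in the preceding remark) and $p = 2$.

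First I would dispose of the endpoint $p = 2$. Here I use the classical identification $M^{2,2}(\R^{2n}) = L^2(\R^{2n})$ with equivalent norms, which follows from the $\lambda = 0$ instance of Proposition \ref{square-int} (i.e.\ $\|V_\varphi f\|_2 = c\, \|\varphi\|_2\,\|f\|_2$ for any non-zero Schwartz window $\varphi$). Combining this with the $p = 2$ instance of \eqref{curious}, namely $\|f \ast_\lambda g\|_2 \leq \|f\|_2\,\|g\|_2$, I obtain
$$ \|f \ast_\lambda g\|_{(2,2)} \leq C\, \|f\|_{(2,2)}\,\|g\|_{(2,2)}, $$
so twisted convolution is a bounded bilinear map $M^{2,2} \times M^{2,2} \to M^{2,2}$.

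Next I would interpolate. The bilinear map $T(f,g) = f \ast_\lambda g$ is bounded on $M^{1,1} \times M^{1,1}$ (preceding remark) and on $M^{2,2} \times M^{2,2}$ (previous step). Using the complex interpolation identity
$$ \bigl[M^{1,1}(\R^{2n}),\, M^{2,2}(\R^{2n})\bigr]_\theta = M^{p,p}(\R^{2n}), \qquad 1/p = 1-\theta/2, \qquad \theta \in [0,1], $$
together with the bilinear form of Calder\'on's complex interpolation theorem, one concludes that $T$ is bounded as a bilinear map $M^{p,p} \times M^{p,p} \to M^{p,p}$ for every $1 \leq p \leq 2$. Since $M_\lambda^{p,p} = M^{p,p}$ (as noted after Theorem \ref{independence}), the theorem follows.

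The only point that genuinely requires justification is the complex interpolation identity for modulation spaces. I would argue this directly using a retract structure: by \eqref{norm-est} and \eqref{invers}, for any fixed non-zero Schwartz $\varphi$ the map $V^\lambda_\varphi$ embeds $M^{p,q}_\lambda$ isomorphically onto a complemented subspace of $L^{p,q}(\R^{2n} \times \R^{2n})$, with bounded projection $V^\lambda_\varphi \circ \widetilde{V^\lambda_\varphi}$ (bounded uniformly in $(p,q)$ by \eqref{norm-est}). Thus the twisted modulation spaces form a retract of the mixed-norm Lebesgue scale, which interpolates by the classical Riesz--Thorin theorem, and the stated identity follows from the standard fact that interpolation commutes with retracts. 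The main potential obstacle is to verify the bilinear interpolation theorem in this setting (rather than the usual linear Riesz--Thorin), but this is completely standard given that all three scales are complex interpolation families.
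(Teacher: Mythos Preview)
Your proof is correct but follows a genuinely different route from the paper's. The paper argues directly: it expands $\langle f \ast_\lambda g, \Pi_\lambda(\xi,\eta) h\rangle$ via the definition of twisted convolution, uses the composition law for $\Pi_\lambda$ to rewrite $\tau_\lambda(-\xi')\Pi_\lambda(\xi,\eta)$ as a unimodular factor times $\Pi_\lambda(\xi-\xi',\eta)$, and thereby shows that for each fixed $\eta$ the matrix coefficient equals a twisted convolution $f_\eta \ast_\lambda G_\eta(\xi)$ with $|f_\eta|=|f|$ and $G_\eta(\cdot)=\langle g,\Pi_\lambda(\cdot,\eta)h\rangle$. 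It then applies \eqref{curious} in the $\xi$-variable at each $\eta$, uses the inclusion $M^{p,p}\subset L^p$ (valid for $1\le p\le 2$) to bound $\|f\|_p$ by $\|f\|_{(p,p)}$, and integrates the $p$-th power in $\eta$. This is more elementary and entirely self-contained within the paper's framework, and it explains structurally \emph{why} the curious inequality \eqref{curious} transfers to the modulation-space setting. Your interpolation argument is cleaner once the machinery is in place and would extend immediately to any bilinear operation bounded at the two endpoints; the trade-off is that it imports the complex interpolation of modulation spaces and the bilinear Calder\'on theorem, neither of which the paper otherwise needs.
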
 
\begin{proof} Assuming  that $ f, g \in M_\lambda^{p,p}(\R^{2n}),$ with $ h \in \mathcal{S}(\R^{2n}) $  consider 
$$ \langle f \ast_\lambda g,  \Pi_\lambda(\xi,\eta)h \rangle = \int_{\R^{2n}} f(\xi^\prime)\,  \langle \tau_\lambda(\xi^\prime)g, \Pi_\lambda(\xi,\eta)h \rangle\, d\xi^\prime .$$
Since $ \tau_\lambda(\xi^\prime) = \Pi_\lambda(\xi^\prime,0) $ a simple calculation using \eqref{Pi_lambda composition formula intro} and the fact that $ \Pi_\lambda $ is a representation of $ \He^n_\lambda(\C)$  gives us
$$ \tau_\lambda(-\xi^\prime) \Pi_\lambda(\xi,\eta) = \Pi_\lambda(\xi-\xi^\prime,\eta)\, e^{i \frac{\lambda}{2} (\coth \lambda) \xi^\prime \cdot \eta} \, e^{-i\frac{\lambda}{2} [\xi,\xi^\prime]}.$$
Therefore, by defining $ f_\eta(\xi^\prime) = f(\xi^\prime) \,e^{i \frac{\lambda}{2} (\coth \lambda) \xi^\prime \cdot \eta} $ and $ G_\eta(\xi^\prime) = \langle g, \Pi_\lambda(\xi^\prime,\eta)h \rangle\ $ we can write
$$ \langle f \ast_\lambda g,  \Pi_\lambda(\xi,\eta)h \rangle = f_\eta \ast_\lambda G_\eta(\xi).$$
We now make use of  \eqref{curious} to conclude that 
$$ \| \langle f \ast_\lambda g,  \Pi_\lambda(\cdot,\eta)h \rangle \|_p \leq \| f_\eta \|_p \, \| G_\eta\|_p.$$
Since $ \| f_\eta \|_p = \|f\|_p \leq \|f\|_{(p,p)} $ integrating  the $ p$-th power of the above with respect to $ \eta $ we complete the proof.
\end{proof}

\section{Modulation spaces on the Heisenberg group}

\subsection{ Translations and modulations on the Heisenberg group} The family of unitary operators $ \Pi_\lambda(\xi,\eta) $ 
satisfy the composition formula
$$  \Pi_\lambda (\zeta)\, \Pi_\lambda(\zeta^\prime) =  \Pi_\lambda(\zeta+\zeta^\prime) e^{-i \frac{\lambda}{2} ((\coth \lambda)\, \Im (\zeta \cdot \overline{\zeta^\prime})- \Re [\zeta, \overline{\zeta^\prime}])}.$$
This suggests that we introduce a group structure on $ \C^{2n+1} = \C^{2n} \times \C $ by the rule
$$ (\zeta, s) (\zeta^\prime,s^\prime) =  (\zeta+\zeta^\prime, s+s^\prime +\frac{1}{2}\Re [\zeta, \overline{\zeta^\prime}]+\frac{i}{2} \Im (\zeta \cdot \overline{\zeta^\prime})) .$$
We denote this group by $ G_n $ and note that $ \R^{2n} \times \R $ and $ i\R^{2n} \times \R $ are subgroups of  $ G_n $ each isomorphic to $ \He^n.$  If we let
$$  \Pi_\lambda (\zeta, s) = e^{i \lambda( \Re s -(\coth \lambda)\Im s)} \,  \Pi_\lambda (\zeta),$$
we easily verify that $ \Pi_\lambda(g) \Pi_\lambda(g^\prime) = \Pi_\lambda(g g^\prime)$ for all $ g, g^\prime \in G_n.$ Thus we have a family of unitary representations  of $ G_n $ on $ L^2(\R^{2n}) .$ These are also irreducible as the family $ \rho_\lambda(\zeta) $ is irreducible as shown in \cite{GT}.  The operators $ \Pi_\lambda(\zeta,s) $ initially defined on $ L^2(\R^{2n})$ can  be lifted to unitary operators acting on $ L^2(\He^n) $ when $ s $ is real.  We simply define
\begin{equation}\label{Pi-def} \Pi(\zeta,s)f(x,u,t) = (2\pi)^{-1} \int_{-\infty}^\infty e^{-i\lambda t}\,  \, \Pi_\lambda(\zeta,s)f^\lambda(x,u) \, d\lambda.\\
\end{equation}

As $ \Pi_\lambda(\zeta,s) $ acts on $ L^2(\R^{2n}) $ as unitary operators, it is clear that $ \Pi(\zeta,s) $ is an isometry on $ L^2(\He^n).$ An easy calculation shows that $ \Pi(\zeta,s)^\ast = \Pi(-\zeta,-s) $
and hence $ \Pi(\zeta,s) $ is in fact unitary. Moreover, for $ s \in \R $ and  $ \xi  \in \R^{2n}$ it follows that
$$ \Pi(\xi,s)f(\eta,t) =(2\pi)^{-1} \int_{-\infty}^\infty e^{-i\lambda t}\, e^{i\lambda s} \, \tau_\lambda(\xi)f^\lambda(\eta) \, d\lambda = \tau(\xi,s)f(\eta,t)$$
where $ \tau(g)f(h) = f(g^{-1}h), \, g, h \in \He^n $ is the left translation on the Heisenberg group. The family of operators $U_\lambda $ also gives rise to the operator
$$ \widetilde{U}f(\xi,t) = (2\pi)^{-1} \int_{-\infty}^\infty e^{-i\lambda t}\,  \, U_\lambda f^\lambda(\xi) \, d\lambda $$
which is unitary on $ L^2(\He^n).$  For $ s \in \R $ and $ \eta  \in \R^{2n} $ let us define
$$ e(\eta,s)f(\xi,t) = \Pi(i\eta,s)f(\xi,t) =(2\pi)^{-1} \int_{-\infty}^\infty e^{-i\lambda t}\, e^{i\lambda s} \, e_\lambda(\eta)f^\lambda(\xi) \, d\lambda.$$
As verified in \eqref{trans-mod},  $ U_\lambda \circ \tau_\lambda(a,b) \circ U_\lambda^\ast = e_\lambda(a,b),$  and hence it follows that $  \widetilde{U} \circ \tau(g) \circ  \widetilde{U}^\ast = e(g) $ for any $ g \in \He^n.$ Thus we may consider $ e(g) $ as modulation operators on the Heisenberg group.\\

\begin{rem}\label{Schwarz modulation Hsn} When $ f $ is a Schwartz function on $ \He^n$ it is clear that $ \tau(g)f $ is also Schwartz for any $ g \in \He^n.$  We are interested in knowing if the  same thing is true for $ e(g)f.$ In view of the relation 
$  \widetilde{U}\circ \tau(g) \circ \widetilde{U}^\ast = e(g), $  this property holds if $ \widetilde{U} $ leaves the Schwartz class invariant. In the following proposition we show that this is indeed true.
\end{rem}

\begin{prop} For any Schwartz function $ f $ on $ \He^n, \, \widetilde{U} f$ is also Schwartz.
\end{prop}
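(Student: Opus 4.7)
The plan is to decompose $\widetilde{U}$ into three stages and reduce the claim to a pointwise weighted estimate. Given $f \in \mathcal{S}(\He^n)$, first take the partial Fourier transform in the central variable to obtain $f^\lambda(\xi)$, which lies in $\mathcal{S}(\R^{2n}\times \R)$ as a function of $(\xi,\lambda)$ since partial Fourier transform preserves the Schwartz class. Second, apply $U_\lambda$; by the explicit formula \eqref{intertwine},
\[
U_\lambda f^\lambda(\xi) \;=\; c(\lambda)^n\, \widehat{f^\lambda}(c(\lambda)\xi).
\]
Third, take the inverse Fourier transform in $\lambda$, which again preserves the Schwartz class. The proposition therefore reduces to showing that
\[
g(\xi,\lambda) \;:=\; c(\lambda)^n\, F(c(\lambda)\xi,\lambda), \qquad F(\eta,\lambda) \;:=\; \widehat{f^\lambda}(\eta),
\]
belongs to $\mathcal{S}(\R^{2n}\times \R)$; note that $F$ is already Schwartz in $(\eta,\lambda)$ as a further Fourier transform of $f^\lambda$.

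The reduction rests on three properties of $c(\lambda) = \frac{\lambda}{2}\coth(\lambda/2)$, to be recorded in Lemma \ref{clambda}: $c$ extends smoothly across $\lambda=0$ with $c(0)=1$; $c(\lambda)\ge 1$ for all real $\lambda$, since $y\coth y$ is strictly increasing on $[0,\infty)$ and equals $1$ at the origin; and $c$ together with each of its derivatives has at most polynomial growth as $|\lambda|\to\infty$. The inequality $c(\lambda)\ge 1$ is the crucial ingredient, as it ensures the dilation $\eta = c(\lambda)\xi$ never compresses the $\xi$ direction, so Schwartz decay of $F$ in $\eta$ transfers to Schwartz decay of $g$ in $\xi$.

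To verify that $g$ is Schwartz I would estimate derivatives directly. Chain rule shows that $\partial_\xi^\alpha\partial_\lambda^k g(\xi,\lambda)$ is a finite sum of terms of the form $P_{\alpha,k,\beta,j}(\xi,\lambda)\,(\partial_\eta^\beta\partial_\lambda^j F)(c(\lambda)\xi,\lambda)$, with each $P_{\alpha,k,\beta,j}$ polynomial in $\xi$ and with coefficients polynomial in $c,c',\ldots,c^{(k)}$, hence of polynomial growth in $\lambda$. To bound $(1+|\xi|+|\lambda|)^N\,|\partial_\xi^\alpha\partial_\lambda^k g(\xi,\lambda)|$ I would split into two regimes. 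When $c(\lambda)|\xi|\ge 1$, combine the Schwartz estimate $|(\partial_\eta^\beta\partial_\lambda^j F)(\eta,\lambda)|\le C_M(1+|\eta|)^{-M}(1+|\lambda|)^{-M}$ with $(1+c(\lambda)|\xi|)^{-M}\le c(\lambda)^{-M}|\xi|^{-M}$ and $c(\lambda)\ge 1$ to extract enough negative powers of $c(\lambda)$ to cancel the polynomial $\lambda$-growth while retaining a negative power of $|\xi|$. When $c(\lambda)|\xi|<1$, we have $|\xi|\le 1/c(\lambda)\le 1$, so powers of $|\xi|$ are harmless and Schwartz decay of $F$ in $\lambda$ alone absorbs the remaining polynomial factors. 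Taking $M$ large enough in terms of $N,|\alpha|,k,n$ closes the estimate.

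The main obstacle is really bookkeeping rather than anything deep: tracking the polynomial factors produced by repeated chain-rule differentiation through the composition $\xi\mapsto c(\lambda)\xi$, and verifying that the joint Schwartz decay of $F$ in both $\eta$ and $\lambda$ dominates every one of them uniformly. Once the monotonicity $c\ge 1$ is in hand, no further substantive input is required.
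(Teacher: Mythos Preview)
Your proposal is correct and follows essentially the same route as the paper: both reduce to showing that $c(\lambda)^n\,F(c(\lambda)\xi,\lambda)$ is Schwartz when $F$ is, expand derivatives by the chain rule into terms involving powers of $c$ and its derivatives applied to $(\partial_\eta^\nu\partial_\lambda^k F)(c(\lambda)\xi,\lambda)$, and then invoke the polynomial growth of $c^{(m)}$ together with the lower bound $c(\lambda)\ge 1$. The paper compresses the final step into one sentence, whereas you spell out a two-regime argument; in fact your case split can be bypassed, since $c(\lambda)\ge 1$ gives $1+c(\lambda)|\xi|\ge 1+|\xi|$ and hence $(1+c(\lambda)|\xi|)^{-M}\le(1+|\xi|)^{-M}$ directly, so the Schwartz decay of $F$ in $(\eta,\lambda)$ transfers to decay in $(\xi,\lambda)$ without splitting.
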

\begin{proof} Recalling the definition of $ \widetilde{U}, $ and $ U_\lambda$ we are required to show that 
$$  U_\lambda f^\lambda(\xi) = c(\lambda)^n\, \mathcal{F}_{2n}f^\lambda(c(\lambda)\xi) $$ is a Schwartz function of $ (\xi,\lambda) $ on $ \R^{2n} \times \R.$ Here $ \mathcal{F}_{2n} $ is the Fourier transform on $ \R^{2n}.$ It is therefore enough to consider $ F(\xi,\lambda) = c(\lambda)^n\, g(c(\lambda)\xi,\lambda) $ when $ g $ is Schwartz. We will show that for any multi-indices $ \alpha $ and $ \beta $ and non-negative integers $ j $ and $ p$ the functions $ F_{\alpha,\beta,j,p}(\xi,\lambda) = \lambda^p  \partial_\lambda^j\, \xi^\beta \partial_\xi^\alpha F(\xi,\lambda) $ are bounded on $ \R^{2n}\times \R.$ Note that 
$$ \partial_\xi^\alpha g(c(\lambda)\xi,\lambda)  = c(\lambda)^{|\alpha|}\,  \, (\partial_\xi^\alpha g)(c(\lambda)\xi,\lambda),$$
$$  \partial_\lambda \,g(c(\lambda)\xi,\lambda)  = (\partial_\lambda g)(c(\lambda)\xi,\lambda) + \frac{d}{d\lambda}c(\lambda)\,  \, \sum_{j=1}^{2n} \xi_j (\partial_{\xi_j}g)(c(\lambda)\xi,\lambda)$$
and therefore a moment's thought reveals that $ \partial_\lambda^j\,  \partial_\xi^\alpha F(\xi,\lambda) $ is a finite linear combination of terms of the form  
$$ c(\lambda)^l \, \frac{d^m}{d\lambda^m} c(\lambda)\, \xi^\mu \,(\partial_\xi^\nu \partial_\lambda^kg)(c(\lambda)\xi,\lambda).$$
The boundedness of $ F_{\alpha,\beta,j,p}(\xi,\lambda)$ follows immediately once we have the following estimates on $ c(\lambda) $ and its derivatives.
\end{proof}

\begin{lem}\label{clambda} For any  $ \lambda \in \R $ and a non-negative integer $ m $ we have $ |c(\lambda)| \geq c_1 $ and 
$$ |\frac{d^m}{d\lambda^m} c(\lambda)| \leq c_2 (1+|\lambda|)^{m+1}.$$
\end{lem}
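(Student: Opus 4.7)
The plan is to exploit the explicit formula $c(\lambda) = \tfrac{1}{2}\lambda\coth(\lambda/2) = (\lambda/2)/\tanh(\lambda/2)$, and to separate the lower bound from the derivative bound, with the latter split into a local analysis near the origin and a global analysis away from it.

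For the lower bound I would first observe that $c$ is an even function of $\lambda$, since both $\lambda$ and $\coth(\lambda/2)$ are odd. It therefore suffices to take $\lambda \geq 0$, and the classical inequality $\tanh x \leq x$ for $x \geq 0$ applied to $x = \lambda/2$ gives $c(\lambda) \geq 1$, so one may take $c_1 = 1$.

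For the derivative bound, the first step is to note that the apparent pole of $\coth(\lambda/2)$ at $\lambda = 0$ is cancelled by the factor $\lambda$, so $c$ is real-analytic on all of $\R$; explicitly, the Laurent expansion $\coth(x) = 1/x + \sum_{k\geq 1}\tfrac{2^{2k}B_{2k}}{(2k)!}x^{2k-1}$ yields $c(\lambda) = 1 + \lambda^2/12 + O(\lambda^4)$ near $0$. In particular every derivative $c^{(m)}$ is uniformly bounded on the compact interval $[-1,1]$, and by evenness of $c$ the task reduces to estimating $c^{(m)}(\lambda)$ on $\lambda \geq 1$. There I would use the splitting
\[ c(\lambda) = \frac{\lambda}{2} + \frac{\lambda}{e^\lambda - 1}, \]
obtained from the identity $\coth(\lambda/2) = 1 + 2/(e^\lambda - 1)$. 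The linear piece contributes $\lambda/2$ for $m=0$, $1/2$ for $m=1$, and $0$ for $m\geq 2$, while a short induction on $m$ shows that $g(\lambda) = \lambda/(e^\lambda - 1)$ and all its derivatives decay exponentially as $\lambda \to +\infty$, so $|g^{(m)}(\lambda)| \leq C_m$ uniformly on $[1,\infty)$.

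Assembling these three pieces yields $|c(\lambda)| \leq C(1+|\lambda|)$ and $|c^{(m)}(\lambda)| \leq C_m$ for $m \geq 1$, which is in fact much stronger than the stated bound $c_2(1+|\lambda|)^{m+1}$. The only mildly delicate point is the exponential-decay estimate on $g^{(m)}$, but it is routine via Leibniz applied to $g = \lambda\cdot(e^\lambda-1)^{-1}$, using that $d^k/d\lambda^k\bigl((e^\lambda-1)^{-1}\bigr)$ is a rational combination of $e^{j\lambda}(e^\lambda-1)^{-k-1}$ and hence $O(e^{-\lambda})$ for $\lambda\geq 1$.
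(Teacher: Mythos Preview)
Your proof is correct and in fact yields a sharper conclusion than the lemma asserts. It is, however, organised quite differently from the paper's argument. The paper works with the multiplicative identity $c(\lambda)\,\psi(\lambda)=\cosh\lambda$, where $\psi(\lambda)=\sinh\lambda/\lambda$, observes that $\psi^{(m)}(\lambda)=\tfrac12\int_{-1}^{1}t^m e^{\lambda t}\,dt$ grows like $\cosh\lambda$, and then uses the Leibniz rule to obtain a recursion expressing $c^{(m)}\psi$ in terms of $c^{(j)}\psi^{(m-j)}$ for $j<m$; the polynomial bound then follows by induction, and the lower bound comes from $c(\lambda)^{-1}=\psi(\lambda)/\cosh\lambda$. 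Your approach instead uses the additive splitting $c(\lambda)=\lambda/2+\lambda/(e^{\lambda}-1)$ away from the origin and a power-series argument near it. This is more elementary and actually shows that $c^{(m)}$ is bounded for $m\geq 1$, not just polynomially growing; the trade-off is that you need the separate local/global analysis, whereas the paper's inductive scheme treats all $\lambda$ uniformly in one stroke.
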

\begin{proof}  As $ c(\lambda) = \lambda \coth \lambda $ it is clear that the lemma is true for $ m =0.$ By defining $ \varphi(\lambda) = \left(\frac{\lambda}{\sinh \lambda}\right) $ and $ \psi(\lambda)= \varphi(\lambda)^{-1} $ we note that 
$$  \psi^{(m)}(\lambda) = \frac{1}{2} \int_{-1}^1 t^m\,e^{\lambda t} dt $$
which shows that  $ \psi^{(m)}(\lambda)$ grows like $ \cosh \lambda.$ From the equation $ c(\lambda) \psi(\lambda) = \cosh \lambda$ we get
$$ c^{(m)}(\lambda)\, \psi(\lambda) = \cosh \lambda  - \sum_{j=0}^{m-1}  \frac{m!}{j! (m-j)!} \,c^{(j)}(\lambda) \, \psi^{(m-j)}(\lambda)$$
when $ m $ is even. (For odd $ m $ the above formula holds after changing  $ \cosh \lambda $ into $ \sinh \lambda$ on the right.) By induction we get the estimates on the derivatives of $ c(\lambda).$  The lower bound for $ c(\lambda) $ follows from $ c(\lambda)^{-1} = \psi(\lambda) (\cosh \lambda)^{-1}.$
\end{proof}

\subsection{Matrix coefficients associated to the representation $ \Pi$} Recall that the modulation spaces $ M^{p,q}(\R^n)$ are defined in terms of the matrix coefficients $ \langle f, \pi_1(x,y,t)g \rangle $ associated to the Schr\"odinger representation of the Heiseberg group. In particular, the identification $ M^{2,2}(\R^n) = L^2(\R^n) $ is a consequence of the fact that $ \langle f, \pi_1(x,y,t)g \rangle $ is square integrable modulo the centre of  $ \He^n $ and the identity
\begin{equation}\label{fourier-wigner} \int_{\R^{2n}} |\langle f, \pi_1(x,y,t)g \rangle|^2\, dx\, dy\, = c_n \left ( \int_{\R^n} |f(x)|^2 \, dx \right) \left ( \int_{\R^n} |g(x)|^2 \, dx \right).
\end{equation}
We now consider the matrix coefficients $ \langle f, \Pi(\zeta,s) g \rangle $ of the representation $ \Pi$ of $ G_n $ where $ f, g \in L^2(\He^n)$  and see if an identity of the above form is true. For $ \zeta = \xi+i\eta $ and $ s \in \R,$
$$ \langle f, \Pi(\zeta,s) g \rangle = c_n  \int_{-\infty}^\infty e^{-i\lambda s}\, \langle  f^\lambda, \Pi_\lambda(\xi,\eta) g^\lambda \rangle \, d\lambda. $$
By Plancherel theorem for the Fourier transform in the $ s$ variable we get
$$ \int_{\C^{2n}} \int_{-\infty}^\infty |\langle f, \Pi(\zeta,s) g \rangle|^2\, d\zeta\, ds  = c_n  \int_{-\infty}^\infty \left(\int_{\R^{4n}} | \langle  f^\lambda, \Pi_\lambda(\xi,\eta) g^\lambda \rangle|^2  \, 
d\xi\,d\eta \right) d\lambda. $$
The inner integral has been already calculated in Proposition \ref{square-int}. Using that we obtain
\begin{equation}\label{norm-Vgf} \int_{\C^{2n}} \int_{-\infty}^\infty |\langle f, \Pi(\zeta,s) g \rangle|^2\, d\zeta\, ds  = c_n  \int_{-\infty}^\infty \left(\frac{\sinh \lambda}{\lambda}\right)^{2n}  \|f^\lambda\|_2^2\, \|g^\lambda\|_2^2\, d\lambda.
\end{equation}
Thus we see that the representation $ \Pi $ of $G_n $ fails to be square integrable.\\

Since the modulation spaces $ M^{p,q}(\R^n)$ are also defined in terms of  the Bargmann transform which corresponds to $ \langle f, \pi_1(x,y,t)q_{1/2} \rangle $ where $ q_t $ is the euclidean heat kernel, let us try to see the behaviour of $ \langle f, \Pi(\zeta,s) p_{1/2} \rangle $ where $ p_t $ is the heat kernel associated to the sublaplacian on $ \He^n.$ Recall the relation  \eqref{matrix-barg}, namely
$$ d_n\, \sqrt{c_\lambda} \, \langle f^\lambda, \Pi_\lambda(\xi,\eta) p_{1/2}^\lambda \rangle =  B_\lambda f^\lambda(\zeta)\,\sqrt{w_\lambda(\zeta)} $$   
established in the course of the proof of Corollary \ref{matrix-bargmann}.  We also have
\begin{equation}\label{mat-co} \langle f, \Pi(\zeta,s) p_{1/2} \rangle = c_n  \int_{-\infty}^\infty e^{-i\lambda s}\, \langle  f^\lambda, \Pi_\lambda(\xi,\eta) p_{1/2}^\lambda \rangle \, d\lambda .
\end{equation}
 Let $ T $ stand for the Fourier multiplier operator corresponding to $ \sqrt{c_\lambda} $ acting on $ f $ in the central variable. Then we have
\begin{equation}\label{matrix-modi} \langle Tf, \Pi(\zeta,s) p_{1/2} \rangle = c_n  \int_{-\infty}^\infty e^{-i\lambda s}\, B_\lambda f^\lambda(\zeta)\,\sqrt{w_\lambda(\zeta)}\, \, d\lambda. 
\end{equation}
As $ B_\lambda $ is an isometry we obtain the identity
\begin{equation}\label{matrix-planch} \int_{\C^{2n}} \int_{-\infty}^\infty |\langle Tf, \Pi(\zeta,s) p_{1/2} \rangle|^2\, d\zeta\, ds  =  c_n\, \int_{\He^n} |f(h)|^2 \,dh.
\end{equation}
Note  that $\langle Tf, \Pi(\zeta,s) p_{1/2} \rangle = T\langle f, \Pi(\zeta,\cdot) p_{1/2} \rangle(s) $ and hence the modified matrix coefficients $T\langle f, \Pi(\zeta,\cdot) p_{1/2} \rangle(s) $ are square integrable over $ \C^{2n}\times \R.$\\

 Something interesting happens if we replace the heat kernel $ p_t$ associated to the sublaplacian by the heat kernel $ h_t $ for the full Laplacian $ \Delta_{\He}$  on $ \He^n.$  Since $ \Delta_{\He} = -\frac{\partial^2}{\partial t^2} +\mathcal{L}, $ it follows that $ h_t^\lambda(x,u) = e^{-\frac{1}{2}\lambda^2} \, p_t^\lambda(x,u) $ and hence
\begin{equation}\label{heisen-segal-barg}
 \langle Tf, \Pi(\zeta,s) h_{1/2} \rangle = c_n  \int_{-\infty}^\infty e^{-i\lambda s}\, e^{-\frac{1}{2}\lambda^2}\, B_\lambda f^\lambda(\zeta)\,\sqrt{w_\lambda(\zeta)}\, \, d\lambda. 
\end{equation}
From the above expression it is clear that the function $ s \rightarrow  \langle Tf, \Pi(\zeta,s) h_{1/2} \rangle $ has a holomorphic extension to the whole of $ \C.$  By defining
\begin{equation}\label{heisen-barg}
B_{\He}f(\zeta,s) = e^{\frac{1}{4}s^2}\,  \langle Tf, \Pi(\zeta,s) h_{1/2} \rangle 
\end{equation}
we observe that $ B_{\He}f(\zeta,s) $ is nothing but the classical Bargmann transform of the function $ \lambda \rightarrow B_\lambda f^\lambda(\zeta)\,\sqrt{w_\lambda(\zeta)}$ evaluated at $-is.$ 
Consequently, we have the identity 
\begin{equation}\label{barg-identity}
 \int_{\C} |B_{\He}f(\zeta, s)|^2\,  e^{-\frac{1}{2}|s|^2}\,  ds = c_n \int_{-\infty}^\infty |B_\lambda f^\lambda (\zeta)|^2 \, w_\lambda(\zeta)\, d\lambda.
 \end{equation}
 Since the image of $ L^2(\R) $ under the Bargmann transform is a reproducing kernel Hilbert space we also have the pointwise estimate
 \begin{equation}\label{esti-point}
 |B_{\He}f(\zeta,s)|^2 \leq c_n\, e^{\frac{1}{2}|s|^2}\, \int_{-\infty}^\infty |B_\lambda f^\lambda (\zeta)|^2 \, w_\lambda(\zeta)\, d\lambda.
\end{equation}
In view of \eqref{matrix-modi} the above gives the following estimate valid for $ s \in \R$:
 \begin{equation}\label{esti-point-one}
 |B_{\He}f(\zeta,s)|^2 \leq c_n\, e^{\frac{1}{2}|s|^2}\, \int_{-\infty}^\infty  |\langle Tf, \Pi(\zeta,s) p_{1/2} \rangle|^2\, ds.
\end{equation}
This inequality and the identity \eqref{matrix-planch} motivate our definition of modulation spaces on the Heisenberg group.\\

We would like to extend the definition of the matrix coefficients $\langle f, \Pi(\zeta,s) g \rangle$ when $ f $ is a tempered distribution and $ g $ a Schwartz class function. This can be done once we know that $ \Pi(\zeta,s)g $ is Schwartz  whenever $ g $ is.

\begin{lem}\label{schwartz}  For any $ g \in \mathcal{S}(\He^n) $ and $ (\zeta,s) \in \C^{2n} \times \R ,$ the function $ \Pi(\zeta,s)g \in \mathcal{S}(\He^n).$
\end{lem}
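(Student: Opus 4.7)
\emph{Proof sketch.} The plan is to work on the Fourier transform side in the central variable. Since the partial Fourier transform in $t$ is a topological isomorphism of $\mathcal{S}(\He^n)$ onto $\mathcal{S}(\R^{2n}\times\R)$, and by \eqref{Pi-def} the function $\Pi(\zeta,s)g$ is the inverse Fourier transform in $\lambda$ of $\lambda \mapsto \Pi_\lambda(\zeta,s)g^\lambda$, the first step is to reduce the statement to showing that $(\xi,\lambda) \mapsto \Pi_\lambda(\zeta,s)g^\lambda(\xi)$ lies in $\mathcal{S}(\R^{2n}\times\R)$ whenever $g^\lambda$ does.

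Next I would exploit the factorisation \eqref{pi-rep}, together with the fact that $\Pi_\lambda(\zeta,s) = e^{i\lambda s}\Pi_\lambda(\zeta)$ for $s\in\R$, and the explicit formulas \eqref{def:twisted-translation} and \eqref{twisted-modlation} for $\tau_\lambda$ and $e_\lambda$, to write, with $\zeta = \xi_0 + i\eta_0$,
\begin{equation*}
\Pi_\lambda(\zeta,s)g^\lambda(\xi) = g^\lambda\bigl(\xi - \xi_0 + \tanh(\lambda/2)\,M\eta_0\bigr)\, e^{i\Phi(\lambda,\xi)},
\end{equation*}
where $M$ is a fixed linear map on $\R^{2n}$ and the real phase $\Phi(\lambda,\xi)$ is an affine function of $\xi$ whose coefficients are built from $\lambda$, $\lambda\coth\lambda$ and $\lambda\coth(\lambda/2) = 2c(\lambda)$.

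The crucial observation is that all three coefficient functions $\lambda$, $\lambda\coth\lambda$ and $2c(\lambda)$ extend to $C^\infty(\R)$ (the apparent singularities at $\lambda=0$ cancel) and have derivatives of at most polynomial growth in $\lambda$. For $c(\lambda)$ this is exactly the content of Lemma \ref{clambda}; for $\lambda\coth\lambda$ it follows, e.g., from the identity $\lambda\coth\lambda = \lambda + 2\lambda/(e^{2\lambda}-1)$ by a short computation parallel to the proof of that lemma. Similarly $\tanh(\lambda/2)$ is smooth on $\R$ and bounded with derivatives of all orders bounded, so the $\lambda$-dependent shift $\xi \mapsto \xi - \xi_0 + \tanh(\lambda/2)M\eta_0$ is smooth in $\lambda$ and preserves the Schwartz class uniformly in $\lambda$.

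Given these ingredients, the verification that $\Pi_\lambda(\zeta,s)g^\lambda(\xi)$ is Schwartz in $(\xi,\lambda)$ reduces to a routine application of the Leibniz rule: each $\partial_\lambda^j\partial_\xi^\alpha$ produces a finite sum of terms consisting of a derivative of $g^\lambda$ evaluated at the shifted argument, multiplied by polynomials in $\xi$ and smooth functions of $\lambda$ of at most polynomial growth. Multiplication by the unimodular factor $e^{i\Phi}$ preserves rapid decay. The main (and only mild) obstacle is the bookkeeping required to track the polynomial growth in $\lambda$ that accumulates when the phase $\Phi(\lambda,\xi)$ is differentiated repeatedly; this is precisely what the estimates of Lemma \ref{clambda}, and its analogue for $\lambda\coth\lambda$, are designed to handle. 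Once those bounds are in hand, the conclusion $\Pi(\zeta,s)g \in \mathcal{S}(\He^n)$ follows by inverse Fourier transforming in $\lambda$.
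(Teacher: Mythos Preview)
Your proposal is correct and follows the same overall strategy as the paper: pass to the Fourier side in the central variable, use the factorisation \eqref{pi-rep}, and control the $\lambda$-dependent coefficients via Lemma~\ref{clambda} and its easy variants.

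The organisational difference is that the paper does not carry out the full direct computation you outline. Instead it observes that the phase factor $e^{i\lambda s}e^{i\frac{\lambda}{2}(\coth\lambda)\,\xi_0\cdot\eta_0}$ in \eqref{pi-rep} depends only on $\lambda$ (not on the spatial variable), hence commutes with $e_\lambda(\eta_0)\tau_\lambda(\xi_0)$. This gives $\Pi(\zeta,s)g = e(\eta_0,0)\,\tau(\xi_0,0)\,g_0$, where $g_0^\lambda = e^{i\lambda s}e^{i\frac{\lambda}{2}(\coth\lambda)\,\xi_0\cdot\eta_0}\,g^\lambda$. Since the Heisenberg translation $\tau(\xi_0,0)$ and the Heisenberg modulation $e(\eta_0,0)$ have already been shown to preserve $\mathcal{S}(\He^n)$ (Remark~\ref{Schwarz modulation Hsn} and Proposition~4.2), the only thing left is to check that $g_0$ is Schwartz --- a one-line application of Lemma~\ref{clambda} to a phase independent of $(x,u)$. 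Your approach instead unpacks the explicit formulas for $\tau_\lambda$ and $e_\lambda$ and handles the $\tanh(\lambda/2)$ shift and the $(x,u)$-dependent phase directly; this is more self-contained but duplicates work already absorbed into Proposition~4.2.
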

\begin{proof} We have already noted in Remark \ref{Schwarz modulation Hsn} that both $ \Pi(\xi,s) $ and $ \Pi(i\eta,s) $ for $ \xi, \eta \in \R^{2n}$ preserve the Schwartz class as they are translations and modulations.  In view of the definition \eqref{Pi-def} and the relation \eqref{pi-rep} we have
$$\Pi(\zeta,s)g(x,u,t) = (2\pi)^{-1} \int_{-\infty}^\infty e^{-i\lambda t}\, e^{i\lambda s} \, e^{i\frac{\lambda}{2} (\coth \lambda)\, \xi \cdot \eta}\, e_\lambda(\eta) \, \tau_\lambda(\xi)\,g^\lambda(x,u) \, d\lambda.$$
It is therefore enough to show that the function
$$ g_0(x,u,t) =(2\pi)^{-1} \int_{-\infty}^\infty e^{-i\lambda t}\, e^{i\lambda s} \, e^{i\frac{\lambda}{2} (\coth \lambda)\, \xi \cdot \eta}\, g^\lambda(x,u) \, d\lambda$$
is Schwartz. But this is immediate in view of the properties of $ c(\lambda) = \lambda/2\,  \coth \lambda/2$ proved in Lemma \ref{clambda}. Note that it is important to assume $ s $ is real.
\end{proof}

\subsection{Modulation spaces on the Heisenberg group}

  We look for Banach spaces of tempered distributions on $ \He^n $ which are  invariant under translations $ \tau(h) $ and modulations $ e(h)$ for all $ h \in \He^n$ or equivalently invariant under the operators $ \Pi(\zeta,s), \zeta \in \C^{2n}, s \in \R.$ We look for a definition of $ M^{p,q}(\He^n) $ which  reduces to $ L^2(\He^n) $ when $ p =q=2.$  We also like to have the  Schwartz space $ \mathcal{S}(\He^n) $ contained in $ M^{p,q}(\He^n) .$ 
  We therefore, begin by investigating the behaviour of  $   \langle Tf, \Pi(\zeta,s) p_{1/2} \rangle $ when $ f $ is a Schwartz class function.\\

Let us try to get some pointwise estimates for $ B_\lambda f^\lambda(\zeta) $ when $ f $ is Schwartz.  Since the reproducing kernel for $ \Fs $ is given by  \eqref{rep-ker} it follows that 
$$ | B_\lambda f^\lambda(\zeta)|^2 \leq \| B_\lambda f^\lambda \|_{\Fs}^2 \,  K_\zeta(\zeta) = c_\lambda^2\, \left(\int_{\R^{2n}} |f^\lambda(x,u)|^2 \, dx\, du \right) w_{\lambda}^{-1}.$$
Therefore, we get the pointwise estimate
    $$ |B_\lambda f^\lambda(\zeta)| \sqrt{w_\lambda(\zeta)} \leq   c_\lambda \,\left(\int_{\R^{2n}} |f^\lambda(x,u)|^2 \, dx\, du \right)^{1/2} \leq   c_\lambda\, \int_{-\infty}^\infty \|f(\cdot,t)\|_2 \, dt.$$
Defining $ f_m(x,u,t) = (1-\partial_t^2)^{m} f(x,u,t) $ and noting that $ (1+\lambda^2)^m\, f^\lambda(x,u) = f_m^\lambda(x,u) $ we can improve the above estimate as
\begin{equation}\label{pointwise-barg-one} |B_\lambda f^\lambda(\zeta)| \sqrt{w_\lambda(\zeta)} \leq  c_\lambda\, ( 1+\lambda^2)^{-m}\,  \int_{-\infty}^\infty \|f_m(\cdot,t)\|_2 \, dt.
\end{equation}
We  would also like to get  an estimate of $B_\lambda f^\lambda(\zeta) \sqrt{w_\lambda(\zeta)} $ involving decay in the $\zeta$ variable. Such an estimate has been proved in \cite{RRST} when $ \lambda =1.$ As we need an estimate involving both $ \zeta $ and $ \lambda $ variables, we have to keep track of the constants  that depend on  $ \lambda.$\\

\begin{lem} Let $ a = \lambda/2\, \coth (\lambda/2)$ and $ b = i \lambda/2 $ and define $ A_j = \big( \frac{\partial}{\partial x_j} - a x_j \big) $and   $B_j = \big( - \frac{\partial}{\partial u_j} + au_j \big) $  for $ j = 1,2,...,n.$ Then we have the following relations:
$$ -(a^2+b^2) z_j \, B_\lambda f^\lambda(z,w) =  B_\lambda \big(a A_j- b B_j \big)f^\lambda(z,w),$$
$$ (a^2+b^2) w_j \, B_\lambda f^\lambda(z,w) =  B_\lambda \big(b A_j+a B_j \big)f^\lambda(z,w).$$
\end{lem}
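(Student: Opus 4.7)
The plan is to use the integral representation of $B_\lambda$, namely
\[ B_\lambda f^\lambda(z,w) = c_\lambda\, p_1^\lambda(z,w)^{-1} \int_{\R^{2n}} f^\lambda(y,v)\, p_{1/2}^\lambda(z-y,w-v)\, e^{i\frac{\lambda}{2}(v\cdot z - w\cdot y)}\, dy\, dv, \]
and perform integration by parts against the $y_j$ and $v_j$ variables. Writing $K'(y,v,z,w) = p_{1/2}^\lambda(z-y,w-v)\, e^{i\frac{\lambda}{2}(v\cdot z - w\cdot y)}$ and recalling that $p_{1/2}^\lambda$ is a Gaussian with parameter $a = (\lambda/2)\coth(\lambda/2)$, a direct computation shows
\[ \partial_{y_j} K' = \bigl[a(z_j-y_j) - b\, w_j\bigr] K', \qquad \partial_{v_j} K' = \bigl[a(w_j-v_j) + b\, z_j\bigr] K', \]
where $b = i\lambda/2$.

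Next, I would integrate by parts to bring these factors onto $f^\lambda$. For $A_j = \partial_{x_j} - a x_j$ applied to $f^\lambda$, the $ay_j$ and $-ay_j$ terms cancel and the surviving contribution is
\[ \int A_j f^\lambda\, K'\, dy\, dv = (-a z_j + b w_j)\, (f^\lambda \ast_\lambda p_{1/2}^\lambda)(z,w), \]
so after multiplying by $c_\lambda\, p_1^\lambda(z,w)^{-1}$ we get the intertwining identity
\[ B_\lambda(A_j f^\lambda)(z,w) = (-a z_j + b w_j)\, B_\lambda f^\lambda(z,w). \]
The analogous computation for $B_j = -\partial_{u_j} + a u_j$ yields
\[ B_\lambda(B_j f^\lambda)(z,w) = (b z_j + a w_j)\, B_\lambda f^\lambda(z,w). \]

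Finally, I would regard the last two equations as a $2\times 2$ linear system in the unknowns $z_j B_\lambda f^\lambda$ and $w_j B_\lambda f^\lambda$; its matrix $\left(\begin{smallmatrix} -a & b \\ b & a \end{smallmatrix}\right)$ has determinant $-(a^2+b^2)$, which is nonzero since $a^2+b^2 = \lambda^2/(4\sinh^2(\lambda/2))$. Taking the appropriate linear combinations $a\cdot(\text{first}) - b\cdot(\text{second})$ and $b\cdot(\text{first}) + a\cdot(\text{second})$ and using linearity of $B_\lambda$ in the argument, the two identities in the lemma drop out immediately.

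The main obstacle is purely bookkeeping: keeping track of the signs from integration by parts and from the phase factor $e^{i\frac{\lambda}{2}(v\cdot z - w\cdot y)}$ in the twisted convolution, and correctly handling the cancellation between the $y_j f^\lambda$ and $-\partial_{y_j}$-generated $a y_j$ terms (similarly for $v_j$). No deep ideas are needed beyond the observation that the Gaussian $p_{1/2}^\lambda$ and the phase together produce a first-order operator on the integrand whose adjoint, restricted to the image, is multiplication by $-az_j + bw_j$ (resp.\ $bz_j + aw_j$).
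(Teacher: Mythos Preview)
Your proof is correct and follows essentially the same route as the paper: the paper carries out the same integration by parts on the twisted convolution $e^{-\frac{1}{2}L_\lambda}f^\lambda = f^\lambda \ast_\lambda p_{1/2}^\lambda$ to obtain the two intertwining identities $(-az_j+bw_j)\,e^{-\frac{1}{2}L_\lambda}f^\lambda = e^{-\frac{1}{2}L_\lambda}A_jf^\lambda$ and $(bz_j+aw_j)\,e^{-\frac{1}{2}L_\lambda}f^\lambda = e^{-\frac{1}{2}L_\lambda}B_jf^\lambda$, and then inverts the same $2\times 2$ system. The only cosmetic difference is that the paper works with $e^{-\frac{1}{2}L_\lambda}$ rather than $B_\lambda$ directly, multiplying by the $(z,w)$-dependent factor $c_\lambda\,p_1^\lambda(z,w)^{-1}$ at the end; since this factor is a scalar multiplier in $(z,w)$ and does not interact with the integration variables, the two presentations are equivalent.
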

\begin{proof} By setting $ e^{-\frac{1}{2}L_\lambda}f^\lambda(z,w) = f^\lambda \ast_\lambda p_{1/2}^\lambda(z,w) $ we have
$$ e^{-\frac{1}{2}L_\lambda} (\frac{\partial}{\partial x_j^\prime}f^\lambda)(z,w)=  \int_{\R^{2n}} \frac{\partial}{\partial x_j^\prime}f^\lambda(x^\prime,u^\prime) \, p_{1/2}^\lambda(z-x^\prime,w-u^\prime)\, e^{-i\frac{\lambda}{2} ( x^\prime \cdot w- u^\prime \cdot z)} \, dx^\prime\, du^\prime.$$
Integrating by parts and collecting the terms we get
$$ (-a z_j+b w_j)e^{-\frac{1}{2}L_\lambda}f^\lambda(z,w) = e^{-\frac{1}{2}L_\lambda} A_j f^\lambda(z,w).$$
In a similar way we can also prove the identity
$$ (b z_j+a w_j)e^{-\frac{1}{2}L_\lambda}f^\lambda(z,w) = e^{-\frac{1}{2}L_\lambda} B_j f^\lambda(z,w).$$
Now the lemma follows from these two identities.
\end{proof}

From the lemma we infer that  $ z_j B_\lambda f^\lambda(z,w) = B_\lambda(P_{j,\lambda} f^\lambda)(z,w) $ and $ w_j B_\lambda f^\lambda(z,w) = B_\lambda(Q_{j,\lambda} f^\lambda)(z,w) $ where $ P_{j,\lambda} $ and $ Q_{j,\lambda}$ are first order differential operators with monomial coefficients. The constants occurring in these differential  operators are one of the following: $ a(a^2+b^2)^{-1},\, a^2(a^2+b^2)^{-1},\, b(a^2+b^2)^{-1} $ and $ab(a^2+b^2)^{-1}.$ By inspection we can convince ourselves that these constants are bounded by $ (1+|\lambda|)\, e^{|\lambda|}.$ More generally, for $ \alpha \in \mathbb{N}^{2n} $ with $ |\alpha|= N$ we have 
$$ \zeta^\alpha\,  B_\lambda f^\lambda(\zeta) = B_\lambda(P f^\lambda)(\zeta ),\,\,\, P = \sum_{|\mu|+|\nu|=N} c_{\mu,\nu}(\lambda)\, (x,u)^\mu \partial_{(x,u)}^\nu$$  where the constants are bounded by $ (1+|\lambda|)^n\, e^{N|\lambda|}.$ As before this gives the estimate
\begin{equation}\label{pointwise-barg-two}
 |B_\lambda f^\lambda(\zeta)| \sqrt{w_\lambda(\zeta)} \leq C  c_\lambda^2\,  (1+\lambda^2)^{N/2} \, e^{N|\lambda|}\,(1+|\zeta|)^{-N}  \sum_{|\mu|+|\nu|=N} \| (x,u)^\mu \partial_{(x,u)}^\nu f^\lambda \|_2.
 \end{equation}
 Once again as $ f $ is Schwartz, the last term in the above inequality can be estimated independent of $ \lambda.$  Combining \eqref{pointwise-barg-one} and \eqref{pointwise-barg-two}  we get
 
  \begin{prop}\label{point-barg} Let $ f $ be a Schwartz function on $ \He^n.$ Then for any $ m, N \in \mathbb{N} $ we have
  $$ |B_\lambda f^\lambda(\zeta)| \sqrt{w_\lambda(\zeta)} \leq C_{m,N}(f)\,  c_\lambda\, (1+\lambda^2)^{-m} \, e^{N|\lambda|}\,(1+|\zeta|)^{-N} $$
 \end{prop}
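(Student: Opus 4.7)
The plan is to interleave the arguments that produced \eqref{pointwise-barg-one} and \eqref{pointwise-barg-two}, rather than combining the two conclusions separately. Both earlier estimates were derived by applying the same reproducing-kernel pointwise bound $|B_\lambda h^\lambda(\zeta)|\sqrt{w_\lambda(\zeta)} \leq c_\lambda \|h^\lambda\|_2$: estimate \eqref{pointwise-barg-one} took $h = (1-\partial_t^2)^m f$ to extract $\lambda$-decay from Schwartz regularity in the central variable, while \eqref{pointwise-barg-two} took $h = P_{\alpha,\lambda} f$ to extract $\zeta$-decay by trading monomial weights for spatial derivatives. Applying the same bound to $h = P_{\alpha,\lambda}(1-\partial_t^2)^m f$ should deliver both kinds of decay simultaneously.

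Concretely, iterating the lemma just before the proposition $N=|\alpha|$ times gives the identity $\zeta^\alpha B_\lambda f^\lambda(\zeta) = B_\lambda(P_{\alpha,\lambda} f^\lambda)(\zeta)$, where $P_{\alpha,\lambda}$ is a differential operator of order $N$ in the spatial variables whose scalar coefficients are bounded by $(1+|\lambda|)^n e^{N|\lambda|}$ by Lemma \ref{clambda}. Since $\bigl((1-\partial_t^2)^m f\bigr)^\lambda = (1+\lambda^2)^m f^\lambda$ and $(1-\partial_t^2)^m$ commutes with every operator of the form $(x,u)^\mu \partial_{(x,u)}^\nu$, substituting $(1-\partial_t^2)^m f$ for $f$ in the identity, applying the reproducing-kernel bound, summing over multi-indices with $|\alpha|\leq N$, and using $(1+|\zeta|)^N \lesssim \sum_{|\alpha|\leq N}|\zeta^\alpha|$ yields
\[
(1+\lambda^2)^m (1+|\zeta|)^N |B_\lambda f^\lambda(\zeta)|\sqrt{w_\lambda(\zeta)} \leq C\, c_\lambda (1+|\lambda|)^n e^{N|\lambda|} \sum_{|\mu|+|\nu|\leq N} \bigl\|(x,u)^\mu \partial_{(x,u)}^\nu [(1-\partial_t^2)^m f]^\lambda\bigr\|_2.
\]
Because $(1-\partial_t^2)^m f$ is Schwartz, each $L^2$-norm on the right decays in $\lambda$ faster than any polynomial, so the factor $(1+|\lambda|)^n$ is absorbed into a single $\lambda$-uniform constant $C_{m,N}(f)$, and the claim follows after dividing through.

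The only genuine obstacle is bookkeeping the $\lambda$-dependence of the coefficients of $P_{\alpha,\lambda}$: the factor $e^{N|\lambda|}$ is essentially forced by the estimates on $c(\lambda)$ and its derivatives in Lemma \ref{clambda} and survives in the final bound, while the polynomial prefactor $(1+|\lambda|)^n$ is harmless because it is swallowed by the rapid decay of $\bigl((x,u)^\mu \partial_{(x,u)}^\nu[(1-\partial_t^2)^m f]\bigr)^\lambda$ in $\lambda$. No new ingredient beyond \eqref{pointwise-barg-one}, \eqref{pointwise-barg-two}, and Lemma \ref{clambda} is required.
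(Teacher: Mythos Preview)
Your proposal is correct and follows essentially the same route as the paper. The paper simply says the proposition follows by ``combining \eqref{pointwise-barg-one} and \eqref{pointwise-barg-two}'', and what you have done---applying the reproducing-kernel bound to $P_{\alpha,\lambda}\bigl((1-\partial_t^2)^m f\bigr)^\lambda$ so that both the $\lambda$-decay and the $\zeta$-decay are produced in a single step---is precisely the intended combination, spelled out more explicitly than the paper bothers to.
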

   \vskip.1truein
 \begin{rem}The decay in the $ \zeta $ variable seems to be optimal. But we do not know if the exponential growth in  $ \lambda $ can be improved as it is a result of the behaviour of the coefficients $ a $ and $ b$ in the previous lemma.
 \end{rem}

We have two candidates for the modulation spaces  on $ \He^n .$  The most natural definition is the one where the Schr\"odinger representation  $ \pi_1 $ of $ \He^n $  is replaced by the representation $ \Pi $ of the group $G_n.$ Thus we may define $ M_0^{p,q}(\He^n) $ as the space of all tempered distributions $ f $ such that $ B_{\He}f(\zeta,0) =  \langle Tf, \Pi(\zeta,0) h_{1/2} \rangle, \zeta = \xi+i\eta $ belongs to the mixed norms space $ L^{p,q}(\R^{2n} \times \R^{2n}).$ 
In view of \eqref{esti-point} it follows that $ L^2(\He^n) \subset M_0^{2,2}(\He^n).$ However, it is  not clear  if $ L^2(\He^n) = M_0^{2,2}(\He^n).$ 
The identity \eqref{matrix-planch}  suggests that we can also define the modulation spaces in terms of the modified matrix coefficients $ \langle Tf, \Pi(\zeta,s) h_{1/2} \rangle $ and their $ L^2 $ norms with respect to the $s$ variable. \\

Let $ L^{p,q}(\R^{2n} \times \R^{2n}, L^2(\R)) $ stand for the space of functions $ F(\xi,\eta, \cdot) $ on $ \R^{2n} \times \R^{2n} $ taking values in $ L^2(\R)$ equipped with the obvious norm.  We say that a tempered distribution $ f $ belongs to $ M_1^{p,q}(\He^n) $ if  the function  $\langle Tf, \Pi(\xi+i\eta,s) p_{1/2} \rangle $ belongs to $ L^{p,q}(\R^{2n} \times \R^{2n}, L^2(\R)) .$ In view of \eqref{matrix-modi} this  is the same as saying that the function
\begin{equation}\label{definition} \|\langle Tf, \Pi(\xi+i\eta,\cdot) p_{1/2} \rangle\|_2  = \left(\int_{-\infty}^\infty | B_\lambda f^\lambda(\xi+i\eta)|^2 \, w_\lambda(\xi+i\eta) \, d\lambda \right)^{1/2} 
\end{equation}
belongs to $ L^{p,q}(\R^{2n} \times \R^{2n}).$ With this definition it is clear that $ M_1^{2,2}(\He^n) = L^2(\He^n).$ Moreover, we also have the inclusion $ M_1^{p,q}(\He^n) \subset M_0^{p,q}(\He^n)$  which is a consequence of \eqref{esti-point-one}. However, we are not able to prove the completeness of  $ M_1^{p,q}(\He^n) .$ \\

We get another definition by interchanging the roles of the mixed norms in $ (\xi,\eta) $ and the $ L^2 $ norm in $ \lambda.$  This amounts to requiring that  the $ L^{p,q}(\R^{2n} \times \R^{2n}) $ norm of $ B_\lambda f^\lambda(\zeta) \sqrt{w_\lambda(\zeta)} $ is square integrable with respect to $ \lambda.$ Note that this does not alter the $ p=q=2$ case but seems to be  a reasonable definition as we plan to demonstrate now.\\

 In order to make the definition $ M^{p,q}(\He^n) ,$ let examine the modified matrix coefficients more closely. By polarising the identity \eqref{matrix-planch} and using the fact that $\langle Tf, \Pi(\zeta,s) p_{1/2} \rangle = \langle f, \Pi(\zeta,s) Tp_{1/2} \rangle $ we get
\begin{equation}\label{matrix-parse}
\int_{\C^{2n}} \int_{-\infty}^\infty \langle f, \Pi(\zeta,s) Tp_{1/2} \rangle\, \langle \Pi(\zeta,s) Tp_{1/2}, g\rangle\, d\zeta\, ds  =  c_n^{\prime \prime}\, \int_{\He^n} f(h) \, \overline{g(h)} \,dh.
\end{equation}
The above identity gives us the following inversion formula:
\begin{equation}\label{h-inverse}
 f(h) =c_n \, \int_{\C^{2n}} \int_{-\infty}^\infty \langle f, \Pi(\zeta,s) Tp_{1/2} \rangle\, \Pi(\zeta,s) Tp_{1/2}(h)\, d\zeta\, ds.
\end{equation}
Let us set up some notations. Let $ \mathcal{H}_{p,q} = L^{p,q,2}(\R^{2n} \times \R^{2n} \times \R)$  be the mixed norm space  equipped with the norm $\| F \|_{p,q,2}.$ For a function or a tempered distribution $ F(\xi,\eta,s) $ on $ \R^{2n} \times \R^{2n} \times \R $ let $ F^\lambda (\xi,\eta) $ stand for its inverse Fourier  transform in the last variable.  We then  define 
$$ \widehat{\mathcal{H}}_{p,q} = \{ F \in \mathcal{S}^\prime(\R^{2n} \times \R^{2n} \times \R): F^\lambda(\xi,\eta) \in  \mathcal{H}_{p,q} \} $$  equipped with the norm $   \|F\|_{\widehat{\mathcal{H}}_{p,q}} =  \|F^\lambda\|_{\mathcal{H}_{p,q}} = \| F^\lambda \|_{p,q,2}.$  

\begin{rem} A word of caution about the notation. When $ f $ is a tempered distribution on $ \R^{m+1},$  the partial inverse Fourier transform $ \mathcal{F}^\ast_{m+1} f $ is again a tempered distribution. When  this distribution is given by a function, we write $ f^\lambda(x) $ instead of $ \mathcal{F}^\ast_{m+1} f .$ 
\end{rem}

We need the completeness of the spaces  $ \mathcal{H}_{p,q} $ and $  \widehat{ \mathcal{H}}_{p,q}$ proved in the following lemma.

 \begin{lem}\label{Hpq} Both  $ \mathcal{H}_{p,q} $ and $  \widehat{ \mathcal{H}}_{p,q}$ are Banach spaces. (The norm of $ F \in  \widehat{ \mathcal{H}}_{p,q}$ is the norm of $ F^\lambda $ in $   \mathcal{H}_{p,q}.$)
\end{lem}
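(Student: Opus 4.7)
The plan is to treat $\mathcal{H}_{p,q}$ first as a standard mixed-norm Lebesgue space and then transfer the conclusion to $\widehat{\mathcal{H}}_{p,q}$ via the partial Fourier transform in the last variable.

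For $\mathcal{H}_{p,q} = L^{p,q,2}(\R^{2n}\times\R^{2n}\times\R)$, I would verify in order:
(i) the functional $\|F\|_{p,q,2}$ is a norm on equivalence classes, with the triangle inequality obtained by iterating Minkowski's inequality three times (first in $s$ with exponent $2$, then in $\xi$ with exponent $p$, then in $\eta$ with exponent $q$); absolute homogeneity and definiteness are immediate;
(ii) completeness follows by the classical Benedek--Panzone argument: if $\{F_k\}$ is Cauchy in $\mathcal{H}_{p,q}$ with $\|F_{k+1}-F_k\|_{p,q,2} \leq 2^{-k}$, then one forms $G(\xi,\eta,s) = \sum_k |F_{k+1}(\xi,\eta,s)-F_k(\xi,\eta,s)|$ and uses Fatou/monotone convergence successively in $s$, then in $\xi$, then in $\eta$, to conclude that $G \in \mathcal{H}_{p,q}$; this forces the series $\sum(F_{k+1}-F_k)$ to converge absolutely a.e., hence pointwise a.e.\ to some $F$. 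A further application of Fatou then shows $\|F - F_k\|_{p,q,2} \to 0$.

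For $\widehat{\mathcal{H}}_{p,q}$, the key observation is that the partial inverse Fourier transform in the last variable, call it $\mathcal{F}^\ast_s$, is a continuous linear bijection on $\mathcal{S}'(\R^{4n+1})$. By definition the set $\widehat{\mathcal{H}}_{p,q}$ is the pre-image
$$ \widehat{\mathcal{H}}_{p,q} = (\mathcal{F}^\ast_s)^{-1}\bigl(\mathcal{H}_{p,q}\bigr), $$
where $\mathcal{H}_{p,q}$ is regarded as embedded in $\mathcal{S}'$ via the continuous inclusion $L^{p,q,2} \hookrightarrow \mathcal{S}'$ (a mixed-norm function pairs against Schwartz functions by an iterated application of Hölder's inequality, which one has to spell out briefly). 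The norm on $\widehat{\mathcal{H}}_{p,q}$ is, by construction, exactly the pullback of the $\mathcal{H}_{p,q}$-norm under $\mathcal{F}^\ast_s$, so $\mathcal{F}^\ast_s$ is an isometric isomorphism
$$ \mathcal{F}^\ast_s : \widehat{\mathcal{H}}_{p,q} \longrightarrow \mathcal{H}_{p,q}. $$
Completeness of $\widehat{\mathcal{H}}_{p,q}$ is then inherited from that of $\mathcal{H}_{p,q}$: a Cauchy sequence $\{F_k\}$ in $\widehat{\mathcal{H}}_{p,q}$ maps to a Cauchy sequence $\{F_k^\lambda\}$ in $\mathcal{H}_{p,q}$, whose limit $G \in \mathcal{H}_{p,q}$ is of the form $G = F^\lambda$ for some $F \in \mathcal{S}'$ (namely $F = \mathcal{F}_s G$), and by definition $\|F - F_k\|_{\widehat{\mathcal{H}}_{p,q}} = \|G - F_k^\lambda\|_{\mathcal{H}_{p,q}} \to 0$.

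The main technical point I expect to have to handle carefully is the three-fold Minkowski/Fatou bookkeeping needed in step (ii), and the verification that $L^{p,q,2} \subset \mathcal{S}'$ continuously so that the Fourier-transform identification in step (iii) is legitimate when some of $p,q$ equal $\infty$; everything else is formal transport of structure along an isometric isomorphism.
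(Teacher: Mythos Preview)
Your proposal is correct and follows essentially the same skeleton as the paper's proof: cite Benedek--Panzone for $\mathcal{H}_{p,q}$, then for $\widehat{\mathcal{H}}_{p,q}$ push a Cauchy sequence through $\mathcal{F}_s^\ast$ to $\mathcal{H}_{p,q}$, take the limit there, and pull it back. The one genuine difference is how the pullback is produced. You invoke directly that the partial Fourier transform in the last variable is a topological automorphism of $\mathcal{S}'(\R^{4n+1})$, so the limit $G\in\mathcal{H}_{p,q}\subset\mathcal{S}'$ automatically equals $F^\lambda$ for $F=\mathcal{F}_sG$. The paper instead constructs this $F$ by hand: it expands $G$ in the tensor Hermite basis $\Phi_\alpha\otimes\Phi_j$, notes that $\widehat{\Phi_j}=(-i)^j\Phi_j$, and defines $F$ by twisting the coefficients by $(-i)^j$, using the polynomial-growth characterisation of tempered distributions to justify convergence. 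Your route is shorter and more conceptual, since it treats the bijectivity of $\mathcal{F}_s$ on $\mathcal{S}'$ as a black box; the paper's Hermite-expansion argument is effectively a self-contained re-proof of that bijectivity in this special setting, which makes the construction of the preimage explicit but is not logically necessary.
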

\begin{proof} The completeness of  $ \mathcal{H}_{p,q}$ is immediate since it  is a mixed norm space, see \cite{BP}.  To prove the completeness of  $  \widehat{ \mathcal{H}}_{p,q},$ let $ F_k $ be Cauchy. Then by definition $ F_k^\lambda $ is Cauchy in $  \mathcal{H}_{p,q}.$ Hence  $ F_k^\lambda $ converges to some $ F \in  \mathcal{H}_{p,q}.$ It is enough to show that there exists $ G \in \widehat{ \mathcal{H}}_{p,q}$ such that $ F(\zeta,\lambda) = G^\lambda(\zeta).$  In order to show this we make use of the following well known fact. 

Let $ \Phi_\alpha, \alpha \in \mathbb{N}^m $ be the family of Hermite functions on $ \R^m.$ Then every tempered distribution $ F $ on $ \R^m $ has a formal expansion 
$ F = \sum_{\alpha \in \mathbb{N}^m}   \langle F, \Phi_\alpha\rangle \, \Phi_\alpha $
in the sense that for any $ \varphi \in \mathcal{S}(\R^m) $ the action of $ F $ on $ \varphi $ is given by the absolutely convergent series
$$ (F, \varphi) = \sum_{\alpha \in \mathbb{N}^m}   \langle F, \Phi_\alpha\rangle \,  \, \langle \varphi, \Phi_\alpha \rangle. $$
This is due to the fact that $ F $ is tempered if and only if $| \langle F, \Phi_\alpha\rangle | \leq C\, (1+|\alpha|)^N $ for some $ N $ whereas $ \varphi $ is Schwartz if and only if 
$| \langle \varphi, \Phi_\alpha \rangle | \leq C_j\, (1+|\alpha|)^{-j} $ for every $ j.$ 

Since the tempered distribution $ F $ is given by a function $ F(\xi,\eta,\lambda) $ we have the expansion
$$ F(\xi,\eta,\lambda) = \sum_{j=0}^\infty \sum_{\alpha \in \mathbb{N}^{4n}}   \langle F, \Phi_\alpha \otimes \Phi_j \rangle  \,  \Phi_\alpha (\xi,\eta)\,\Phi_j(\lambda). $$
As $ F $ is tempered, $   \langle F, \Phi_\alpha \otimes \Phi_j \rangle$ have polynomial growth and hence the formal expansion 
$$ G(\xi,\eta, t) = \sum_{j=0}^\infty \sum_{\alpha \in \mathbb{N}^{4n}}  (-i)^j\,  \langle F, \Phi_\alpha \otimes \Phi_j \rangle  \,  \Phi_\alpha (\xi,\eta)\,\Phi_j(t) $$
defines a tempered distribution. Since $ \widehat{\Phi_j}(\lambda) = (-i)^j\, \Phi_j(\lambda) $ it is immediate that $ G^\lambda(\xi,\eta) = F(\xi,\eta,\lambda) $ proving that $ G \in \widehat{ \mathcal{H}}_{p,q}.$ 
\end{proof}

For the sake of simplicity of notation, let us set
$ Vf(\zeta,s) =\langle f, \Pi(\zeta,s) Tp_{1/2} \rangle.$  We also write $ V_\lambda f(\zeta) $ in place of $ (Vf)^\lambda(\zeta).$
 We note that  $ V $ takes $ L^2(\He^n) $ into $ \widehat{\mathcal{H}}_{2,2} = L^2(\R^{2n} \times \R^{2n} \times \R).$  This is a consequence of \eqref{matrix-modi} and the fact that  $B_\lambda $ takes $ L^2(\R^{2n}) $ isometrically into $ L^2(\R^{2n}\times \R^{2n}).$  This suggests that we make the following definition of  modulation spaces on $ \He^n.$\\
 
We say that a tempered distribution $ f $ belongs to $ M^{p,q}(\He^n) $ if $ Vf(\zeta,s) $  belongs to $ \widehat{ \mathcal{H}}_{p,q}.$ In other words the function $ V_\lambda f(\zeta) \in L^{p,q}(\R^{2n} \times \R^{2n}) $ and 
the integral 
$$ \|f\|_{(p,q)}^2 =:  \int_{-\infty}^\infty \| V_\lambda f \|_{p,q}^2\, d\lambda  =  \int_{-\infty}^\infty \| B_\lambda f^\lambda  \sqrt{w_\lambda}\, \|_{p,q}^2\, d\lambda< \infty $$
is finite. We equip $ M^{p,q}(\He^n) $ with the norm $ \|f\|_{(p,q)} $ defined above.\\
 
 \begin{rem}\label{rem} Observe that when $ f \in M^{p,q}(\He^n),$  the function $ B_\lambda f^\lambda(\zeta) \sqrt{w_\lambda(\zeta)} $ belongs to the mixed norm space $ L^{p,q}(\R^{2n}\times \R^{2n})$  for  a.e $ \lambda.$ Hence $ f^\lambda \in M_\lambda^{p,q}(\R^{2n}) $ and we may be tempted to define  $ M^{p,q}(\He^n) $ by replacing $ p_{1/2} $ in the definition by any Schwartz class function $ g $ on $ \He^n.$ Thus  we may say that $ f \in M^{p,q}(\He^n) $ if and only if  the inverse Fourier transform of  $ \langle Tf, \Pi(\zeta,s)g \rangle $ in the $ s $ variable belongs to $ M_\lambda^{p,q} (\R^{2n}) $ in such a way that
 $$ \int_{-\infty}^\infty  c_\lambda\, \| \langle f^\lambda, \Pi_\lambda(\zeta)g^\lambda\rangle\|_{p,q}^2\, d\lambda <\infty.$$
 This is certainly a better definition if  we can show that it is independent of the choice of $ g.$ Unfortunately, it is not the case- if the above integral is finite for a $ g $ there is no guarantee that it will be finite for  other choices of $ g.$ Moreover, only with $ g = p_{1/2} $ we can  get the desired identification $ M^{2,2}(\He^n)= L^2(\He^n).$\\
 \end{rem}

 The identity \eqref{h-inverse} suggests that we define an operator $ \widetilde{V} $ acting on  functions $ F $ defined on $ \R^{2n} \times \R^{2n} \times \R$  as follows:
\begin{equation}\label{vtilde-def} \widetilde{V}F(h) =\int_{\C^{2n}} \int_{-\infty}^\infty F(\zeta,s)\, \Pi(\zeta,s) Tp_{1/2}(h)\, d\zeta\, ds .
\end{equation}
It is then clear from \eqref{h-inverse} that $ \widetilde{V} ( Vf)  = f .$  We will  show that $ \widetilde{V}F $ makes sense for all $ F \in  \widehat{\mathcal{H}}_{p,q} .$  Consider the functional 
\begin{equation}\label{vtilde-def-one} \langle \widetilde{V}F, g \rangle = \int_{\C^{2n}} \int_{-\infty}^\infty F^\lambda(\zeta) \, \overline{V_\lambda g(\zeta) }\, \, d\lambda  \, d\zeta  
\end{equation}
where $ g $ is a Schwartz class function  on $ \He^n.$ From \eqref{matrix-modi} we have 
$$ \langle g, \Pi(\zeta,s) Tp_{1/2} \rangle = c_n \int_{-\infty}^\infty e^{-i\lambda s} B_\lambda g^\lambda(\zeta)\, \sqrt{w_\lambda(\zeta)}\, d\lambda$$
and hence we can use the estimate proved in Proposition \ref{point-barg} to get the estimate
$$ |\langle \widetilde{V}F, g \rangle | \leq  C_N(g)  \int_{-\infty}^\infty \| F^\lambda\|_{p,q}\, e^{N|\lambda|}\, d\lambda $$
where $ C_N(g) $ is a Schwartz seminorm. Therefore, if we assume that $ F^\lambda $ is compactly supported in $ \lambda,$ then $ \widetilde{V}F $ is seen to be a tempered distribution. We can take 
\eqref{vtilde-def-one} as an equivalent definition of $ \widetilde{V}F.$
\\

\begin{lem} \label{v-vtilde}The operator   $ V \circ \widetilde{V} $ initially defined on the subspace of band limited functions in the $ s$ variable has an extension to  the whole of  $  \widehat{\mathcal{H}}_{p,q} .$ Moreover, $ \widetilde{V} ( Vf)  = f $ holds for all $ f \in M^{p,q}(\He^n).$
\end{lem}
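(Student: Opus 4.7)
The plan is to transfer both assertions to the level of fibers in $\lambda$ via the partial Fourier transform in the central variable, where the twisted modulation space theory of Lemma \ref{polarised-id} applies directly.

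For a band limited $F \in \widehat{\mathcal{H}}_{p,q}$, I would first take the inverse Fourier transform in $t$ of the defining equation \eqref{vtilde-def} and use $(Tp_{1/2})^\lambda = \sqrt{c_\lambda}\, p_{1/2}^\lambda$ to establish the fiber identity
\[
(\widetilde{V}F)^\lambda(x,u) \;=\; \sqrt{c_\lambda}\ \widetilde{V_{p_{1/2}^\lambda}^\lambda}(F^\lambda)(x,u),
\]
where $\widetilde{V_g^\lambda}$ is the operator from Lemma \ref{polarised-id}. Combined with $V_\lambda h = c_n \sqrt{c_\lambda}\, V_{p_{1/2}^\lambda}^\lambda h$ coming from \eqref{matrix-modi}, this yields
\[
(V\widetilde{V}F)^\lambda \;=\; c_n\, c_\lambda\ V_{p_{1/2}^\lambda}^\lambda \circ \widetilde{V_{p_{1/2}^\lambda}^\lambda}(F^\lambda).
\]
Lemma \ref{polarised-id}, together with the explicit formula $B_\lambda p_{1/2}^\lambda \equiv c_\lambda$ (a consequence of the semigroup identity $p_{1/2}^\lambda \ast_\lambda p_{1/2}^\lambda = p_1^\lambda$), yields a fiber bound $\|(V\widetilde{V}F)^\lambda\|_{p,q}\le C(\lambda)\|F^\lambda\|_{p,q}$ whose constant is controlled by the rapid decay of $c_\lambda$ and the Gaussian structure of $w_\lambda$. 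Squaring and integrating in $\lambda$ would then give $\|V\widetilde{V}F\|_{\widehat{\mathcal{H}}_{p,q}} \le C\|F\|_{\widehat{\mathcal{H}}_{p,q}}$, and $V\widetilde{V}$ extends to all of $\widehat{\mathcal{H}}_{p,q}$ by density of the band limited functions (compactly supported in $\lambda$ functions are dense in $L^2_\lambda$).

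For the inversion $\widetilde{V}(Vf) = f$, for $f \in M^{p,q}(\He^n)$ I would pair $\widetilde{V}(Vf)$, interpreted via the functional \eqref{vtilde-def-one}, against a Schwartz test function $g \in \mathcal{S}(\He^n)$. Substituting $(Vf)^\lambda = c_n' B_\lambda f^\lambda \sqrt{w_\lambda}$ and the analogous expression for $V_\lambda g$ from \eqref{matrix-modi}, and then using unitarity of $B_\lambda : L^2(\R^{2n}) \to \Fs$ together with Plancherel in the central variable, the pairing collapses to
\[
\langle \widetilde{V}(Vf), g\rangle \;=\; c_n\int_{-\infty}^\infty \int_{\R^{2n}} f^\lambda(x,u)\,\overline{g^\lambda(x,u)}\,dx\,du\,d\lambda \;=\; c_n\,\langle f, g\rangle,
\]
which is the desired identity of tempered distributions (up to the constant $c_n$). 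The interchange of integrals is legitimate because, by Proposition \ref{point-barg}, $V_\lambda g(\zeta)$ has rapid decay both in $\zeta$ and in $\lambda$, while $(Vf)^\lambda$ lies in $L^{p,q}(\R^{2n}\times\R^{2n})$ for a.e. $\lambda$ with norm square-integrable in $\lambda$.

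The principal obstacle is the careful bookkeeping of the $\lambda$-dependent constants: one must verify that the product $c_n c_\lambda\cdot \|V_{p_{1/2}^\lambda}^\lambda\,p_{1/2}^\lambda\|_{1,1}$ appearing in Lemma \ref{polarised-id} remains bounded in $\lambda$ so as not to destroy the $L^2_\lambda$-integrability built into the norm of $\widehat{\mathcal{H}}_{p,q}$. This relies on the computation $V_{p_{1/2}^\lambda}^\lambda\, p_{1/2}^\lambda(\zeta) = d_n^{-1}\sqrt{c_\lambda}\,\sqrt{w_\lambda(\zeta)}$ together with the explicit Gaussian form of $w_\lambda$, which ensures that the $L^1$ norm is finite and grows at most polynomially in $\lambda$, easily compensated by the exponential decay of $c_\lambda$.
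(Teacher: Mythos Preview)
Your approach is essentially the paper's: both transfer to the $\lambda$-fibers, bound $(V\widetilde{V}F)^\lambda$ by a convolution of $|F^\lambda|$ against the kernel $c_\lambda\,|\langle p_{1/2}^\lambda,\Pi_\lambda(\zeta)p_{1/2}^\lambda\rangle|$ (which is exactly the content of Lemma~\ref{polarised-id} with $g=h=p_{1/2}^\lambda$), extend by density of band limited functions, and then prove the inversion by pairing against a Schwartz $g$ and collapsing the $\zeta$-integral.

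Two imprecisions deserve correction. First, your claim that $\|V_{p_{1/2}^\lambda}^\lambda p_{1/2}^\lambda\|_{1,1}$ grows at most polynomially is false: since $|V_{p_{1/2}^\lambda}^\lambda p_{1/2}^\lambda(\zeta)|=d_n^{-1}\sqrt{c_\lambda}\,\sqrt{w_\lambda(\zeta)}$ and $\|\sqrt{w_\lambda}\|_1\sim c_\lambda^{-3/2}$, the $L^1$ norm grows like $c_\lambda^{-1}$, i.e.\ exponentially. What actually happens---and what the paper verifies by the explicit Gaussian computation leading to \eqref{K lambda explicit expression}---is that the product $c_\lambda\,\|V_{p_{1/2}^\lambda}^\lambda p_{1/2}^\lambda\|_{1,1}$ is an absolute dimensional constant: the exponential growth and decay cancel \emph{exactly}, not by crude majorization. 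Second, for the inversion step, ``unitarity of $B_\lambda$'' is only available when $f^\lambda\in L^2(\R^{2n})$, which need not hold for general $f\in M^{p,q}(\He^n)$; the paper instead reduces to band limited $f$ and then invokes the orthogonality relation \eqref{orthogonality relation} in its distributional form (established in the proof of Lemma~\ref{polarised-id}), which yields $\int_{\C^{2n}}V_\lambda f\,\overline{V_\lambda g}\,d\zeta=c_n\langle f^\lambda,g^\lambda\rangle$ directly. A minor omission: your fiber identities drop the unimodular phase $e^{i\frac{\lambda}{2}(\coth\lambda)\,\xi\cdot\eta}$ from \eqref{pi-rep}, harmless for the norm bound but present in the paper's \eqref{vlambda}.
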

\begin{proof} For $ F \in   \widehat{\mathcal{H}}_{p,q} $ which is band limited in the $ s $ variable, we consider $ (V\circ \widetilde{V})F.$ This given by the integral
\begin{equation}\label{vvtilde} \langle \widetilde{V}F, \Pi(\zeta,s)Tp_{1/2}\rangle = \int_{\C^{2n}} \int_{-\infty}^\infty F(\zeta^\prime,s^\prime)\, \langle \Pi(\zeta^\prime,s^\prime) Tp_{1/2}, \Pi(\zeta,s)Tp_{1/2}\rangle \, d\zeta^\prime \, ds^\prime.
\end{equation}
The above integral has to be interpreted as the action of the tempered distribution $ \widetilde{V}F $ acting on the Schwartz function $\Pi(\zeta,s)Tp_{1/2}$ on $ \He^n,$ see Lemma \ref{schwartz}.
As $ \Pi(\zeta,s) $ is defined in terms of $ \Pi_\lambda(\zeta,s)$ we see that
$$\langle \Pi(\zeta^\prime,s^\prime) Tp_{1/2}, \Pi(\zeta,s)Tp_{1/2}\rangle  =  \int_{-\infty}^\infty  c_\lambda\,\,  \langle \Pi_\lambda(\zeta^\prime,s^\prime) p_{1/2}^\lambda, \Pi_\lambda(\zeta,s)p_{1/2}^\lambda\rangle  \, d\lambda. $$
As $ \Pi_\lambda $ is a representation of the group $ G_n $  and $ \Pi_\lambda (\zeta^\prime,s^\prime)^\ast = \Pi_\lambda (-\zeta^\prime,-s^\prime)$ the above becomes
$$\langle \Pi(\zeta^\prime,s^\prime) Tp_{1/2}, \Pi(\zeta,s)Tp_{1/2}\rangle  =  \int_{-\infty}^\infty c_\lambda\,  \langle  p_{1/2}^\lambda, \Pi_\lambda((-\zeta^\prime,-s^\prime)(\zeta,s))p_{1/2}^\lambda\rangle  \, d\lambda. $$
Recalling the group law and the definition of $ \Pi_\lambda $ we see that the above integral becomes
$$  \int_{-\infty}^\infty e^{i\lambda(s-s^\prime)} e^{i\frac{1}{2} (\lambda \Re[\zeta,\bar{\zeta^\prime}]- (\coth \lambda) \Im(\zeta \cdot \bar{\zeta^\prime}))}c_\lambda\, \langle  p_{1/2}^\lambda, \Pi_\lambda(\zeta-\zeta^\prime)p_{1/2}^\lambda\rangle  \, d\lambda. $$
Thus the inverse Fourier transform of $\langle \Pi(\zeta^\prime,s^\prime) Tp_{1/2}, \Pi(\zeta,s)Tp_{1/2}\rangle$ in the $ s^\prime $ variable evaluated at $ \lambda $ is equal to
$$e^{i\lambda s} \,e^{i\frac{1}{2} (\lambda \Re[\zeta,\bar{\zeta^\prime}]- (\coth \lambda) \Im(\zeta \cdot \bar{\zeta^\prime}))} \, c_\lambda\,  \langle  p_{1/2}^\lambda, \Pi_\lambda(\zeta-\zeta^\prime)p_{1/2}^\lambda\rangle.$$
Rewriting the integral on the right hand side of \eqref{vvtilde} using Plancherel in the $ s^\prime $ variable we can express the function $ G(\zeta,s) =  \langle \widetilde{V}F, \Pi(\zeta,s)Tp_{1/2}\rangle$ as
\begin{equation}\label{vvtildelambda}
\int_{\C^{2n}} \int_{-\infty}^\infty F^\lambda(\zeta^\prime)\,e^{i\lambda s} \,e^{i\frac{1}{2} (\lambda \Re[\zeta,\bar{\zeta^\prime}]- (\coth \lambda) \Im(\zeta \cdot \bar{\zeta^\prime}))} \, c_\lambda\,  \langle  p_{1/2}^\lambda, \Pi_\lambda(\zeta-\zeta^\prime)p_{1/2}^\lambda\rangle d\zeta^\prime \, d\lambda.
\end{equation}
By defining $  K_\lambda(\zeta) =  c_\lambda\,   \langle p_{1/2}^\lambda, \Pi_\lambda(\zeta)p_{1/2}^\lambda\rangle,$ the above leads to the inequality
$$ |G^\lambda(\zeta)| \leq  \int_{\C^{2n}} |F^\lambda(\zeta^\prime)|\, \, | K_\lambda(\zeta-\zeta^\prime)|\,  d\zeta^\prime \,.$$
We claim  that $ K_\lambda \in L^1(\R^{2n}\times \R^{2n}),$ uniformly in $ \lambda .$ Assuming the claim for a moment,  by Young's inequality for mixed norm spaces we get
$$ \| G^\lambda\|_{p,q}  \leq \| K_\lambda \|_{1,1}\, \|F^\lambda\|_{p,q} \leq C\, \|F^\lambda\|_{p,q} . $$
Once we have the above, taking $ L^2 $ norm in the $ \lambda$ variable  we get the estimate
$$ \| (V \circ \widetilde{V})F \|_{\widehat{\mathcal{H}}_{p,q}} \leq C\,  \|F\|_{\widehat{\mathcal{H}}_{p,q}} .$$ 
The important point to note is that the constant $ C $ in the above inequality is independent of the support of $ F^\lambda.$  Let $ \varphi $ be a Schwartz function on $ \R $ whose Fourier transform is supported on $ [-2,2] $ and  identically  1 on [-1,1].  Defining $ \varphi_\epsilon(t) = \epsilon^{-1} \varphi(\epsilon^{-1}t) $ let
$$ F_\epsilon(\zeta,s) =: F \ast_3 \varphi_\epsilon(\zeta,s) = \int_{-\infty}^\infty F(\zeta,s-t)\, \varphi_\epsilon(t)\, dt.$$
Then $ F_\epsilon $ is band limited and converges to $ F $ in $\widehat{\mathcal{H}}_{p,q}.$  We can then define $  (V \circ \widetilde{V})F $ as the limit of $ (V \circ \widetilde{V})F_\epsilon.$ \\

Returning to our claim, the required property of the kernel $ K_\lambda $ is proved by explicit calculation. Indeed, in view of \eqref{matrix-barg}, as $ B_\lambda p_{1/2}^\lambda = c_\lambda,$ we obtain
\begin{equation}\label{K lambda explicit expression}  K_\lambda(\xi,\eta) =  d_n \, c_\lambda\, \langle  p_{1/2}^\lambda, \Pi_\lambda(\xi,\eta) p_{1/2}^\lambda \rangle =  c_\lambda^2\,  \,e^{-\frac{1}{4}\lambda (\coth \lambda)(|\xi|^2+|\eta|^2)}\, e^{\frac{1}{2}\lambda [\xi,\eta]}.
\end{equation}
With $ \xi =(x,u)$ and $ \eta = (y,v)$ we have $ [\xi,\eta] = (u \cdot y-v \cdot x)$ and hence we can rewrite 
$$ K_\lambda(\xi,\eta) = c_\lambda^2\,  \, e^{-\frac{1}{8} \lambda \left( \coth \lambda +1 \right)( (x+v)^2+(u-y)^2)}\, e^{-\frac{1}{8}\lambda \left( \coth \lambda -1 \right)( (x-v)^2+(u+y)^2)}.$$ 
By introducing the new variables $ x^\prime = x+v, v^\prime = x-v, u^\prime = u-y, y^\prime = u+y , \, \xi^\prime =(x^\prime,u^\prime) $ and $ \eta^\prime =(y^\prime,v^\prime)$ we have
$$ \int_{\R^{2n}}\int_{\R^{2n}} K_\lambda(\xi,\eta) d\xi\, d\eta= c_\lambda^2\,  \, \int_{\R^{2n}}\int_{\R^{2n}}e^{-\frac{1}{8} \lambda \left( \coth \lambda +1 \right)|\xi^\prime|^2}\, e^{-\frac{1}{8}\lambda \left( \coth \lambda -1 \right) |\eta^\prime|^2}\, d\xi^\prime d \eta^\prime.$$
Evaluating the integral on the right hand side we see that 
$$ \int_{\R^{2n}}\int_{\R^{2n}} K_\lambda(\xi,\eta) d\xi\, d\eta= c_n\,  \lambda^{2n} (\sinh \lambda)^{-2n} \big( \lambda^2 (\coth\lambda+1)(\coth \lambda-1)\big)^{-n} =c_n. $$
This proves the required property of $ K_\lambda.$\\

Finally, we need to show that $ \widetilde{V}(Vf) = f $ for all $ f \in M^{p,q}(\He^n).$ With notation as above, note that $ V(f\ast_3 \varphi_\epsilon) = Vf \ast_3 \varphi_\epsilon$ and hence  $ f \ast_3 \varphi_\epsilon $ converges to $ f $ in $M^{p,q}(\He^n).$ We can therefore assume that $ f $ is band limited in the $ t $ variable so that $ Vf(\zeta,s) $ is band limited in the $ s $ variable.  To prove  $ \widetilde{V}(Vf) = f $  let us check the action of the tempered distribution $ \widetilde{V}(Vf) $ on a Schwartz function $ g.$ Thus
$$ \langle \widetilde{V}(Vf), g \rangle  = \int_{\C^{2n}} \int_{-\infty}^\infty  V_\lambda f(\zeta)\,  \, \overline{V_\lambda g(\zeta) }\, \, d\lambda  \, d\zeta  .$$
Interchanging the order of integration, the integral over $ \C^{2n} $ can be evaluated using the properties of the operators $ V_h^\lambda $ proved  in Lemma \ref{polarised-id}. Indeed, by recalling the respective definitions and taking $ h = p_{1/2}^\lambda $ we easily  calculate that
\begin{equation}\label{vlambda} V_\lambda f(\zeta) =c_n\, \sqrt{c_\lambda}\, e^{-i \frac{\lambda}{2} (\coth \lambda) \xi \cdot \eta }\,  V_h^\lambda f^\lambda (\zeta) 
\end{equation}  
and a similar expression for $ V_\lambda g.$ By appealing to the orthogonality relation \eqref{orthogonality relation} we
$$\int_{\C^{2n}}   V_\lambda f(\zeta)\,  \, \overline{V_\lambda g(\zeta) }\,   \, d\zeta  = c_n \, c_\lambda^{-1}\,  \langle p_{1/2}^\lambda, p_{1/2}^\lambda \rangle \,\int_{\R^{2n}} f^\lambda(x,u)\, \overline{g^\lambda(x,u)}\, dx\,du.$$
A simple calculation using the explicit expression for $ p_{1/2}^\lambda $ shows that $\langle p_{1/2}^\lambda, p_{1/2}^\lambda \rangle = d_n c_\lambda $ and hence 
$$\int_{\C^{2n}}   V_\lambda f(\zeta)\,  \, \overline{V_\lambda g(\zeta) }\,   \, d\zeta  = c_n  \,\int_{\R^{2n}} f^\lambda(x,u)\, \overline{g^\lambda(x,u)}\, dx\,du.$$ 
Integrating the above with respect $ \lambda $  we obtain  $\langle \widetilde{V}(Vf), g \rangle = c_n \,\langle f, g\rangle $ proving the result.
\end{proof}
From the above lemma we infer that the operator $ T = V \circ \widetilde{V} $ is bounded on $ \widehat{\mathcal{H}}_{p,q} .$ We now show that $ T $ is self-adjoint, a property which will be used later in computing the dual of $ M^{p,q}(\He^n).$

\begin{lem}\label{v-vtilde-adjoint}  For any $ F \in  \widehat{\mathcal{H}}_{p,q} $ and $ G \in  \widehat{\mathcal{H}}_{p^\prime,q^\prime} $ let  $ F_0 = TF, G_0 = TG.$ Then
$$ \int_{-\infty}^\infty \langle F_0^\lambda, G^\lambda \rangle \, d\lambda =  \int_{-\infty}^\infty \langle F^\lambda, G_0^\lambda \rangle \, d\lambda.$$ 
\end{lem}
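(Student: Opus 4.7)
The plan is to exploit the explicit kernel representation of $T = V \circ \widetilde{V}$ on the partial Fourier transform side that already emerged in the proof of Lemma \ref{v-vtilde}. From equation \eqref{vvtildelambda}, inverting the Fourier transform in the $s$ variable yields
$$(TF)^\lambda(\zeta) = \int_{\C^{2n}} F^\lambda(\zeta')\, M_\lambda(\zeta,\zeta')\, d\zeta', \qquad M_\lambda(\zeta,\zeta') = e^{\frac{i}{2}(\lambda \Re[\zeta,\overline{\zeta'}] - (\coth \lambda)\Im(\zeta \cdot \overline{\zeta'}))}\, K_\lambda(\zeta-\zeta'),$$
with $K_\lambda$ as in \eqref{K lambda explicit expression}. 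The first step is to verify the Hermitian symmetry $M_\lambda(\zeta',\zeta) = \overline{M_\lambda(\zeta,\zeta')}$. Writing $\zeta = \xi+i\eta$ and $\zeta' = \xi'+i\eta'$, a direct calculation gives $\Re[\zeta,\overline{\zeta'}] = [\xi,\xi'] + [\eta,\eta']$ and $\Im(\zeta \cdot \overline{\zeta'}) = \eta \cdot \xi' - \xi \cdot \eta'$; both quantities are real and antisymmetric under $\zeta \leftrightarrow \zeta'$, so the phase factor conjugates. The explicit formula \eqref{K lambda explicit expression} shows that $K_\lambda$ is real-valued and invariant under $(\xi,\eta) \mapsto (-\xi,-\eta)$, hence $K_\lambda(\zeta' - \zeta) = K_\lambda(\zeta - \zeta') = \overline{K_\lambda(\zeta-\zeta')}$. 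Combining these two observations yields the desired Hermitian symmetry of $M_\lambda$.

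Assume first that $F$ and $G$ are band limited in the $s$ variable, so that $F^\lambda$ and $G^\lambda$ are compactly supported in $\lambda$. Inserting the kernel representation into the left-hand side and applying Fubini, then using $M_\lambda(\zeta,\zeta') = \overline{M_\lambda(\zeta',\zeta)}$ and relabelling the dummy variables,
\begin{align*}
\int_{-\infty}^\infty \langle F_0^\lambda, G^\lambda \rangle \, d\lambda &= \int_{-\infty}^\infty \int_{\C^{2n}} \int_{\C^{2n}} F^\lambda(\zeta')\, M_\lambda(\zeta,\zeta')\, \overline{G^\lambda(\zeta)}\, d\zeta\, d\zeta'\, d\lambda \\
&= \int_{-\infty}^\infty \int_{\C^{2n}} \int_{\C^{2n}} F^\lambda(\zeta')\, \overline{M_\lambda(\zeta',\zeta)\, G^\lambda(\zeta)}\, d\zeta\, d\zeta'\, d\lambda \\
&= \int_{-\infty}^\infty \int_{\C^{2n}} F^\lambda(\zeta')\, \overline{G_0^\lambda(\zeta')}\, d\zeta'\, d\lambda = \int_{-\infty}^\infty \langle F^\lambda, G_0^\lambda \rangle \, d\lambda.
\end{align*}
The use of Fubini is justified by the uniform bound $\|K_\lambda\|_{1,1} \leq c_n$ proved in Lemma \ref{v-vtilde}, Young's inequality for mixed norm spaces, and H\"older's inequality between $\mathcal{H}_{p,q}$ and $\mathcal{H}_{p',q'}$; compactness of the $\lambda$-supports removes any concern about the outer integral.

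Finally, I would remove the band-limited hypothesis by the mollification $F_\epsilon = F \ast_3 \varphi_\epsilon,\ G_\epsilon = G \ast_3 \varphi_\epsilon$ used in Lemma \ref{v-vtilde}, which converge to $F$ and $G$ in $\widehat{\mathcal{H}}_{p,q}$ and $\widehat{\mathcal{H}}_{p',q'}$ respectively. Since $T$ is bounded on both of these spaces by Lemma \ref{v-vtilde}, and since the pairing $(F,G) \mapsto \int \langle F^\lambda, G^\lambda \rangle\, d\lambda$ is continuous by H\"older's inequality applied to the mixed norms in $(\xi,\eta)$ together with the $L^2$-pairing in the transverse variable, both sides of the identity pass to the limit and the general case follows.

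The main obstacle is the delicate Fubini step: the kernel $M_\lambda$ has only the modest uniform $L^1$-control supplied by $\|K_\lambda\|_{1,1} \leq c_n$, and the $\lambda$-integral has no built-in decay. Compactly supporting $F^\lambda$ and $G^\lambda$ in $\lambda$ via mollification is what allows everything to be summed absolutely; once this is arranged, the symmetry of $M_\lambda$ turns the self-adjointness into a transparent change of variables.
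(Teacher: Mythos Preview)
Your proof is correct and follows essentially the same route as the paper: both use the kernel representation of $(TF)^\lambda$ obtained from \eqref{vvtildelambda}, verify that the kernel $M_\lambda(\zeta,\zeta')$ is Hermitian-symmetric, and then swap the order of integration to conclude $\langle F_0^\lambda, G^\lambda\rangle = \langle F^\lambda, G_0^\lambda\rangle$. Your write-up is simply more explicit---you actually check that $\Re[\zeta,\overline{\zeta'}]$ and $\Im(\zeta\cdot\overline{\zeta'})$ are antisymmetric and that $K_\lambda$ is real and even, and you spell out the band-limited approximation to justify Fubini---whereas the paper just asserts that the identity follows after ``changing the order of integration and simplifying.''
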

\begin{proof} We have already computed $ F_0(\zeta,s) $ in \eqref{vvtilde} in the proof of Lemma \ref{v-vtilde}. From \eqref{vvtildelambda} we obtain the following expression:
$$ F_0^\lambda(\zeta) = \int_{\C^{2n}}  F^\lambda(\zeta^\prime) \,e^{i\frac{1}{2} (\lambda \Re[\zeta,\bar{\zeta^\prime}]- (\coth \lambda) \Im(\zeta \cdot \bar{\zeta^\prime}))} \, c_\lambda\,  \langle  p_{1/2}^\lambda, \Pi_\lambda(\zeta-\zeta^\prime)p_{1/2}^\lambda\rangle \, d\zeta^\prime \, .$$
Integrating $ F_0^\lambda(\zeta) $ against $ \overline{G^\lambda(\zeta)}, $ changing the order of integration and simplifying we can easily check that  $\langle F_0^\lambda, G^\lambda \rangle =\langle F^\lambda, G_0^\lambda \rangle$ which proves the lemma.
\end{proof}

With the above lemmas in hand, we can now establish some basic properties of the modulation spaces $ M^{p,q}(\He^n).$

\begin{thm}\label{Msp invariant under U} $ M^{p,q}(\He^n) $ are Banach spaces for  any $ 1 \leq p,q \leq \infty $ which are invariant under $ \Pi(\zeta^\prime,s^\prime)$ for all  $ (\zeta^\prime,s^\prime) \in \C^{2n} \times \R.$ Moreover, $ M^{p,p}(\He^n) $ are invariant under $ U.$
\end{thm}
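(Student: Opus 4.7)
For completeness of $M^{p,q}(\He^n)$, let $(f_k)$ be a Cauchy sequence; by definition $F_k := Vf_k$ is Cauchy in $\widehat{\mathcal{H}}_{p,q}$, which is a Banach space by Lemma \ref{Hpq}, so $F_k \to F$ for some $F \in \widehat{\mathcal{H}}_{p,q}$. Lemma \ref{v-vtilde} shows that $T := V \circ \widetilde V$ is a bounded projection with $T(Vf) = Vf$ for every $f \in M^{p,q}(\He^n)$, so passing to the limit in $TF_k = F_k$ yields $TF = F$, i.e.\ $F$ lies in the range of $T$. Setting $f := \widetilde V F$ (rigorously defined through the band-limited mollification $F_\epsilon = F \ast_3 \varphi_\epsilon$ used to extend $T$ in Lemma \ref{v-vtilde}), one obtains $Vf = V \widetilde V F = TF = F$, so $f \in M^{p,q}(\He^n)$ and $\|f_k - f\|_{(p,q)} = \|F_k - F\|_{\widehat{\mathcal{H}}_{p,q}} \to 0$.

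For invariance under $\Pi(\zeta',s')$: since $\Pi(\zeta',s')$ preserves $\mathcal{S}(\He^n)$ by Lemma \ref{schwartz}, it extends to $\mathcal{S}'(\He^n)$ by duality. A direct computation with the central Fourier transform gives $(\Pi(\zeta',s')f)^\lambda = \Pi_\lambda(\zeta',s')f^\lambda$; combined with $\Pi_\lambda(\zeta') = B_\lambda^* \rho_\lambda(\zeta') B_\lambda$ and the unimodularity of the scalar $e^{i\lambda s'}$ for $s' \in \R$, this yields $|B_\lambda(\Pi(\zeta',s')f)^\lambda(\zeta)| = |\rho_\lambda(\zeta') B_\lambda f^\lambda(\zeta)|$. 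The decisive point is the pointwise identity
\[
|\rho_\lambda(\zeta') F(\zeta)|^2 \, w_\lambda(\zeta) = |F(\zeta - \zeta')|^2 \, w_\lambda(\zeta - \zeta'),
\]
the local counterpart of the unitarity of $\rho_\lambda$ on $\Fs$. It follows by expanding \eqref{fock-rep} and $w_\lambda$ in real coordinates $\zeta = \xi + i\eta$, $\zeta' = \xi' + i\eta'$, and using the cancellation $\Im[\zeta,\overline{\zeta'}] + [\Re \zeta,\Im \zeta'] + [\Re \zeta',\Im \zeta] = 0$. Combined with \eqref{matrix-barg} to express $V_\lambda f$ in terms of $B_\lambda f^\lambda \sqrt{w_\lambda}$, this gives $|V_\lambda(\Pi(\zeta',s')f)(\zeta)| = |V_\lambda f(\zeta - \zeta')|$; translation invariance of Lebesgue measure on $\C^{2n} \cong \R^{2n} \times \R^{2n}$ preserves the mixed norm $\|\cdot\|_{p,q}$, and integrating the square in $\lambda$ yields $\|\Pi(\zeta',s')f\|_{(p,q)} = \|f\|_{(p,q)}$.

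For invariance of $M^{p,p}(\He^n)$ under $U$ (i.e.\ $\widetilde U$): from $(\widetilde U f)^\lambda = U_\lambda f^\lambda$, the relation $B_\lambda \circ U_\lambda = U \circ B_\lambda$, and $U F(\zeta) = F(-i\zeta)$, one obtains $B_\lambda(U_\lambda f^\lambda)(\zeta) = B_\lambda f^\lambda(-i\zeta)$. With $\zeta = \xi + i\eta$ we have $-i\zeta = \eta - i\xi$; the map $(\xi,\eta) \mapsto (\eta,-\xi)$ has unit Jacobian, and $w_\lambda(-i\zeta) = w_\lambda(\zeta)$ because both $|\zeta|^2$ and $[\Re\zeta,\Im\zeta]$ are preserved under $\zeta \mapsto -i\zeta$. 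When $p = q$ the mixed norm reduces to an ordinary $L^p$ norm on $\R^{4n}$ and is invariant under any measure-preserving transformation, so $\|V_\lambda(\widetilde U f)\|_{p,p} = \|V_\lambda f\|_{p,p}$; integration in $\lambda$ concludes. The restriction $p=q$ is genuinely needed because the involution $(\xi,\eta) \mapsto (\eta,-\xi)$ swaps the inner and outer integration variables of the mixed norm.

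The main technical obstacle is making the completeness argument rigorous: $\widetilde V F$ in \eqref{vtilde-def-one} is only directly defined when $F^\lambda$ is compactly supported in $\lambda$, so for a general $F$ in the range of the projection $T$ one must construct $f$ via the band-limited approximation $F_\epsilon$ from the proof of Lemma \ref{v-vtilde}, verify that $f_\epsilon := \widetilde V F_\epsilon$ converges in $\mathcal{S}'(\He^n)$ to a tempered distribution $f$, and identify this $f$ as the required preimage with $Vf = F$. The remaining steps reduce to explicit manipulations of Fourier transforms and Gaussian-type weights already available.
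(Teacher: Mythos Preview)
Your proof is correct and reaches the same conclusions as the paper, but the route for the two invariance claims is genuinely different.

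For completeness, you follow the paper's strategy (Cauchy in $\widehat{\mathcal{H}}_{p,q}$, then use boundedness of $V\circ\widetilde V$ from Lemma~\ref{v-vtilde}), adding the observation $TF=F$ and flagging the issue that $\widetilde V$ alone is a priori only defined on band-limited $F$. This concern is legitimate and the paper glosses over it, simply writing ``$f=\widetilde V F\in M^{p,q}(\He^n)$'' and then estimating $\|V(f_k-f)\|=\|V\widetilde V(F_k-F)\|\le C\|F_k-F\|$; your band-limited approximation is the natural way to make this rigorous.

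For invariance under $\Pi(\zeta',s')$ and under $U$ you work on the Fock side: you pass through $B_\lambda$ and prove the pointwise identity $|\rho_\lambda(\zeta')F(\zeta)|^2 w_\lambda(\zeta)=|F(\zeta-\zeta')|^2 w_\lambda(\zeta-\zeta')$ (respectively $w_\lambda(-i\zeta)=w_\lambda(\zeta)$), then use translation/rotation invariance of Lebesgue measure. The paper instead stays on the $L^2(\R^{2n})$ side, using the composition law \eqref{Pi_lambda composition formula intro} to get $\Pi_\lambda(-\zeta',-s')\Pi_\lambda(\zeta,s)=(\text{unimodular})\cdot\Pi_\lambda(\zeta-\zeta',s-s')$, and for $U$ uses the intertwining $U_\lambda\Pi_\lambda(\xi,\eta)U_\lambda^\ast=\Pi_\lambda(-\eta,\xi)$ together with $U_\lambda p_{1/2}^\lambda=p_{1/2}^\lambda$. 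Both approaches land on the same key fact $|V_\lambda(\Pi(\zeta',s')f)(\zeta)|=|V_\lambda f(\zeta-\zeta')|$ (and its rotated analogue for $U$). Your Fock-side computation is slightly more hands-on with the weight $w_\lambda$, while the paper's argument exploits that $\Pi_\lambda$ is a genuine representation of $G_n$; the latter is shorter but yours makes transparent exactly why the unimodular factor disappears.
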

\begin{proof}  Let  $ (f_k) $ be Cauchy in $ M^{p,q}(\He^n) $ so that $ F_k = Vf_k $ is  Cauchy in  $\widehat{\mathcal{H}}_{p,q} .$  Then by the completeness of $ \widehat{\mathcal{H}}_{p,q} $  there exists $ F \in \widehat{\mathcal{H}}_{p,q} $  such that 
$ F_k  \rightarrow F $ in $\widehat{\mathcal{H}}_{p,q} .$  In view of Lemma \ref{v-vtilde},  $ f = \widetilde{V}F \in M^{p,q}(\He^n) $ and hence
$$ \| V(f_k- f) \|_{\widehat{\mathcal{H}}_{p,q}} =  \| V\circ \widetilde{V} (F_k-F)\|_{\widehat{\mathcal{H}}_{p,q}} \leq C \| F_k-F\|_{\widehat{\mathcal{H}}_{p,q}}.
$$
which shows that $ f_k \rightarrow f $ in $ M^{p,q}(\He^n) $  proving the completeness. To prove the invariance of $ M^{p,q}(\He^n) $ under $ \Pi(\zeta^\prime,s^\prime),$ let $ g = \Pi(\zeta^\prime,s^\prime)f $ and consider
$$ Vg(\zeta,s) = \langle  \Pi(\zeta^\prime,s^\prime)f, \Pi(\zeta,s) Tp_{1/2} \rangle .$$
 As $ \Pi $ is defined in terms of $ \Pi_\lambda $  it is easy to check that
$$ Vg(\zeta,s) = c_n  \int_{-\infty}^\infty e^{-i\lambda s}\, e^{i\frac{1}{2} (\lambda \Re[\zeta,\bar{\zeta^\prime}]- (\coth \lambda) \Im(\zeta \cdot \bar{\zeta^\prime}))} \, \sqrt{c_\lambda}\, \langle  f^\lambda, \Pi_\lambda(\zeta-\zeta^\prime) p_{1/2}^\lambda \rangle \, d\lambda. $$
In other words, we have the equality
$$ \int_{-\infty}^\infty Vg(\zeta,s)\,e^{i\lambda s} ds = e^{i\frac{1}{2} (\lambda \Re[\zeta,\bar{\zeta^\prime}]- (\coth \lambda) \Im(\zeta \cdot \bar{\zeta^\prime}))} \, \int_{-\infty}^\infty Vf(\zeta-\zeta^\prime,s)\,e^{i\lambda s} ds$$
 from which follows the stated invariance of $ M^{p,q}(\He^n).$ To prove the invariance of $ M^{p,p}(\He^n) $ under $ U $ we make use of the relation
$$ \langle f^\lambda, \Pi_\lambda(\xi,\eta)g^\lambda \rangle = \langle f^\lambda,  U_\lambda^\ast \circ \Pi_\lambda(-\eta,\xi) \circ U_\lambda g^\lambda\rangle = \langle U_\lambda f^\lambda, \Pi_\lambda(-\eta,\xi) U_\lambda g^\lambda\rangle .$$
We also  use the fact that $ U_\lambda p_{1/2}^\lambda = p_{1/2}^\lambda $ which follows from the relation $ U \circ B_\lambda = B_\lambda \circ U_\lambda.$ As $ U $ is defined in terms of $ U_\lambda $ we have
$$ \langle T (Uf), \Pi(\zeta,s) p_{1/2} \rangle = c_n  \int_{-\infty}^\infty e^{-i\lambda s}\, \sqrt{c_\lambda}\, \langle U_\lambda f^\lambda, \Pi_\lambda(\xi,\eta) U_\lambda p_{1/2}^\lambda \rangle \, d\lambda  =\langle Tf, \Pi(-i\zeta,s) p_{1/2} \rangle.$$
This proves that $ Uf \in M^{q,p}(\He^n) $ whenever $ f \in M^{p,q}(\He^n).$
\end{proof}

\vskip 0.1truein
\begin{rem}
We can rewrite the equality $\langle T (Uf), \Pi(\zeta,s) p_{1/2} \rangle=\langle Tf, \Pi(-i\zeta,s) p_{1/2} \rangle$ obtained in the proof of Theroem \ref{Msp invariant under U} as
$$ V(Uf)(\zeta, s)=V(f)(-i\zeta, s). $$
The above equality is the analogue of the fundamental identity of the time-frequency analysis. 
\end{rem}

We have seen inclusion between twisted modulation spaces in Theorem \ref{lambda Msp prop}. In the following theorem we lift this property to the case of modulation spaces on the Heisenberg group.

\begin{thm}
Let $ 1\leq p_1\leq p_2\leq\infty$ and $ 1\leq q_1\leq q_2\leq\infty$. Then we have the inclusion
    $$ M^{p_1,q_1}(\He^{n})\subseteq M^{p_2,q_2}(\He^{n}).$$
\end{thm}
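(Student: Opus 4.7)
The natural strategy is to transfer the inclusion for twisted modulation spaces (Theorem~\ref{lambda Msp prop}(iii)) fibrewise in $\lambda$ and then integrate against $d\lambda$. The key identification is that, by \eqref{matrix-barg} together with \eqref{pi-rep}, one has
\[
|V_\lambda f(\xi,\eta)| \;=\; d_n\sqrt{c_\lambda}\,|V_{h_\lambda}^\lambda f^\lambda(\xi,\eta)|,\qquad h_\lambda := p_{1/2}^\lambda,
\]
so that $\|V_\lambda f\|_{p,q} = d_n\sqrt{c_\lambda}\,\|V_{h_\lambda}^\lambda f^\lambda\|_{p,q}$ for every mixed exponent $(p,q)$. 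In particular, for a.e.~$\lambda\neq 0$, $f^\lambda$ belongs to $M_\lambda^{p_1,q_1}(\R^{2n})$ with window $h_\lambda$.

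Applying the pointwise convolution estimate \eqref{STFT pointise estimate} and Young's inequality for mixed-norm spaces, exactly as in the proof of Theorem~\ref{lambda Msp prop}(iii), with window $g=h_\lambda$ and using $\langle h_\lambda,h_\lambda\rangle = d_n c_\lambda$ (established in the proof of Lemma~\ref{v-vtilde}), I would obtain for each $\lambda\neq 0$
\[
\|V_\lambda f\|_{p_2,q_2} \;\leq\; C(\lambda)\,\|V_\lambda f\|_{p_1,q_1},\qquad C(\lambda) \;=\; \frac{c_n}{d_n}\,c_\lambda\,\|V_{h_\lambda}^\lambda h_\lambda\|_{r,s},
\]
where $1/r = 1+1/p_2-1/p_1$ and $1/s = 1+1/q_2-1/q_1$. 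Squaring and integrating in $\lambda$ then yields
\[
\|f\|_{(p_2,q_2)}^2 \;\leq\; \Bigl(\sup_{\lambda\neq 0} C(\lambda)\Bigr)^{\!2}\,\|f\|_{(p_1,q_1)}^2,
\]
so the entire inclusion reduces to verifying $\sup_\lambda C(\lambda) < \infty$. For this I would use the explicit formula
\[
|V_{h_\lambda}^\lambda h_\lambda(\xi,\eta)| \;=\; \frac{c_\lambda}{d_n}\,\exp\!\Bigl(-\tfrac{\lambda\coth\lambda}{4}(|\xi|^2+|\eta|^2)+\tfrac{\lambda}{2}[\xi,\eta]\Bigr)
\]
extracted from \eqref{K lambda explicit expression}. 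Completing the square in $\xi$ and using the identity $\coth\lambda-\tanh\lambda = 2/\sinh(2\lambda)$ reduces the $L^{r,s}$ norm to a product of two elementary Gaussian integrals, giving a closed form for $C(\lambda)$ in terms of $c_\lambda$, $\lambda\coth\lambda$ and $\sinh(2\lambda)/\lambda$.

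The main obstacle is the asymptotic control of $C(\lambda)$ as $|\lambda|\to\infty$. In the generic case $q_1<q_2$ (so $s>1$), the Gaussian integration in $\eta$ contributes a factor $\bigl(\sinh(2\lambda)/\lambda\bigr)^{n/s}$ whose exponential growth is dominated by the exponential decay of $c_\lambda^2$ (recall $c_\lambda\sim|\lambda|^n e^{-n|\lambda|}$), and $C(\lambda)$ is uniformly bounded, even tending to zero at infinity. The delicate boundary case is $q_1=q_2$ with $p_1<p_2$ (so $s=1$, $r>1$), where the two exponentials cancel and only a polynomial factor $|\lambda|^{n(1-1/r)}$ survives; here one must either handle $p_1=p_2$ as the trivial equality, or route the inclusion through an intermediate pair (exploiting the already-established continuity at the level $(2,2)$, i.e.\ $M^{2,2}(\He^n)=L^2(\He^n)$) and use the pointwise decay from Proposition~\ref{point-barg} on the Schwartz dense subclass to absorb the polynomial loss before extending by continuity.
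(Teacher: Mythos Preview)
Your approach is exactly the paper's: reduce to the fibrewise estimate \eqref{lambda inclusion estimate} with window $g=p_{1/2}^\lambda$, compute $\|K_\lambda\|_{r,s}$ from the explicit Gaussian \eqref{K lambda explicit expression}, and check that $c_\lambda\|V_g^\lambda g\|_{r,s}$ is bounded in~$\lambda$. The paper arrives at the closed form
\[
\|K_\lambda\|_{r,s}\;=\;c_{n,r,s}\,(\lambda/\sinh\lambda)^{2n(1-1/s)}\,(\lambda\coth\lambda)^{-n/r}\,(\lambda\coth\lambda)^{n/s},
\]
and concludes boundedness ``noting that $1-1/s>0$''. You have in fact spotted something the paper glosses over: when $q_1=q_2$ one has $s=1$, the exponential damping $(\lambda/\sinh\lambda)^{2n(1-1/s)}$ disappears, and the remaining factor is $(\lambda\coth\lambda)^{n(1-1/r)}$, which is unbounded as soon as $p_1<p_2$. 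So the difficulty you flag is real, and the paper's argument is incomplete at precisely this boundary case.

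Where your proposal falls short is in the suggested repairs. Routing through an intermediate exponent pair cannot help: to reach $(p_2,q)$ from $(p_1,q)$ via some $(p,q')$ you would need $q'\geq q$ for the first step and $q'\leq q$ for the second, forcing $q'=q$ and returning you to the same obstruction. The density idea fails for a different reason: Proposition~\ref{point-barg} gives decay of $B_\lambda f^\lambda(\zeta)\sqrt{w_\lambda(\zeta)}$ for each fixed Schwartz~$f$, but the constants depend on Schwartz seminorms of~$f$, not on $\|f\|_{(p_1,q_1)}$; absorbing a $|\lambda|^{n(1-1/r)}$ loss this way produces no inequality of the form $\|f\|_{(p_2,q_2)}\leq C\|f\|_{(p_1,q_1)}$ that could be extended by continuity. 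If you want to close the gap you need a genuinely different estimate in the case $q_1=q_2$ --- for instance, one that does not pass through the convolution bound \eqref{STFT pointise estimate} with the full kernel $K_\lambda$, or one that exploits additional structure of $V_\lambda f$ (it lies in the range of $V_\lambda$, not in all of $L^{p_1,q_1}$) --- rather than a formal rerouting.
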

\begin{proof}
Let $ f\in M^{p_1,q_1}(\He^{n})$  so that $ V_\lambda f\in L^{p_1,q_1}(\R^{2n}\times \R^{2n})$.  Since we have the relation  $ |V_\lambda f(\zeta)|= \sqrt{c_\lambda}\,|V_{p_{1/2}^\lambda}^\lambda f^\lambda(\zeta)|$, by using \eqref{STFT pointise estimate} with $ g=p_{1/2}^\lambda$ we get
$$ \|V_g^\lambda f^\lambda\|_{p_2,q_2}\leq c_n\,c_\lambda\, \|V_g^\lambda f^\lambda\|_{p_1,q_1}\|V_g^\lambda g\|_{r,s}, $$
where $ r,s$ are as in the proof of Theorem \ref{lambda Msp prop}.  Once we show that $ c_\lambda\,  \|V_g^\lambda g\|_{r,s} $ is a bounded function of $ \lambda$ for any $ 1\leq r,s\leq \infty$, the desired   inclusion follows from the above inequality by taking the $ L^2 $ norm in the $ \lambda $ variable. 
Now recall that we have already computed the function  $c_\lambda\,|V_g^\lambda g(\xi, \eta)| = c_\lambda\,|\langle p_{1/2}^\lambda, \Pi_\lambda(\xi, \eta)p_{1/2}^\lambda\rangle|  $ in \eqref{K lambda explicit expression}. It is explicitly given by
$$ K_\lambda(\xi,\eta) =  c_\lambda^2 \,  \,e^{-\frac{1}{4}\lambda (\coth \lambda)(|\xi|^2+|\eta|^2)}\, e^{\frac{1}{2}\lambda [\xi,\eta]}.
$$
Writing $ \xi= (x,u)$ and $ \eta= (y,v)$, the expression for $ K_\lambda$  takes the form
$$  K_\lambda(\xi,\eta)  = c_\lambda^2 \, e^{-\frac{1}{4}\lambda(\coth\lambda)(x^2+u^2+y^2+v^2)} e^{\frac{1}{2}\lambda(u\cdot y-v\cdot x)}.
$$ 
An easy calculation evaluating the Fourier transform of a Gaussian shows that 
$$ \int_{\R^n} e^{-\frac{1}{4} r\lambda(\coth\lambda)x^2}e^{-\frac{1}{2} r\lambda \,v\cdot x}\, dx= c_{n,r}\, (\lambda^{-1}\tanh\lambda)^{n/2}\, e^{\frac{1}{4}r\lambda (\tanh\lambda)v^2}.
$$
With a similar calculation for the $L^r $ norm in the $ u $ variable we obtain
$$ \left(\int_{\R^{2n}}\, K_\lambda(\xi,\eta)^r \, dx\,du\right)^{1/r} = c_{n,r}\, \, ( \lambda^{-1} \tanh\lambda)^{n/r}e^{-\frac{\lambda}{4}(\coth\lambda-\tanh\lambda)(y^2+v^2)}.
$$
Now by calculating the $ L^s$ norm of the above in the $ \eta=(y,v)$ variable, we get
$$ \|K_\lambda\|_{r,s}= c_{n,r,s}\, \lambda^{-n/s}\,  \,c_\lambda^2\,( \lambda^{-1} \,\tanh\lambda)^{n/r} (\coth\lambda-\tanh\lambda)^{-n/s}.
$$
The right hand side in the above expression simplifies to a constant multiple of
$$ c_{n,r,s}\,  (\lambda/\sinh\lambda)^{2n(1-1/s)}\,(\lambda /\tanh\lambda)^{-n/r}(\lambda\coth\lambda)^{n/s}. $$
Since $ \tanh \lambda $ and $ \coth \lambda $ tend to 1 as  $ \lambda \rightarrow \infty, $ noting that $ 1-1/s >0 ,$ the exponential growth of $ \sinh \lambda $ allows us to conclude that the above is bounded.
\end{proof}
\vskip0.1truein

In the next theorem we  identify the dual space of $ M^{p,q}(\He^n).$ The result is again easy consequence of the corresponding result  for the twisted modulation spaces.

\vskip0.1truein

\begin{thm} For any $ 1 \leq p, q < \infty,$ the dual of $ M^{p,q}(\He^n)$ is $ M^{p^\prime,q^\prime}(\He^n).$
\end{thm}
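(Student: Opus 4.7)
My plan is to identify $M^{p',q'}(\He^n)$ with the dual of $M^{p,q}(\He^n)$ via the pairing
$$\Lambda_g(f) = \int_{-\infty}^\infty \int_{\R^{4n}} V_\lambda f(\zeta)\, \overline{V_\lambda g(\zeta)}\, d\zeta\, d\lambda, \qquad f \in M^{p,q}(\He^n),\, g \in M^{p',q'}(\He^n).$$
The three ingredients that drive the argument are the factorisation $\widetilde V \circ V = \mathrm{id}$ on $M^{p,q}(\He^n)$ from Lemma~\ref{v-vtilde}, the self-adjointness of $T = V \circ \widetilde V$ from Lemma~\ref{v-vtilde-adjoint}, and the Benedek--Panzone duality $(L^{p,q,2})^* = L^{p',q',2}$ for mixed-norm spaces (valid because $p, q < \infty$ and $2 < \infty$).

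First I would show that $g \mapsto \Lambda_g$ is a well-defined bounded injection of $M^{p',q'}(\He^n)$ into $(M^{p,q}(\He^n))^*$. Applying Hölder's inequality on $L^{p,q}(\R^{2n}\times\R^{2n})$ in the $(\xi,\eta)$ variable and Cauchy--Schwarz in $\lambda$ yields
$$|\Lambda_g(f)| \le \int_{-\infty}^\infty \|V_\lambda f\|_{p,q}\, \|V_\lambda g\|_{p',q'}\, d\lambda \le \|f\|_{(p,q)}\, \|g\|_{(p',q')}.$$
Hence $\Lambda_g \in (M^{p,q}(\He^n))^*$ with norm at most $\|g\|_{(p',q')}$. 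For injectivity, suppose $\Lambda_g = 0$. Given any $F \in \widehat{\mathcal{H}}_{p,q}$ set $f = \widetilde V F \in M^{p,q}(\He^n)$, so $Vf = TF$. Then by Lemma~\ref{v-vtilde-adjoint} and the identity $T(Vg) = Vg$ (which follows from $\widetilde V V = \mathrm{id}$ applied to $g$),
$$0 = \Lambda_g(\widetilde V F) = \int\int (TF)^\lambda\, \overline{V_\lambda g}\, d\zeta\, d\lambda = \int\int F^\lambda\, \overline{V_\lambda g}\, d\zeta\, d\lambda.$$
Since this vanishes for every $F \in \widehat{\mathcal{H}}_{p,q}$ and $\widehat{\mathcal{H}}_{p',q'}$ sits isometrically inside $(\widehat{\mathcal{H}}_{p,q})^*$ under this pairing, $V_\lambda g = 0$ in $\widehat{\mathcal{H}}_{p',q'}$; the inversion $g = \widetilde V(Vg)$ then gives $g = 0$.

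For surjectivity, given $\ell \in (M^{p,q}(\He^n))^*$, the operator $\widetilde V \colon \widehat{\mathcal{H}}_{p,q} \to M^{p,q}(\He^n)$ is bounded because $\|\widetilde V F\|_{(p,q)} = \|V\widetilde V F\|_{\widehat{\mathcal{H}}_{p,q}} = \|TF\|_{\widehat{\mathcal{H}}_{p,q}} \le C\|F\|_{\widehat{\mathcal{H}}_{p,q}}$ by Lemma~\ref{v-vtilde}. Hence $\tilde\ell(F) := \ell(\widetilde V F)$ defines a bounded linear functional on $\widehat{\mathcal{H}}_{p,q}$. By the mixed-norm duality applied to the Fourier image, there is $G \in \widehat{\mathcal{H}}_{p',q'}$ with
$$\tilde \ell(F) = \int_{-\infty}^\infty \int_{\R^{4n}} F^\lambda(\zeta)\, \overline{G^\lambda(\zeta)}\, d\zeta\, d\lambda, \qquad F \in \widehat{\mathcal{H}}_{p,q}.$$
Define $g = \widetilde V G$; since $V g = T G$ and the proof of Lemma~\ref{v-vtilde} shows $T$ is bounded on $\widehat{\mathcal{H}}_{p',q'}$ with the same kernel estimate $K_\lambda \in L^1$, we obtain $g \in M^{p',q'}(\He^n)$ with $\|g\|_{(p',q')} \le C\|\ell\|$. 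Finally, for any $f \in M^{p,q}(\He^n)$, apply $f = \widetilde V(Vf)$, the identity $T(Vf) = Vf$, and the self-adjointness of $T$:
$$\ell(f) = \tilde\ell(Vf) = \int\int V_\lambda f\, \overline{G^\lambda}\, d\zeta\, d\lambda = \int\int V_\lambda f\, \overline{(TG)^\lambda}\, d\zeta\, d\lambda = \int\int V_\lambda f\, \overline{V_\lambda g}\, d\zeta\, d\lambda = \Lambda_g(f).$$

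The main obstacle is not any one computation but the conceptual step in the surjectivity argument: a generic representing element $G \in \widehat{\mathcal{H}}_{p',q'}$ of $\tilde \ell$ need not lie in the range of $V$, so we cannot directly read off a $g$. The mechanism that fixes this is precisely the interplay between $\widetilde V V = \mathrm{id}$ (which makes $T$ a projection onto $V(M^{p,q}(\He^n))$) and the self-adjointness of $T$ from Lemma~\ref{v-vtilde-adjoint}, which lets one replace $G$ by $TG = Vg$ at no cost inside the pairing against $Vf$. Verifying that the various iterated integrals are finite (by Hölder plus Cauchy--Schwarz) and that $T$ is bounded on the dual side are the routine but necessary supporting steps.
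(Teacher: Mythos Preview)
Your proof is correct and follows essentially the same strategy as the paper: pair via $\int\int V_\lambda f\,\overline{V_\lambda g}\,d\zeta\,d\lambda$, use H\"older plus Cauchy--Schwarz for the easy inclusion, and for surjectivity pull a functional back through $\widetilde V$, invoke mixed-norm duality $\widehat{\mathcal H}_{p,q}^\ast=\widehat{\mathcal H}_{p',q'}$ to get $G$, set $g=\widetilde V G$, and apply the self-adjointness of $T=V\circ\widetilde V$ together with $T(Vf)=Vf$ to identify the functional with $\Lambda_g$. The only real difference is that the paper restricts $\Lambda\circ\widetilde V$ to the image $W_{p,q}$ and then invokes Hahn--Banach, whereas you observe directly that $\widetilde V:\widehat{\mathcal H}_{p,q}\to M^{p,q}(\He^n)$ is bounded (since $\|\widetilde V F\|_{(p,q)}=\|TF\|_{\widehat{\mathcal H}_{p,q}}$), making the extension step unnecessary; you also add an explicit injectivity check that the paper omits.
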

\begin{proof} In proving this theorem we make use of the fact that $ M_\lambda^{p,q}(\R^{2n})^\ast = M_\lambda^{p^\prime,q^\prime}(\R^{2n}).$ Recall that the duality between $ M_\lambda^{p,q}(\R^{2n})$ and $M_\lambda^{p^\prime,q^\prime}(\R^{2n})$ is given by
$$ \langle \varphi, \psi \rangle_\lambda =  c_\lambda\,  \int_{\R^{2n}}\int_{\R^{2n}} \langle \varphi, \Pi_\lambda(\zeta)p_{1/2}^\lambda \rangle \, \overline{\langle \psi, \Pi_\lambda(\zeta)p_{1/2}^\lambda \rangle}\, d\xi\, d\eta $$
where the integral on the right hand side is the duality between the corresponding mixed norm spaces. Given $ f \in M^{p,q}(\He^n)$ and $ g \in M^{p^\prime,q^\prime}(\He^n) ,$ we define  the duality bracket
$$ \langle f, g\rangle_0 = \int_{-\infty}^\infty  \langle f^\lambda, g^\lambda \rangle_\lambda\, d\lambda.$$
By H\"older and Cauchy-Schwarz we  get the estimate $ |\langle f, g\rangle_0 | \leq \| f\|_{(p,q)}\, \|g \|_{(p^\prime,q^\prime)}.$ Thus for any $ g \in M^{p^\prime,q^\prime}(\He^n) $ the functional $ \Lambda_g(f) =\langle f, g\rangle_0$  belongs to $ M^{p,q}(\He^n)^\ast.$   This proves the inclusion $ M^{p^\prime,q^\prime}(\He^n) \subset M^{p,q}(\He^n)^\ast. $ In order to prove the theorem we need to show that every member of $ M^{p,q}(\He^n)^\ast $ is of the form for some $ g \in M^{p^\prime,q^\prime}(\He^n).$

We start with the following observation.   For any function $ F $ on $ \C^{2n}\times \R ,$ let $ \mathcal{F}_c F $ stand for its Fourier transform in the last variable. Then as we have noted in Lemma \ref{Hpq}, $ \mathcal{F}_c^\ast : \widehat{\mathcal{H}}_{p,q}  \rightarrow \mathcal{H}_{p,q} $ is an isometric isomorphism. Hence $ \Lambda \rightarrow  \Lambda \circ \mathcal{F}_c^\ast $ sets up a one to one correspondence between the duals of  $\mathcal{H}_{p,q} $ and 
$\widehat{\mathcal{H}}_{p,q}.$ Since  $ \mathcal{H}_{p,q} $ is a mixed norm space, we have $ \mathcal{H}_{p,q}^\ast = \mathcal{H}_{p^\prime,q^\prime} $  and hence  $\widehat{\mathcal{H}}_{p,q}^\ast =\widehat{\mathcal{H}}_{p^\prime,q^\prime}.$  If we let $ W_{p,q} $ stand for the image of $ M^{p,q}(\He^n)$ under $ V,$ then it is a closed subspace of $\widehat{\mathcal{H}}_{p,q}.$ Given a bounded linear functional $ \Lambda $ on $ M^{p,q}(\He^n),$  the functional $ \Lambda \circ \widetilde{V} $ belongs to  $ W_{p,q}^\ast $  which by Hahn-Banach theorem gives rise to an element $ \widetilde{\Lambda} \in \widehat{\mathcal{H}}_{p,q}^\ast.$  Hence there exists $ G \in  \widehat{\mathcal{H}}_{p^\prime,q^\prime} $ such that 
 \begin{equation}\label{lambdatilde} \widetilde{\Lambda}(F) = \int_{-\infty}^\infty  \left(\int_{\C^{2n}}   F^\lambda(\zeta)\, \overline{G^\lambda(z)}\, d\zeta \right)\, d\lambda 
 \end{equation}
 for any $ F \in \widehat{\mathcal{H}}_{p,q}.$ By defining  $  g = \widetilde{V}G $ we note that $ g \in M^{p^\prime,q^\prime}(\He^n) $  and $ (V \circ \widetilde{V}) G = Vg.$ For any  $ f \in M^{p,q}(\He^n) $ the function    $ F = Vf $ satisfies $ (V \circ \widetilde{V})F = F .$  Therefore, by taking $ F = Vf $ in \eqref{lambdatilde} and appealing to Lemma \ref{v-vtilde-adjoint} we obtain
$$  \widetilde{\Lambda}(Vf) = \int_{-\infty}^\infty  \langle V_\lambda f, V_\lambda g \rangle\, d\lambda.$$
Since $\Lambda(f) =  \widetilde{\Lambda}(Vf) ,$ recalling the definition of the duality bracket, we obtain 
$$ \Lambda(f) =   \int_{-\infty}^\infty  \langle f^\lambda, g^\lambda \rangle_\lambda\, d\lambda =  \langle f, g\rangle_0.$$
This proves the reverse inclusion $  M^{p,q}(\He^n)^\ast \subset  M^{p^\prime,q^\prime}(\He^n)$ and hence the theorem follows.
\end{proof}

\section*{Acknowledgments}
This work began in the summer of 2024 when both authors were visiting Harish-Chandra Research Institute. They wish to thank the institute for the hospitality and the financial support provided. Biswas is thankful to the National Board for Higher Mathematics (NBHM) under the  Department of Atomic Energy (DAE), Govt. of India for the postdoctoral fellowship with reference number 0204/27/(14)/2023/R\&D-II/11891. The second author is also supported by INSA.

\providecommand{\bysame}{\leavevmode\hbox to3em{\hrulefill}\thinspace}
\providecommand{\MR}{\relax\ifhmode\unskip\space\fi MR }
\providecommand{\MRhref}[2]{%
  \href{http://www.ams.org/mathscinet-getitem?mr=#1}{#2}
}
\providecommand{\href}[2]{#2}

\end{document}